\theoremstyle{plain}
\newtheorem{thrm}{Theorem}[section]
\newtheorem{lemma}[thrm]{Lemma}
\newtheorem{prop}[thrm]{Proposition}
\newtheorem{cor}[thrm]{Corollary}
\begin{document}
	% begin top matter
	% ***************** macroes needed for this paper ************************
	\newcommand{\sn}{\mathbb{S}^{n-1}}
	\newcommand{\SL}{\mathcal L^{1,p}( D)}
	\newcommand{\Lp}{L^p( Dega)}
	\newcommand{\CO}{C^\infty_0( \Omega)}
	\newcommand{\Rn}{\mathbb R^n}
	\newcommand{\Rm}{\mathbb R^m}
	\newcommand{\R}{\mathbb R}
	\newcommand{\Om}{\Omega}
	\newcommand{\Hn}{\mathbb H^n}
	\newcommand{\A}{\alpha }
	\newcommand{\B}{\beta}
	\newcommand{\eps}{\ve}
	\newcommand{\BVX}{BV_X(\Omega)}
	\newcommand{\p}{\partial}
	\newcommand{\IO}{\int_\Omega}
	\newcommand{\bG}{\boldsymbol{G}}
	\newcommand{\bg}{\mathfrak g}
	\newcommand{\bz}{\mathfrak z}
	\newcommand{\bv}{\mathfrak v}
	\newcommand{\Bux}{\mbox{Box}}
	\newcommand{\e}{\ve}
	\newcommand{\X}{\mathcal X}
	\newcommand{\Y}{\mathcal Y}
	\newcommand{\Z}{\mathcal Z}
	\newcommand{\I}{\mathcal I}
	\newcommand{\la}{\lambda}
	\newcommand{\vf}{\varphi}
	\newcommand{\rhh}{|\nabla_H \rho|}
	\newcommand{\Ba}{\mathcal{B}_\beta}
	\newcommand{\Za}{Z_\beta}
	\newcommand{\ra}{\rho_\beta}
	\newcommand{\n}{\nabla}
	\newcommand{\vt}{\vartheta}
	\newcommand{\its}{\int_{\{y=0\}}}
	\newcommand{\py}{\partial_y^a}
\newcommand{\sa}{\langle}
\newcommand{\da}{\rangle}
\newcommand{\mi}{\mathscr I}
\newcommand{\F}{\mathscr F}
	
	\numberwithin{equation}{section}

	\newcommand{\RN} {\mathbb{R}^N}
	\newcommand{\Sob}{S^{1,p}(\Omega)}
	\newcommand{\Dxk}{\frac{\partial}{\partial x_k}}
	\newcommand{\Co}{C^\infty_0(\Omega)}
	\newcommand{\Je}{J_\ve}
	\newcommand{\beq}{\begin{equation}}
		\newcommand{\bea}[1]{\begin{array}{#1} }
			\newcommand{\eeq}{ \end{equation}}
		\newcommand{\ea}{ \end{array}}
	\newcommand{\eh}{\ve h}
	\newcommand{\Dxi}{\frac{\partial}{\partial x_{i}}}
	\newcommand{\Dyi}{\frac{\partial}{\partial y_{i}}}
	\newcommand{\Dt}{\frac{\partial}{\partial t}}
	\newcommand{\ds}{\displaystyle}
	\newcommand{\Zt}{{\mathcal Z}^{t}}
	\newcommand{\ve}{\varepsilon}
	\newcommand{\D}{\operatorname{div}}
	\newcommand{\G}{\mathscr{G}}
	\newcommand{\w}{\tilde{w}}
	\newcommand{\s}{\sigma}

\title[Sharp order of vanishing, etc.]{Sharp order of vanishing for parabolic equations, nodal set estimates and   Landis type results}

\dedicatory{This work is dedicated to the memory of our dear friend and colleague Luis Escauriaza}	
	
\author{Vedansh Arya}
\address{Department of Mathematics and Statistics, University of Jyväskylä, Finland}\email[Vedansh Arya]{vedansh.v.arya@jyu.fi}
	
\author{Agnid Banerjee}
\address{School of Mathematical and Statistical Sciences\\ Arizona State University \\ Tempe, AZ 85287-1804\\USA}\email[Agnid Banerjee]{agnid.banerjee@asu.edu}

\author{Nicola Garofalo}
\address{Department of Civil, Environmental and Architectural Engineering\\University of Padova \\ Italy}\email[Nicola Garofalo]{nicola.garofalo@unipd.it}

\subjclass{35A02, 35B60, 35K05}

\keywords{Sharp order of vanishing for parabolic equations. Nodal set estimates. Landis type results}

\medskip

\thanks{N. Garofalo is supported in part by a Progetto SID (Investimento Strategico di Dipartimento): ``Aspects of nonlocal operators via fine properties of heat kernels", University of Padova (2022); and by a PRIN (Progetto di Ricerca di Rilevante Interesse Nazionale) (2022): ``Variational and analytical aspects of geometric PDEs". He is also partially supported by a Visiting Professorship at the Arizona State University}

\maketitle
% end top matter
	
\selectlanguage{english}

\begin{abstract}
We establish a new sharp estimate of the order of vanishing of solutions to parabolic equations with variable coefficients. For real-analytic leading coefficients, we prove a localised estimate of the nodal set, at a given time-level, that generalises the celebrated one of Donnelly and Fefferman. We also establish Landis type results for global solutions.
\end{abstract}
	
\tableofcontents
	
\section{Introduction}
	
The subject of unique continuation for second-order partial differential equations occupies a central position in analysis and geometry. More in  particular, quantitative uniqueness for elliptic equations has undergone a great development during the past three decades, especially in connection with the following conjecture of Yau in \cite{Yau}: let $M$ be a $n$-dimensional, $C^\infty$ Riemannian manifold compact and without boundary, and consider an eigenfunction $\Delta \vf = - \la \vf$ on $M$. Then the $(n-1)-$dimensional measure of the nodal set of $\vf$ satisfies the estimate: 
\begin{equation}\label{yau}
c\ \sqrt \la \le H^{n-1}(\{x\in M\mid \vf(x) = 0\}) \le C \sqrt \la,
\end{equation}
 where $C, c>0$ are two constants depending only on $n$, the Riemannian metric, but not the eigenvalue $\la$. This order of vanishing is sharp since, given the harmonic function $P_\kappa(x)= \Re (x_1 + i x_2)^\kappa$ in $\R^{n+1}$, then $\vf_\kappa = {P_\kappa}\big|_{\mathbb S^{n}}$ satisfies the well-known equation $\Delta_{\mathbb S^{n}} \vf_\kappa = - \lambda_\kappa \vf_\kappa$, with $\lambda_\kappa = \kappa(\kappa+n-1)$, and the order of vanishing of $\vf_\kappa$ at the North pole $(0,...,0,1)$ is precisely $\kappa \approx \sqrt {\lambda_\kappa}$. 
In their famous papers \cite{DF1}, \cite{DF2}, Donnelly and Fefferman showed that if $u$ solves $\Delta u = - \la u$ in a $C^\infty$, compact and connected $n$-dimensional Riemannian manifold $M$, then  the maximal vanishing order of $u$ is $\approx \sqrt{\lambda}$. Using their estimate, the same authors proved that, if the Riemannian metric is real-analytic, then \eqref{yau} does hold, thus giving a complete  answer to Yau's conjecture in the analytic category. More recently, thanks to the breakthrough  works of Logunov and Malinnikova in \cite{L1}, \cite{L2} and \cite{LM}, there have been some important developments in the $C^\infty$ setting as well. The reader should also see \cite{CLMM} for a sharp local Courant nodal domain estimate. For a beautiful account on quantitative unique continuation, Yau's conjecture and recent developments sparked by it, we refer the reader to \cite{LMqucp} and \cite{LMyau}.

In this paper, we establish a \emph{space-like} quantitative uniqueness result for nontrivial solutions to parabolic equations in $\Rn\times [0,\infty)$ in the form\footnote{We alert the reader that, in order to simplify the presentation, we have chosen to work with a backward parabolic equation in a forward time-slab. Consequently, all results - including the existing ones from other authors -  will be stated in accordance with this choice. Using the change of variable $t\to - t$, the reader can readily convert all the statements into ones for a forward parabolic equation 
\[
u_t - \D(A(x,t)\n u) +\langle b(x,t), \nabla u\rangle + V(x,t)u=0,
\]
in a backward time-slab $\Rn\times (-\infty,0]$.}
\begin{align}\label{meq}
Lu = u_t+\D(A(x,t)\n u) +\langle b(x,t), \nabla u\rangle + V(x,t)u=0,
\end{align}
under minimal regularity hypothesis on the coefficients $A(x,t) = [a_{ij}(x,t)]$, $b(x,t) = (b_1(x,t),...,b_n(x,t)$ and $V(x,t)$.
The case  $b(x,t) \equiv 0$ in \eqref{meq} was recently settled by two of us in \cite{AB}, but their study left open the treatment of the drift. One of the main new contributions of the present work is to fill this gap. This requires a delicate analysis which complements and completes the available theory. The relevant assumptions on the symmetric, matrix-valued function  $(x,t)\to A(x,t) \in M_{n\times n}(\R)$, the drift  $b(x,t)$ and the potential $V(x,t)$ are as follows:
\begin{itemize}
\item[(i)] There exist $\Lambda \geq 1$  such that for all $x, y\in \Rn$, and $s, t\in [0,\infty)$, one has for every $\xi\in \Rn$
\begin{equation}\label{ell}
\Lambda^{-1} |\xi|^2 \leq \sa A(x,t)\xi,\xi\da\leq \Lambda |\xi|^2.
\end{equation}
\item[(ii)] There exist $M\geq 0$ such that for all $x, y\in \Rn$, and $s, t\in [0,\infty)$, one has
\begin{equation}\label{ass}
|a_{ij}(x,t) - a_{ij}(y,s) |\leq M(|x-y| + |t-s|^{1/2}).
\end{equation}
\end{itemize}
In what follows we indicate by $B_r(x)$ the ball of radius $r$ with centre at $x \in \R^n$, $Q_r(x,t)$ will denote the space-time  cylinder $B_r(x) \times [t,t+r^2].$ When $(x,t) = (0,0)$, we simply write $B_r$ and $Q_r$. With $b(x,t)$ and $V(x,t)$ as in \eqref{ass} above, we define 
$$||V||_{1,1/2} = ||V||_{L^{\infty}(Q_4)} +||\n_x V||_{L^{\infty}(Q_4)} +\underset{(x,t), (x,s) \in Q_4}{\sup} \frac{|V(x,t)-V(x,s)|}{|t-s|^{1/2}}$$ and 
$$||b||_{1,1/2} = \sup_{i=1,...,n}\left(||b_i||_{L^{\infty}(Q_4)} +||\n_x b_i||_{L^{\infty}(Q_4)} + \underset{(x,t), (x,s) \in Q_4}{\sup} \frac{|b_i(x,t)-b_i(x,s)|}{|t-s|^{1/2}}\right).
$$
Throughout this paper, we assume that $||b||_{1,1/2}<\infty$, and $||V||_{1,1/2}<\infty$. With these quantities in place, we introduce a normalisation quantity which plays a pervasive role in what follows:
\begin{equation}\label{T}
\Theta =	\frac{\int_{Q_{4}} u^2( x, t)dxdt}{\int_{B_{1}}u^2(x,0)dx}.
\end{equation}  		
In the results in this paper it will always be assumed that the relevant solution does not vanish identically at time $t = 0$. Consequently, the quantity \eqref{T} will be well-defined. The following is our main result on quantitative unique continuation.	

\begin{thrm}[Space-like vanishing order]\label{main}
Let $u$ be a solution to \eqref{meq} in $Q_4$ such that $u(\cdot, 0) \not \equiv  0$ in $B_1$. Then there exists a universal constant $N>0$ such that, with
\begin{equation}\label{K}
\mathcal{K}=\frac{1}{\int_{B_1} u^2(x,0)dx}+ N\operatorname{log}(N\Theta)+N(||V||_{1,1/2}^{1/2}+||b||_{1,1/2}+1),
\end{equation} 
one has for all $r \le 1/2$,  
\begin{align}\label{df}
\int_{B_r} u^2(x,0)dx \geq   r^{\mathcal{K}}.
\end{align}
\end{thrm}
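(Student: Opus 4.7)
My plan is to deduce Theorem~\ref{main} from an $L^2$-doubling inequality at $t=0$, itself obtained from the monotonicity of a generalized parabolic frequency function. By a parabolic change of variables at the origin I first reduce to $A(0,0)=I$, preserving \eqref{ell}--\eqref{ass} up to universal constants. I then work with a Poon--Escauriaza--Fern\'andez type frequency
$$
N(r) = \frac{r^2 D(r)}{H(r)}, \qquad H(r) = \int_{\Rn} u^2(x, r^2)\,\G(x, r^2)\, dx, \qquad D(r) = \int_{\Rn} \sa A\n u, \n u\da(x, r^2)\,\G(x, r^2)\, dx,
$$
where $\G$ is the Gaussian-like fundamental solution adapted to $A$ constructed in \cite{AB}. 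In the drift-free case $b\equiv 0$ the monotonicity of $N$ and the subsequent vanishing-order bound were established in \cite{AB}; the novelty here is to accommodate the drift $b$ inside the same scheme.

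\textbf{Monotonicity and drift control.} Differentiating $\log N(r)$ and using $Lu=0$ produces, in addition to the standard Almgren-type identity, three families of errors: (i) leading-coefficient errors from $A-I$, bounded by $Mr$ via \eqref{ass}; (ii) potential errors contributing a term of order $\|V\|_{1,1/2}^{1/2}$ through the usual splitting $V=V^+-V^-$ and a $\G$-weighted Poincar\'e inequality; and (iii) drift errors --- the genuinely new terms --- arising as $\int \sa b,\n u\da u\,\G$ in $H'(r)$ and as $\int \sa b,\n u\da^2\G$, $\int u\sa \n_x b,\n u\da \G$, $\int b_t u^2\G$ in $D'(r)$. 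I would estimate these via Cauchy--Schwarz together with the Lipschitz-in-$x$ and $C^{1/2}$-in-$t$ regularity of $b$, integrating by parts to trade $b_t$ against $\D b$ and absorbing the remaining quantities into $H(r)$ (not $D(r)$), so as to keep the dependence on $\|b\|_{1,1/2}$ \emph{linear}. The output is $\frac{d}{dr}\log N(r)\ge -C(1+\|V\|_{1,1/2}^{1/2}+\|b\|_{1,1/2})$ up to integrable-in-$r$ errors, whence
$$
N(r)\le C\bigl(1+\log\Theta+\|V\|_{1,1/2}^{1/2}+\|b\|_{1,1/2}\bigr),\qquad 0<r\le 1/2,
$$
the $\log\Theta$ coming from initializing the monotonicity at the top scale via \eqref{T}.

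\textbf{From frequency to vanishing.} The frequency bound, combined with standard local energy estimates for $L$ comparing $H(r)$ with the unweighted spatial $L^2$-mass at $t=0$, yields a spatial doubling inequality $\int_{B_{2r}} u^2(x,0)\,dx\le C_0 \int_{B_r} u^2(x,0)\,dx$ with $C_0$ depending exponentially on the frequency bound. Iterating this inequality from $r=1/2$ down to the target scale and using the factor $1/\int_{B_1}u^2(x,0)\,dx$ in \eqref{K} to absorb the first step gives \eqref{df}. \emph{Main obstacle.} The hardest point is the sharp linear dependence of $\mathcal{K}$ on $\|b\|_{1,1/2}$: a naive Cauchy--Schwarz on the drift terms in $D'(r)$ would force a quadratic dependence. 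Avoiding this requires using $\n_x b$ and $b_t$ together to rewrite the delicate $\int\sa b,\n u\da^2\G$ contribution via an integration-by-parts identity that moves one factor of $\n u$ off the quadratic form into a lower-order expression absorbable into $H(r)$; matching, at the level of the final exponent, the same linear scaling that one would get through a gauge substitution $u=e^{\phi}v$ when $b$ happens to be a gradient.
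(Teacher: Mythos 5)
Your proposal takes a genuinely different route from the paper: you propose a Poon-type frequency function $N(r)=r^2D(r)/H(r)$ with an adapted Gaussian weight, whereas the paper proves a quantitative $L^2$ Carleman estimate with weight $\s_a^{-2\alpha}G_a$ (Theorem~\ref{carleman}), combines it with a monotonicity-in-time lemma (Lemma~\ref{mon}), and then runs the Escauriaza--Fern\'andez--Vessella machinery to get the spatial doubling inequality \eqref{dub1} at $t=0$. Unfortunately, as written your scheme has gaps at exactly the points where the difficulty of the theorem lies. First, your starting point is misattributed: \cite{AB} does \emph{not} establish monotonicity of a parabolic frequency for Lipschitz variable coefficients, nor does it construct a ``Gaussian-like fundamental solution adapted to $A$'' with the algebraic identities (analogues of \eqref{nG}, \eqref{ZG}) that frequency monotonicity requires; \cite{AB} is itself Carleman-based, and the absence of such a weight for merely Lipschitz $A$ is precisely why \cite{EFV} and its descendants avoid the frequency route. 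Asserting that differentiating $\log N(r)$ ``produces the standard Almgren-type identity plus errors bounded by $Mr$'' presupposes the object whose existence is the main obstruction.

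Second, even granting a frequency bound, your passage ``from frequency to vanishing'' conflates information at time $r^2$ with information at time $0$. The quantity $H(r)$ lives on the time slice $t=r^2$, and the step comparing it with $\int_{B_r}u^2(x,0)\,dx$ is not a ``standard local energy estimate'': it is the content of the paper's Lemma~\ref{mon}, whose proof needs Gaussian bounds for the fundamental solution, the regularity estimate of Lemma~\ref{len}, and a careful choice of the admissible time $T_{\rho,N}$ in \eqref{TrhoN} so that the error term $e^{-1/Nt}\int_{Q_3}u^2$ can be absorbed --- and it is here that part of the $\log(N\Theta)$ and the $\|b\|_\infty$, $\|V\|_\infty^{1/2}$ contributions to $\mathcal K$ actually arise. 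Third, the claimed linear dependence on $\|b\|_{1,1/2}$ is stated as an aspiration rather than proved: you correctly identify that naive Cauchy--Schwarz on $\int\sa b,\n u\da^2\G$ gives a quadratic loss, but the ``integration-by-parts identity'' that is supposed to repair this is not exhibited. In the paper the linearity comes from a specific structural choice inside the Carleman computation --- grouping the drift with the scaling vector field as $\mathcal A=\frac{1}{2t}Z\tilde w+\sa b,\n\tilde w\da$ in \eqref{A} so that the cross term $2\mathcal A\mathcal B$ produces Rellich-type integrals in which $b$ enters only through $\operatorname{div}b$ and $\n b$, linearly --- and an analogous concrete mechanism would have to be produced in the frequency framework. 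Until these three points are supplied, the argument does not close.
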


As we have mentioned above, when $b(x,t)\equiv 0$, Theorem \ref{main} was proved in \cite{AB}. We emphasise that the linear dependence on the norm of the drift in the constant $\mathcal K$ in \eqref{K} is sharp, as shown by the following example. Consider $P_\kappa(x)= \Re (x_1 + i x_2)^\kappa$ in $\R^{n}$,  $\vf_\kappa = {P_\kappa}\big|_{\mathbb S^{n-1}}$, and for $(x,y) \in \mathbb S^{n-1} \times \R$, define with $\la = \kappa(\kappa+n-2)$, 
\[
v(x, y)= e^{\sqrt{\frac{\lambda}{2}} y}\ \vf_\kappa(x).
\]
A computation gives
\begin{equation}\label{example}
\Delta_{\mathbb S^{n-1}} v + v_{yy} + \sqrt{\frac{\lambda}{2}}\ v_y = 0.
\end{equation} 
Consider the point $p_0\overset{def}=(n_0, 0) \in \mathbb S^{n-1} \times \R$ where $n_0 \in \mathbb S^{n-1}$ is the north pole. If we denote by $b = \sqrt{\frac{\lambda}{2}}$ the drift in \eqref{example}, then it is readily seen that
the vanishing order of $v$ at $p_0$ is $\kappa \approx \sqrt{\lambda}\approx \sqrt{\frac{\lambda}{2}} = ||b||_{1,1/2}$. 

\medskip

In our next result, Theorem \ref{maindf}, we use Theorem \ref{main} to obtain an estimate which should be regarded as a parabolic counterpart of the above mentioned results in \cite{DF1}, \cite{DF2}.  

\begin{thrm}[Space-like estimate of the nodal set]\label{maindf}
In addition to \eqref{ell}, assume that $A\in C^\infty(\Rn\times \R)$ and that $x\to A(x, t)$ is real-analytic, uniformly in $t$. With $\la>0$, let $u$ be a solution in $Q_4$ to the equation 
\begin{equation}\label{real1}
u_t  + \D(A(x,t) \nabla u) +  \la u =0, 
\end{equation}
such that  $u(\cdot, 0) \neq 0$. Then there exist $C=C(n, A)>0$, and a universal constant $N>0$, such that
\begin{equation}\label{df1}
H^{n-1}(\{x: u(x, 0) =0\} \cap B_{1/2}) \leq C\left(\sqrt \la +N \log(N \Theta) + \frac{1}{\int_{B_1} u(x,0)^2 dx} +1\right),
\end{equation}
where $H^{n-1}$ denotes the $(n-1)$-dimensional Hausdorff measure in $\Rn$.
\end{thrm}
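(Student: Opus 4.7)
The strategy follows the classical Donnelly--Fefferman route \cite{DF1}, \cite{DF2}: reduce the nodal set estimate to a doubling inequality for $u(\cdot,0)$, transfer to a holomorphic extension, and count zeros along complex lines via Jensen's formula combined with the Cauchy--Crofton integral-geometric formula.

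First, I would apply Theorem \ref{main} to \eqref{real1}. Since $b \equiv 0$ and $V \equiv \lambda$, we have $\|b\|_{1,1/2} = 0$ and $\|V\|_{1,1/2}^{1/2} = \sqrt{\lambda}$. By translation invariance of the argument (rerunning the proof of Theorem \ref{main} inside cylinders centred at $(x_0, 0)$ for $x_0 \in B_{1/2}$), one obtains, uniformly in $x_0 \in B_{1/2}$ and $r \le r_0$, the $L^2$ vanishing-order bound
\[
\int_{B_r(x_0)} u^2(x,0)\, dx \geq r^{C\mathcal{K}},
\]
where $\mathcal{K}$ has the form on the right-hand side of \eqref{df1}. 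A Moser iteration on \eqref{real1} upgrades this to a pointwise $L^\infty$ doubling
\[
\sup_{B_{2r}(x_0)} |u(\cdot, 0)| \leq e^{C\mathcal{K}} \sup_{B_r(x_0)} |u(\cdot, 0)|.
\]

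Next, using that $x \mapsto A(x, t)$ is real-analytic uniformly in $t$, classical interior regularity for the parabolic operator $\partial_t + \operatorname{div}(A \nabla)$ shows that $x \mapsto u(x, 0)$ is real-analytic in $B_{3/4}$ with an analyticity radius $\rho_0 = \rho_0(n, A) > 0$ that is independent of $\lambda$. Consequently $u(\cdot, 0)$ admits a holomorphic extension $U$ to the complex tube $\{z \in \mathbb{C}^n : \operatorname{dist}(z, B_{1/2}) < \rho_0\}$, and Cauchy estimates together with the doubling above propagate into the complex domain to give
\[
\sup_{|z - x_0| \leq \rho_0/2} |U(z)| \leq e^{C\mathcal{K}} \sup_{|x - x_0| \leq \rho_0/8} |u(x, 0)|, \qquad x_0 \in B_{1/2}.
\]

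For each $\omega \in \mathbb{S}^{n-1}$ and each $x_0 \in B_{1/2}$, the one-variable restriction $\zeta \mapsto U(x_0 + \zeta\omega)$ is holomorphic on a disk of radius $\rho_0/2$ and satisfies the same growth bound; Jensen's formula then shows that the number of real zeros of $s \mapsto u(x_0 + s\omega, 0)$ on any subinterval of length comparable to $\rho_0$ is at most $C\mathcal{K}$. Covering $B_{1/2}$ by $O_n(1)$ parallel segments for each direction and applying the Cauchy--Crofton formula to the rectifiable nodal set $Z = \{u(\cdot, 0) = 0\} \cap B_{1/2}$ yields \eqref{df1}. The main obstacle is keeping the analyticity radius $\rho_0$ and the Cauchy-estimate constants in the complex extension step independent of $\lambda$, so that the large-$\lambda$ regime is not contaminated by spurious powers of $\lambda$; this requires a quantitative version of analytic regularity for parabolic operators with analytic leading coefficients, in which the contribution of the zeroth-order term is absorbed into the $\sqrt{\lambda}$ already present in the exponent $\mathcal{K}$ delivered by Theorem \ref{main}.
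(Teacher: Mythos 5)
Your overall architecture is the Donnelly--Fefferman route that the paper also follows (vanishing order from Theorem \ref{main} $\Rightarrow$ holomorphic extension $\Rightarrow$ zero counting on complex lines $\Rightarrow$ integral geometry), but there is one genuine gap at the step you yourself flag as ``the main obstacle'': you never produce the quantitative holomorphic extension. Asserting that ``classical interior regularity'' gives an analyticity radius independent of $\la$, with the contribution of the zeroth-order term ``absorbed into the $\sqrt\la$'', is precisely the nontrivial point. A direct analyticity estimate for \eqref{real1} does not obviously do this: the natural rescaling $x\mapsto x/\sqrt\la$ that normalises the zeroth-order term shrinks the radius of convergence to $O(1/\sqrt\la)$, and recovering a \emph{fixed} complex radius with only an $e^{C(\sqrt\la+1)}$ loss requires an actual mechanism. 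The paper supplies it in Theorem \ref{anal}: one lifts $u$ to $v(x,y,t)=e^{\sqrt\la\, y}u(x,t)$, which solves a divergence-form parabolic equation in $n+1$ space variables with \emph{no} zeroth-order term, applies Eidelman's analyticity estimates \cite{Ei} to $v$ (whose constants are then independent of $\la$), and restricts back to $y=0$; the factor $e^{C\sqrt\la}$ in \eqref{estan1} comes solely from the exponential weight. Without this lifting trick (or an equivalent argument) your Cauchy-estimate step is unsupported, and the large-$\la$ regime is exactly where the claim would fail.

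Two secondary remarks on efficiency. First, you propose rerunning Theorem \ref{main} in cylinders centred at every $x_0\in B_{1/2}$ to get uniform doubling; this is more than is needed and is not free, since the normalising quantity $\Theta$ in \eqref{T} is anchored at the origin and controlling its translated analogues requires a propagation-of-smallness/chain argument. The paper avoids this entirely: it uses \eqref{df} only once, at the origin, to produce a \emph{single} point $x_0\in B_{r_0/16}$ with $|u(x_0,0)|\ge e^{-C\mathcal K}$, and then counts zeros of the normalised restrictions $z\mapsto w(x_0+\theta z)/u(x_0,0)$ on complex lines through that one point via the Jensen-type Lemma (Theorem \ref{estan2}, from \cite{Han}), finishing with the coarea formula in polar coordinates about $x_0$ and a covering. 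Second, your Moser-iteration upgrade to $L^\infty$ doubling is fine but unnecessary in the paper's version, since the upper bound on the holomorphic extension is taken globally from \eqref{estan1} rather than propagated ball by ball. If you insert the lifting argument of Theorem \ref{anal} in place of your appeal to ``classical interior regularity'', your proof closes, and the single-base-point variant would shorten it considerably.
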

To the best of our knowledge, in the available literature the result most closely related to Theorem \ref{maindf} is the one due to F. H. Lin in \cite[Theorem 4.2]{Lin}. There he proved that, if $(M, g)$ is an analytic compact manifold without boundary, and if $u$ is a non-zero solution in $M \times [0, \infty)$ to the equation
\[
\partial_t u - \Delta_M u =0,
\]
then 
\begin{equation}\label{li1}
H^{n-1}\{x\in M: u(x,t) =0\} \leq C(n, g, M)\ N(t), 
\end{equation} 
where $N(t)$ is an explicitly computed frequency function depending on all the eigenvalues of $-\Delta_M$ (as well as on the initial values $u(x,0)$), see (4.7) in \cite{Lin}. It ensues that \eqref{li1} has a global character, whereas \eqref{df1} is local in nature. We also refer to \cite{HLi} for  results related to \eqref{li1}, but with lower regularity assumptions on the coefficients. 

\medskip

As a further  consequence of Theorem \ref{main}, in the last part of our work we establish some new elliptic-type global results of Landis type for solutions to \eqref{meq}. More precisely, we are interested in deriving lower bounds on the decay rate of global solutions in $\Rn \times [0, \infty)$ to the parabolic equation 
\begin{equation}\label{e0}
u_t + \Delta u  = \sa b(x,t),\nabla u\da + V(x,t) u,
\end{equation}
under different assumptions on $b(x,t)$ and $V(x,t)$. Our results are in part motivated by the work \cite{BGspringer} of two of us, on Modica type gradient bounds for the reaction-diffusion equation in $\Rn\times [0,\infty)$
\begin{equation}\label{rd}
u_t + \Delta u = F'(u).
\end{equation}
In Theorem 4.1 in that paper it was shown that, surprisingly, any bounded solution to \eqref{rd} satisfies the following global gradient bound
\[
|\nabla u(x,t)|^2\le 2 F(u(x,t)),
\]
without any assumption on the right-hand side, other than $F\ge 0$. When $F(u) = \frac{1}4(1-u^2)^2$ is the double-well potential, for which $F'(u) = u^3 - u$, there exist eternal traveling wave solutions $|u(x,t)|\le 1$ of \eqref{rd} of the form
\begin{equation}\label{twave}
u(x',x_n,t) = v(x',x_n - ct),\ \ \ \ \ \ c\ge 0,
\end{equation}
for which $\partial_{x_n} u (x)  > 0$. This led to formulate the following parabolic version of a famous conjecture of De Giorgi which is presently fully open.

\medskip
 
\noindent \textbf{Conjecture:} \emph{Let $u$  be a solution to \eqref{rd} in $\Rn \times [0,\infty)$ 
such that $|u|\le 1$, and $\partial_{x_n} u (x,t)  > 0$ for all $(x,t)  \in \Rn \times [0,\infty)$. Then, $u$ must  be an eternal traveling wave, i.e. after  a  change  of coordinates, $u$ must be of the  form \eqref{twave}}. 

\medskip

Before stating our next results, Theorems \ref{main2}, \ref{main1} and Corollary \ref{landis1}, we mention that, in the time-independent situation, the relevant problems were originally posed by Landis in the late 60's, see \cite{KLa}. He conjectured that if a solution in an exterior domain $\Om = \Rn\setminus K$ to the equation
\begin{equation}\label{f0} 
\Delta u = V(x) u
\end{equation}
decays faster than $e^{-\kappa |x|}$, for some $\kappa > ||V||_{L^{\infty}(\Om)}$, then  $u \equiv 0$ in $\Om$. This should be seen as an unique continuation property at infinity for the Schr\"odinger equation \eqref{f0}. Landis' conjecture was disproved by Meshkov, who constructed a bounded complex-valued $V$ and a nontrivial solution $u$ to \eqref{f0} satisfying $|u(x)| \leq Ce^{-C|x|^{4/3}}$, see \cite{Me}. Bourgain and Kenig  showed in \cite{BK} that if $u$ is a bounded solution to \eqref{f0} with $||V||_{L^{\infty}} \leq 1$, then one has 
\begin{equation}\label{bk}
\int_{B_1(x_0)} u^2(x) dx \geq C e^{-|x_0|^{4/3} \log |x_0|},
\end{equation}
and they used such lower bound in their resolution of Anderson localisation for the Bernoulli problem. We note that \eqref{bk} constitutes a sharp quantitative  decay in view of Meshkov's example.   Although Landis' conjecture presently remains a challenging open problem for real-valued $V$ and $u$, some interesting partial progress exist. Under the assumption $V\geq 0$,  it was proved in \cite{KSW} in the planar case. The sign restriction $V\ge 0$ has been recently removed in \cite{LMNN}, thus the conjecture has been fully resolved in $\R^2$.  For various other partial results on Landis' conjecture and its variants, we refer to  \cite{D}, \cite{DKW}, \cite{DW}, \cite{KW},  \cite{LUW}, \cite{LW},   \cite{Ro}.  We also mention \cite{EKPV}, where it was shown that exponential decay of super-Gaussian type is not possible at infinity for forward parabolic problems.

Returning to global decay estimates for solutions of \eqref{e0}, we first state a result for constant drift and potential.
	
\begin{thrm}\label{main2}
Let $c_0 \in \mathbb C, b_0 \in \mathbb C^n$, and assume that $u$ be a solution in $\Rn \times [0, \infty)$ to the equation
\begin{equation}\label{lan1}
u_t + \Delta u  = c_0 u + \sa b_0, \nabla u\da, 
\end{equation}
such that $u(\cdot, 0) \not \equiv 0$ and that for some $C\ge 0$ one has for $x\in \Rn$ and $t\in [0,\infty)$
\begin{equation}\label{expb1}
|u(x,t)| \leq Ce^{C(|x| + |t|^{1/2})}.
\end{equation}	
Then the following decay estimate holds for $x_0 \in \Rn $ with $|x_0| =R>>1$
\begin{equation}\label{dec1}
\int_{	B_1(x_0)} u^2(x, 0) dx \geq e^{-\tilde C R \log R}.
\end{equation}
where $\tilde C>0$ depends only on $n, C, |b_0|, |c_0|$ and $\int_{B_1} u^2(x, 0)dx$.
\end{thrm}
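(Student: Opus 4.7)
The plan is to propagate the nontriviality of $u(\cdot,0)$ on $B_1$ out to $B_1(x_0)$ by iterated application of Theorem~\ref{main} along a chain of balls connecting $0$ to $x_0$. Since $b_0,c_0$ are constants, \eqref{lan1} is translation-invariant in $x$, so for every $y \in \Rn$ the shifted function $u_y(x,t) := u(x+y, t)$ solves \eqref{lan1} with the same drift and potential norms $\|b\|_{1,1/2} \le |b_0|$ and $\|V\|_{1,1/2} \le |c_0|$. Hence Theorem~\ref{main} is available at every spatial center with identical constants. After the normalization $u \mapsto u/\sqrt{m_0}$, with $m_0 := \int_{B_1} u^2(x,0)dx$, we may assume $m_0 = 1$, absorbing the resulting factor into the final constant $\tilde C$.

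Choose centers $y_0 = 0, y_1, \dots, y_K = x_0$ on the segment $[0, x_0]$ with step $|y_{k+1}-y_k| = s$ for some $s \in (0,1/2]$ to be selected, and $K \asymp R/s$. Write $m_k := \int_{B_1(y_k)} u^2(x,0)dx$ and $\rho_k := \log(1/m_k)$; the goal is $\rho_K \le \tilde C R\log R$. The exponential growth bound \eqref{expb1} gives $\int_{Q_4(y_k,0)} u^2\,dxdt \le C_1 e^{C_1 R}$ uniformly in $k$, so the doubling-type quantity $\Theta_k$ appearing in Theorem~\ref{main} centered at $y_k$ satisfies $\log(N\Theta_k) \le C_2(R + \rho_k) + C_2$. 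To neutralize the singular $1/m_k$ contribution in $\mathcal K_k$, we rescale at each step and apply Theorem~\ref{main} to $\hat u_k := u/\sqrt{m_k}$, which has unit mass on $B_1(y_k)$. Using $B_{1-s}(y_k) \subset B_1(y_{k+1})$ and multiplying back by $m_k$, this yields the one-step bound
\[
m_{k+1} \ge m_k (1-s)^{\hat{\mathcal K}_k}, \qquad \hat{\mathcal K}_k \le C_3(R + \rho_k + 1),
\]
equivalently $\rho_{k+1} \le \rho_k + |\log(1-s)|\,C_3(R + \rho_k + 1)$.

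Iterating this recursion along the chain, and optimizing $s$ so as to balance the chain length $K = R/s$ against the per-step loss $|\log(1-s)|\,C_3 R$, a Gr\"onwall-type estimate produces $\rho_K \le \tilde C R\log R$, which is exactly \eqref{dec1}.

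\emph{Main obstacle.} The principal difficulty is the nonlinear nature of the iteration: both the $1/m_k$ contribution (handled by the per-step rescaling to $\hat u_k$) and the $\log\Theta_k \approx R + \rho_k$ contribution feed $\rho_k$ back into the exponent controlling $\rho_{k+1}$, so that a naive chain with fixed step produces a bound of order $e^{CR}$ rather than $e^{-\tilde C R\log R}$. Obtaining the sharp rate requires a logarithmically small step size together with a careful summation of the resulting increments; this is where the main technical work of the proof lies.
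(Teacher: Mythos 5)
There is a genuine gap, and it lies exactly where you located the ``main technical work'': the optimization over the step size $s$ cannot produce the rate $R\log R$, no matter how $s$ is chosen. Your one-step recursion is $\rho_{k+1}\le \rho_k+|\log(1-s)|\,C_3(R+\rho_k+1)$, i.e.\ $\rho_{k+1}\le (1+C_3 s)\rho_k+C_3 s(R+1)$ up to constants. Over $K\asymp R/s$ steps the accumulated multiplier is $(1+C_3 s)^{R/s}\ge e^{cR}$ for \emph{every} $s\in(0,1/2]$, so the feedback of $\rho_k$ through $\log\Theta_k$ forces $\rho_K\gtrsim Re^{cR}$. Even if one discards that feedback entirely (pretending $\hat{\mathcal K}_k\le C_3(R+1)$ uniformly), the additive losses sum to $K\cdot sC_3R=C_3R^2$, yielding $e^{-CR^2}$ rather than $e^{-\tilde CR\log R}$. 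The obstruction is structural: each unit-scale application of Theorem \ref{main} at a center $y_k$ with $|y_k|\sim R$ costs an exponent of size at least $cR$ coming from $\log(N\Theta_k)\gtrsim R$ (this is already so for the harmonic function $e^{x_1}$), and this cost does not shrink proportionally to the step size, since the underlying doubling inequality \eqref{dub1} compares $B_{2r}$ to $B_r$ with a constant $e^{cR}$ independent of how close the chain centers are. (A smaller technical point: Theorem \ref{main} is stated only for $r\le 1/2$, so the radius $r=1-s$ you use is outside its scope except for $s=1/2$.)

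The paper avoids iteration altogether. One performs a single parabolic rescaling $w(x,t)=u(Rx+x_0,R^2t)$, so that $w$ solves \eqref{meq} in $Q_4$ with $\|b\|_{1,1/2}=|b_0|R$ and $\|V\|_{1,1/2}^{1/2}=|c_0|^{1/2}R$, and then applies the two-ball/one-cylinder inequality of Proposition \ref{2sphere1} \emph{once}, with $r=1/R$. The key structural feature of that proposition, absent from \eqref{df}, is that the constant $M_1$ in the exponential prefactor depends only on the coefficient norms (hence is $O(R)$ after rescaling) and \emph{not} on $\Theta$ or on $\int_{B_1}u^2(x,0)$; the global information enters instead through the factor $\bigl(N\int_{Q_4}w^2\,dX\bigr)^{N\log_2(2/r)/(1+N\log_2(2/r))}\le e^{C_2R}$ controlled by \eqref{expb1}. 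Raising the resulting inequality to the power $1+N\log_2 R$ then gives an exponent $O(R)\cdot O(\log R)$, which is precisely the sharp $R\log R$. In short: the journey from scale $1/R$ to scale $1$ must be made in one step, where the H\"older exponent degrades only like $1/\log R$, rather than in $\sim R$ unit steps whose costs compound multiplicatively. If you want to repair your argument, replace the chain by this single rescaling and invoke Proposition \ref{2sphere1} (which is the form of Theorem \ref{main} actually suited to this application).
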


In the next result, under the hypothesis that the potential $V(x,t)$ be bounded, we obtain a parabolic analogue of the Bourgain-Kenig type estimate in \eqref{bk} above.  	
	
\begin{thrm}[Space-like Bourgain-Kenig type decay]\label{main1}
Suppose that $V\in L^\infty(\Rn\times [0,\infty),\mathbb C)$,  and
let $u$ be a solution in  $\Rn \times [0, \infty)$ to the equation 
\begin{equation}\label{v1}
u_t  + \Delta  u  = V(x,t) u,
\end{equation}
such that $u(\cdot, 0) \not \equiv 0$, and satisfying \eqref{expb1}. Then there exists $C>0$, depending only on $n, M, ||V||_{\infty}$ and $\int_{B_1} u^2(x, 0)dx$, such that the following holds for $x_0 \in \Rn $ with $|x_0| =R>>1$
\begin{equation}\label{dec1}
\int_{	B_1(x_0)} u^2(x, 0) dx \geq e^{-C R^{4/3} \log R}.
\end{equation}
\end{thrm}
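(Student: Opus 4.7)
The plan is to derive Theorem \ref{main1} from Theorem \ref{main} by adapting the Bourgain--Kenig propagation-of-smallness argument to the parabolic setting. Because Theorem \ref{main} requires $V$ to be Lipschitz in $x$ and $C^{1/2}$ in $t$, while here $V \in L^\infty$, a preliminary step is to replace $V$ by a space-time mollification $V_\delta$, with $\delta$ coupled to the main optimization parameter $\rho$ introduced below; the perturbation $V - V_\delta$ is absorbed through the $\sqrt{\|V\|_{1,1/2}}$ term in \eqref{K}, using that $\|V_\delta\|_{1,1/2} \lesssim \delta^{-3/2}\|V\|_\infty$.

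Fix $x_0$ with $R = |x_0| \gg 1$ and a scale $\rho \in (0, R)$ to be optimized. Rescale around $x_0$ by setting $w(x,t) = u(x_0 + \rho x, \rho^2 t)$, which solves $w_t + \Delta w = \tilde V w$ with $\tilde V(x,t) = \rho^2 V(x_0 + \rho x, \rho^2 t)$ and $\|\tilde V\|_\infty \le \rho^2 \|V\|_\infty$. The assumption \eqref{expb1} yields $|w(x,t)| \le C\exp\bigl(C(R + \rho|x| + \rho|t|^{1/2})\bigr)$ on $Q_4$, providing an explicit upper bound on the ratio $\Theta$ in \eqref{T} attached to translated copies of $u$.

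I would then propagate the hypothesis $\int_{B_1} u^2(\cdot,0)\,dx > 0$ to a lower bound on $\int_{B_1(x_0)} u^2(\cdot,0)\,dx$ via a chain of points $y_0 = 0, y_1, \dots, y_N = x_0$ along the segment from $0$ to $x_0$ with spacing $\rho/2$, so that $N \sim R/\rho$. At each $y_k$, after normalizing so that $\int_{B_1} u^2(y_k + x, 0)\,dx = 1$, apply Theorem \ref{main} to $u(\cdot + y_k, \cdot)$; the bound \eqref{df} then transfers positive mass from $B_{\rho/2}(y_k)$ to $B_{\rho/2}(y_{k+1})$ at the cost of a factor $(\rho/2)^{\mathcal{K}_k}$, where $\mathcal{K}_k$ is controlled by \eqref{K} in terms of $R$, $\rho$, $\delta$ and $\|V\|_\infty$, the $R$-dependence coming from $\log \Theta_k$ via the growth bound and the $\|V\|_\infty$-dependence entering only through $\sqrt{\|V\|_\infty}$.

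Accumulating the $N \sim R/\rho$ steps and solving the resulting recursion for the mass at step $k$, a joint optimization in the chain scale $\rho$ and the mollification scale $\delta$---in the spirit of \cite{BK}---balances the competing contributions and produces the sharp exponent $R^{4/3}\log R$ of \eqref{dec1}. The main obstacle is precisely this multi-parameter bookkeeping: one must track carefully how $\mathcal{K}_k$ enters the recursion at each step, ensure that $\|V\|_\infty$ appears only through its square root and that the exponential growth \eqref{expb1} contributes only logarithmically through $\log\Theta$, since any loss in these dependencies degrades the exponent beyond the Meshkov--Bourgain--Kenig rate $4/3$.
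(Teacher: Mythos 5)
Your proposal has two genuine gaps, and the second is fatal to the exponent. First, the mollification step does not work: $u$ solves \eqref{v1} with $V$, not with $V_\delta$, and for a merely bounded $V$ the difference $V-V_\delta$ is \emph{not} small in $L^\infty$ (it is only $O(\|V\|_\infty)$), so the error term $(V-V_\delta)u$ cannot be "absorbed through the $\|V\|_{1,1/2}^{1/2}$ term in \eqref{K}" --- Theorem \ref{main} has no inhomogeneous term to absorb it into. The only place such an $L^\infty$ error can be absorbed is inside a Carleman inequality, and this is exactly what the paper does: it proves a separate estimate (Lemma \ref{carleman1}) for $w_t+\Delta w+Vw$ with $V\in L^\infty$, obtained from the Escauriaza--Fern\'andez--Vessella estimate by absorbing $\|V\|_\infty^2\s_a^{1-2\A}w^2$ into $\A^2\s_a^{-2\A}w^2$ using $\s_a\le 1/\A$; this forces $\A\gtrsim \|V\|_\infty^{2/3}$, and it is precisely this $2/3$ power (not the $\|V\|_{1,1/2}^{1/2}$ of Theorem \ref{main}) that, after the rescaling $\tilde V=R^2V$, produces $(R^2)^{2/3}=R^{4/3}$. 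Your route through Theorem \ref{main} cannot produce the exponent $4/3$ because the relevant norm there is the wrong one.

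Second, the chain of $N\sim R/\rho$ balls is both unnecessary and destructive. The exponent $\mathcal K_k$ in \eqref{df} contains $\log\Theta_k$, whose denominator is exactly the mass you are propagating; the recursion is therefore of the form $|\log m_{k+1}|\le C|\log m_k|+(\text{cost})$, which is exponential in the number of steps $R/\rho$ and cannot yield $e^{-CR^{4/3}\log R}$ unless the chain has $O(1)$ steps. (A further warning sign: your own additive accounting of the per-step costs would give $e^{-CR}$, better than the Meshkov rate, which is known to be unattainable.) The paper, following Bourgain--Kenig, instead performs a \emph{single} rescaling $w(x,t)=u(Rx+x_0,R^2t)$ centred at $x_0$, so that the origin lies at unit distance in the new variables, and applies once the two-ball/one-cylinder inequality of Proposition \ref{2sphere2} with $r=1/R$; the growth hypothesis \eqref{expb1} controls the cylinder term, and raising to the power $1+N\log_2 R$ gives \eqref{dec1} directly. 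You would need to replace your scheme by this one-step argument, with Lemma \ref{carleman1} (or an equivalent $L^\infty$-Carleman estimate with $\|V\|_\infty^{2/3}$ dependence) as the quantitative engine.
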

	
Again, we note that in view of Meshkov's counterexample,  the estimate  \eqref{dec1} is sharp.
As a corollary of Theorem \ref{main1}, we have the following result.
	
\begin{cor}[Space-like unique continuation at infinity]\label{landis1}
Suppose that $u$ and $V$ satisfy the hypothesis in Theorem \ref{main1}. 
If for some $\ve>0$, 
\begin{equation}\label{dec2} 
|u(x, 0)| \leq Ce^{-|x|^{4/3+ \ve}},
\end{equation} 
then $u \equiv 0$ in $\Rn \times [0, \infty)$.
\end{cor}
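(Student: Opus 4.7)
The strategy is to argue by contradiction, combining the sharp Bourgain--Kenig type lower bound of Theorem \ref{main1} with the hypothesised super-$4/3$ decay in \eqref{dec2}, and then to pass from $u(\cdot,0)\equiv 0$ to $u\equiv 0$ via backward uniqueness for the parabolic operator.

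First I would show that $u(\cdot,0)\equiv 0$. Suppose not; then there exists $x_1\in\mathbb R^n$ with $\int_{B_1(x_1)}u^2(x,0)\,dx>0$. The translated solution $\tilde u(x,t)=u(x+x_1,t)$ satisfies
\[
\tilde u_t+\Delta\tilde u=\tilde V\,\tilde u,\qquad \tilde V(x,t)=V(x+x_1,t),
\]
with $\|\tilde V\|_\infty=\|V\|_\infty$, obeys the growth bound $|\tilde u(x,t)|\leq Ce^{C|x_1|}e^{C(|x|+t^{1/2})}$ inherited from \eqref{expb1}, and has $\int_{B_1}\tilde u^2(x,0)\,dx>0$. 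All the hypotheses of Theorem \ref{main1} are therefore verified for $\tilde u$, and for every $x_0\in\mathbb R^n$ with $R=|x_0|\gg 1$ one obtains a constant $\tilde C>0$ such that
\[
\int_{B_1(x_0)}\tilde u^2(x,0)\,dx\;\geq\; e^{-\tilde C R^{4/3}\log R}.
\]

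On the other hand, from \eqref{dec2} one has $|\tilde u(x,0)|\leq Ce^{-(|x|-|x_1|)^{4/3+\varepsilon}}$ for $|x|\geq|x_1|$, and since $x\in B_1(x_0)$ forces $|x|\geq R-1$, one obtains the matching upper bound
\[
\int_{B_1(x_0)}\tilde u^2(x,0)\,dx\;\leq\; C'\,e^{-2(R-1-|x_1|)^{4/3+\varepsilon}}.
\]
As $R\to\infty$, $(R-1-|x_1|)^{4/3+\varepsilon}\sim R^{4/3+\varepsilon}$, which dominates $R^{4/3}\log R$ by the factor $R^\varepsilon/\log R\to\infty$; hence the two bounds are incompatible for $R$ sufficiently large. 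This contradiction forces $u(\cdot,0)\equiv 0$.

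To conclude that $u\equiv 0$ in $\mathbb R^n\times[0,\infty)$, I would invoke a classical backward uniqueness argument for the backward parabolic operator $\partial_t+\Delta-V$ under the exponential growth \eqref{expb1}. For arbitrary $T>0$ the function $v(x,s)=u(x,T-s)$ on $\mathbb R^n\times[0,T]$ solves a \emph{forward} parabolic equation $v_s=\Delta v-V(x,T-s)v$ with bounded potential and terminal data $v(\cdot,T)=u(\cdot,0)\equiv 0$; standard backward uniqueness for such equations (of Lions--Malgrange / Agmon--Nirenberg type, whose growth hypotheses are easily met by \eqref{expb1}) yields $v\equiv 0$, and hence $u(\cdot,T)\equiv 0$. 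Since $T>0$ is arbitrary, $u\equiv 0$. The only real obstacle I foresee is bookkeeping: ensuring that translation preserves all the quantitative hypotheses of Theorem \ref{main1} (the new constants depending polynomially on $|x_1|$, which is fixed), so that the exponent gap $R^\varepsilon/\log R\to\infty$ remains robustly sufficient to drive the contradiction; the terminal backward-uniqueness step is standard under the imposed growth.
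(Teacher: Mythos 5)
Your proof is correct and follows the same overall architecture as the paper's: contradict the Bourgain--Kenig type lower bound of Theorem \ref{main1} with the super-$4/3$ decay \eqref{dec2} to force $u(\cdot,0)\equiv 0$, then upgrade to $u\equiv 0$ by backward uniqueness. The one genuine difference is how you propagate the vanishing of $u(\cdot,0)$ to all of $\Rn$. The paper runs the contradiction only at the origin (after normalising so that $u(\cdot,0)\not\equiv 0$ in $B_1$), concludes $u(\cdot,0)\equiv 0$ in $B_1$, and then invokes the space-like strong unique continuation results of \cite{EF}, \cite{EFV} to spread the vanishing to all of $\Rn$. You instead translate a putative non-vanishing point $x_1$ to the origin and re-apply Theorem \ref{main1} to the translated solution; this is legitimate (the translated potential has the same $L^\infty$ norm, the growth bound \eqref{expb1} survives up to the harmless factor $e^{C|x_1|}$, and $\int_{B_1}\tilde u^2(x,0)\,dx>0$ so the normalising constant in Theorem \ref{main1} is finite), and it buys you independence from the space-like SUCP theorem at the cost of re-running the quantitative estimate at every base point. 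One caveat on the final step: Lions--Malgrange/Agmon--Nirenberg backward uniqueness is a Hilbert-space log-convexity statement and does not apply as stated to solutions merely satisfying \eqref{expb1}, since such $u$ need not belong to $L^2(\Rn)$ at any time level; the appropriate tools are the whole-space/half-space backward uniqueness theorems of \cite{Ch}, \cite{ESS}, \cite{Po}, which accommodate exponential (indeed Gaussian-type) growth and are exactly what the paper cites. This is an attribution issue rather than a gap, since the needed result exists and your reduction to it (reversing time so that the terminal datum vanishes) is the standard one.
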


\medskip

In closing, we mention that there is a large literature on quantitative unique continuation. While we refer the reader to the introductions of \cite{BG2} and \cite{AB} for a more detailed account, the following is a list of some of the most relevant works: \cite{Bk},  \cite{BCa}, \cite{AA}, \cite{BK}, \cite{CK},  \cite{EF}, \cite{EFV}, \cite{EV},  \cite{GL1}, \cite{GL2},  \cite{Ku}, \cite{Ku2},  \cite{KL}, \cite{Me},  \cite{Ru}, \cite{Ve},  \cite{Zhu1} and \cite{Zhu2}.

The present paper is organised as follows. In section \ref{s:n}, we introduce the relevant notation and gather some  known results that are needed in our work. In section \ref{s:main} we prove Theorem \ref{main}. The key ingredients in the proof are:
\begin{itemize}
\item[1)] a delicate quantitative version of the Carleman estimate in \cite{EFV}. This result is Lemma \ref{carleman} below, and it represents one of the crucial novelties  of the present work; 
\item[2)] the monotonicity in time in Lemma \ref{mon} below. This result provides a sharp  quantitative passage of the vanishing order information to $t=0$, in which we make explicit the precise dependence on the $C^{1}$ norm of $b$ and $V$.
\end{itemize}
Once Lemmas \ref{carleman} and \ref{mon} are established, we can essentially  repeat the  arguments in \cite{AB} to obtain the information claimed in Theorem \ref{main}. As an application of Theorem \ref{main}, in Section \ref{dof} we prove Theorem \ref{maindf} .
Finally, in Section \ref{landis} we prove Theorems \ref{main2} and \ref{main1}. In  closing, whenever we say  that a constant $N$ is universal, it means that  it depends only on the dimension $n$, and the ellipticity and the Lipschitz constants $\Lambda$ and $M$ in \eqref{ell}, \eqref{ass}. Throughout the paper we will use $N$ as an all purpose constant which may vary from line to line, but will be universal. 	

%%%%%%%%%%%%%%%%%%%%%%%%%%%%%

\section{Preliminary results}\label{s:n}	

In this section we introduce the relevant notation and collect some results that will be used in the main body of the paper.
Points in $\Rn$ will be denoted by $x, y$, etc. For those in space-time $\Rn \times [0, \infty)$, we will use the notation  $X=(x,t), Y=(y,s)$, etc. Whenever convenient, the partial derivative $\partial_{x_i} f$  will be denoted by $f_i$ or $D_i f$, that in $t$ will be denoted by $f_t$ or by $\p_t f$. We will often write $\nabla f$ and  $\operatorname{div} f$ instead of  $\nabla_x f$ and $ \operatorname{div}_x f$, respectively.  
We recall that the \emph{vanishing order} of a function $u$ at $x$ is the largest  integer $\ell$ such that $D^{\alpha} u (x)=0$ for all $|\alpha| \leq \ell$. Here, with $\mathbb N_0 = \mathbb N \cup \{0\}$, we have denoted by $\alpha=(\alpha_1, ..., \alpha_n)\in \mathbb N_0^n$ a multi-index, and have let  $D^\alpha u(x) = \frac{\p^{\alpha_1+...+\alpha_n} u}{\p x_1^{\alpha_1}...\p x_n^{\alpha_n}}(x)$. Given an open set $\Om\subset \Rn\times \R$,
we indicate with $C_0^{\infty}(\Omega)$ the set of functions in $C^\infty(\Om)$ having compact support in $\Omega$. Also, we will indicate by $dX = dxdt$ the Lebesgue measure in $\Rn\times \R$. We will denote by $B_r(x)$ the ball of radius $r$ centred at $x \in \R^n$, whereas $Q_r(x,t)$ will denote the space-time  cylinder $B_r(x) \times [t,t+r^2]$. When $x = 0$, we will simply write $B_r$ and $Q_r$, instead of $B_r(0)$ and $Q_r(0,0)$, respectively. 

Recall now the parabolic dilations $\delta_\la X = (\la x,\la^2 t)$. A function $f:\Rn\times \R\to \R$ is (parabolically) homogeneous of degree $\kappa\in \R$ if
\[
f(\delta_\la X) = \la^\kappa f(X).
\]
The infinitesimal generator of the dilations $\delta_\la$ is the vector field $Z$ specified by the formula  
\begin{align}\label{defz}
Z f(X) = \frac{d}{d\la} f(\delta_\la X)_{\big|_{\la = 1}} =  \langle x, \nabla f \rangle + 2t \p_t f.
\end{align}
Euler formula holds: a $f\in C^1(\Rn\times \R)$ is homogeneous of degree $\kappa$ if and only if 
\begin{equation}\label{euler}
Z f(X) = \kappa f(X).
\end{equation} 
Since $\operatorname{div} Z = n+2$, it is easily verified that for any $b\not= 0$ one has in $\Rn\times (0,\infty)$
\begin{equation}\label{divZb}
\operatorname{div}(t^b Z) = (2b + n + 2) t^b.
\end{equation}
It follows from \eqref{divZb} that
\begin{equation}\label{divzero}
b = - \frac{n}2 - 1\ \ \Longrightarrow\ \  \operatorname{div}(t^b Z) = 0.
\end{equation}
In this paper we routinely identify a vector field in $\Rn$ with the corresponding first-order differential operator. Thus, 
$\mathscr Z = (\mathscr Z_1,...,\mathscr Z_n)$ is identified with $\mathscr Z = \sum_{k=1}^n \mathscr Z_k D_k$. 
With this identification, given a function $f$, we will write $\mathscr Zf = \sum_{k=1}^n \mathscr Z_k D_k f = \sa \mathscr Z,\nabla f\da$.
Given two vector fields $X = \sum_{i=1}^n X_i D_i$, $Y = \sum_{j=1}^n Y_j D_j$, their commutator is the vector field, or first-order differential operator, defined by
\[
[X,Y] =  XY - YX = \sum_{i,j=1}^n \big(X_i D_iY_j - Y_i D_iX_j\big) D_j.
\]
Similar considerations for vector fields in $\R^{n+1}$. For later use, 
we note the following integration by parts formula. Let $\Om\subset \R^{n+1}$ be a  piecewise $C^1$ bounded domain, and $f, g\in C^1(\overline \Om)$, and let $\mathscr Z\in C^{0,1}(\overline \Om,\R^{n+1})$. Then
\begin{equation}\label{ibp}
\int_\Om f \mathscr Z g dX = \int_{\p \Om} f g \sa \mathscr Z,\nu\da d\sigma - \int_{\Om} f g \operatorname{div} \mathscr Z dX - \int_{\Om} g \mathscr Z f dX.
\end{equation}
We will also use the following well-known Rellich identity, see \cite{Rel}.

\begin{prop}\label{P:rellich}
Let $\Om\subset \Rn$ be a piecewise $C^1$  bounded open set, $\mathscr Z\in C^{0,1}(\overline \Om;\Rn)$, and $f\in
C^{1,1}(\overline \Om)$. Then,
 \begin{align*}
 & 2 \int_{\p \Om} \mathscr Z f\ \frac{\p f}{\p \nu} d\sigma -  \int_{\p
\Om} |\nabla
f|^2 \langle \mathscr Z,\nu\rangle d\sigma \\
& = 2 \int_\Om  \mathscr Z f \Delta f dx -  \int_\Om \operatorname{div} \mathscr Z |\nabla
f|^2 dx + 2 \sum_{j=1}^n \int_\Om [D_j, \mathscr Z] f\ D_j f dx. \notag
\end{align*}
\end{prop}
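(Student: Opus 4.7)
The plan is to compute the interior integral $2\int_\Omega \mathscr Zf \Delta f\,dx$ by integration by parts in the standard way and then identify the correction terms as the divergence and commutator contributions. Writing $\Delta f=\sum_j D_j(D_j f)$ and applying \eqref{ibp} componentwise (with $f\mapsto \mathscr Zf$ and $g\mapsto D_j f$, using $\mathscr Z=\sum_k D_k$ as background and the fact that differentiation in the $j$-th Cartesian direction has divergence $0$), I would first obtain
\begin{equation*}
2\int_\Omega \mathscr Zf\,\Delta f\,dx = 2\int_{\p\Omega}\mathscr Zf\,\frac{\p f}{\p\nu}\,d\sigma - 2\sum_{j=1}^n\int_\Omega D_j(\mathscr Zf)\,D_j f\,dx.
\end{equation*}
This is the boundary term that will match the first term on the left of the Rellich identity.

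Next, to handle the interior piece I use the elementary algebraic identity
\begin{equation*}
D_j(\mathscr Zf)=\mathscr Z(D_j f)+[D_j,\mathscr Z]f,
\end{equation*}
which holds pointwise almost everywhere under the $C^{1,1}$ hypothesis on $f$ and Lipschitz regularity of $\mathscr Z$. Substituting and collecting the commutator terms produces exactly $-2\sum_j\int_\Omega [D_j,\mathscr Z]f\,D_j f\,dx$, which is the last term on the right of the desired identity. What remains is
\begin{equation*}
-2\sum_{j=1}^n\int_\Omega \mathscr Z(D_j f)\,D_j f\,dx = -\sum_{j=1}^n\int_\Omega \mathscr Z\bigl((D_j f)^2\bigr)\,dx = -\int_\Omega \mathscr Z(|\nabla f|^2)\,dx,
\end{equation*}
where I used that $\mathscr Z$ is a first-order derivation and so $2\,\mathscr Z(\varphi)\,\varphi=\mathscr Z(\varphi^2)$.

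Finally, I apply \eqref{ibp} once more, with $f\equiv 1$ and $g=|\nabla f|^2$, to turn this last integral into a boundary term plus a $\operatorname{div}\mathscr Z$ term:
\begin{equation*}
\int_\Omega \mathscr Z(|\nabla f|^2)\,dx=\int_{\p\Omega}|\nabla f|^2\,\langle\mathscr Z,\nu\rangle\,d\sigma-\int_\Omega |\nabla f|^2\,\operatorname{div}\mathscr Z\,dx.
\end{equation*}
Inserting this into the previous display and transposing the two boundary contributions to the left-hand side yields the asserted identity.

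The only real point to watch is regularity: with $\mathscr Z\in C^{0,1}$ and $f\in C^{1,1}$, the products $D_j(\mathscr Zf)$ and $\operatorname{div}\mathscr Z$ exist only almost everywhere and lie in $L^\infty$, so all the integrands above are bounded and the integration-by-parts formula \eqref{ibp} still applies (either directly if one extends \eqref{ibp} to Lipschitz data, or via a routine mollification argument in which $f$ and $\mathscr Z$ are approximated by smooth objects and the limit is taken using dominated convergence). This technical point is the main obstacle; once the integration-by-parts step is justified in this class, the identity follows by the purely algebraic manipulations above.
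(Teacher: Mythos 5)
Your proof is correct: the two integrations by parts, the commutator identity $D_j(\mathscr Zf)=\mathscr Z(D_jf)+[D_j,\mathscr Z]f$, and the derivation step $2\,\mathscr Z(\varphi)\varphi=\mathscr Z(\varphi^2)$ combine to give exactly the stated identity (and the signs check out against the special case \eqref{rell}). The paper offers no proof of its own — it cites Rellich — and your argument is the standard derivation, with the regularity/mollification caveat appropriately flagged.
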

In particular, when $\mathscr Z(x) = x$, then $[D_j, \mathscr Z] = D_j$, and  Proposition \ref{P:rellich} gives for $f\in
C^{1,1}_0(\overline \Om)$,
\begin{equation}\label{rell}
\int_\Om  \sa x,\nabla f\da \Delta f dx = \left(\frac n2 -1\right) \int_\Om |\nabla f|^2 dx.
\end{equation}

The following generalisation of Proposition \ref{P:rellich} for variable coefficient operators will be needed, see \cite[formula (2.4)]{PW}, or also  \cite[Lemma 2.11]{GV} for an extension. In the next statement, $\Om$, $\mathscr Z$ and $f$ are as in Proposition \ref{P:rellich}, and we use the summation convention over repeated indices.

\begin{prop}\label{P:vrellich}
Let $B \in C^{0,1}(\overline \Om;M_{n \times n}(\R))$. Then
 \begin{align*}
 & 2 \int_{\p \Om} \mathscr Z f\ \langle B \n f, \nu \rangle d\sigma -  \int_{\p
\Om} \langle B \n f, \n f \rangle \langle \mathscr Z,\nu\rangle d\sigma = 2 \int_\Om  \mathscr Z f \D( B \n f) dx 
\\
& -  \int_\Om \operatorname{div} \mathscr Z \langle B \n f, \n f \rangle dx + 2  \int_\Om  b_{ij} D_i \mathscr Z_k D_j f D_k f  dx -\int_{\Om} \mathscr Z b_{ij} D_i f D_j f  dx. 
\end{align*}
\end{prop}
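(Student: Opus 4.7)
The plan is to derive the identity by starting from the bulk integral $2\int_\Omega \mathscr{Z} f\, \D(B\nabla f)\, dx$ and integrating by parts twice, using the Leibniz rule in a way that lets the symmetry (or symmetric part) of $B$ organise the second-derivative terms.

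First I would write $\mathscr{Z} f\, \D(B\nabla f) = \mathscr{Z}_k D_k f\, D_i(b_{ij} D_j f)$ and integrate by parts in $x_i$. This produces the boundary term $\int_{\partial\Omega} \mathscr{Z} f\, \langle B\nabla f, \nu\rangle\, d\sigma$ and two interior terms, namely
\begin{equation*}
-\int_\Omega b_{ij}\, (D_i \mathscr{Z}_k)\, D_k f\, D_j f\, dx - \int_\Omega b_{ij}\, \mathscr{Z}_k D_{ik}f\, D_j f\, dx.
\end{equation*}
The first of these is already the ``$2\int b_{ij} D_i \mathscr{Z}_k D_j f D_k f$'' term we want (up to the factor of $2$ from the final multiplication). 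The difficulty is the second, which carries the pure second derivatives of $f$; this is exactly where the Pohozaev/Rellich trick must be applied.

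Next I would use the symmetry of $B$ (without loss of generality, since $\langle B\nabla f,\nabla f\rangle$ only sees the symmetric part) to recognise
\begin{equation*}
b_{ij}\, D_{ik}f\, D_j f = \tfrac12 D_k(b_{ij} D_i f\, D_j f) - \tfrac12 (D_k b_{ij})\, D_i f\, D_j f,
\end{equation*}
so that the troublesome interior term becomes $\tfrac12 \mathscr{Z}_k D_k\langle B\nabla f,\nabla f\rangle - \tfrac12 (\mathscr{Z} b_{ij})\, D_i f\, D_j f$. A second integration by parts, applied now to $\tfrac12 \mathscr{Z}_k D_k\langle B\nabla f,\nabla f\rangle$ via the formula \eqref{ibp} (with $\mathscr{Z}$ playing the role of the vector field), produces the remaining boundary piece $\tfrac12 \int_{\partial\Omega} \langle B\nabla f,\nabla f\rangle \langle \mathscr{Z},\nu\rangle\, d\sigma$ together with the interior term $-\tfrac12 \int_\Omega \D\mathscr{Z}\, \langle B\nabla f,\nabla f\rangle\, dx$.

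Finally I would collect everything, multiply through by $2$, and move the two boundary integrals to the left-hand side, yielding the stated identity. The main obstacle is really bookkeeping: keeping track of index placements, applying symmetry of $B$ to get the factor of $2$ correctly, and ensuring the sign of the $\mathscr{Z} b_{ij}$-term is right. No new analytic ingredient beyond $C^{0,1}$ regularity of $B$ and $\mathscr{Z}$ and $C^{1,1}$ regularity of $f$ (which makes all integrations by parts legitimate) is needed.
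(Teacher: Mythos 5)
Your computation is correct and is the standard Rellich--Pohozaev derivation; the paper itself gives no proof of Proposition \ref{P:vrellich}, citing instead Payne--Weinberger \cite{PW} and \cite{GV}, and your two integrations by parts with the splitting $b_{ij}D_{ik}fD_jf=\tfrac12 D_k\langle B\nabla f,\nabla f\rangle-\tfrac12(D_kb_{ij})D_ifD_jf$ reproduce exactly that argument. One small caveat: that splitting genuinely requires $B$ symmetric (for non-symmetric $B$ one only gets $D_k\langle B\nabla f,\nabla f\rangle=(D_kb_{ij})D_ifD_jf+(b_{ij}+b_{ji})D_{ik}fD_jf$, so the "WLOG" does not literally apply to the terms $\D(B\nabla f)$ and $\langle B\nabla f,\nu\rangle$), but since the paper only invokes the identity with the symmetric matrix $B=A$, this is harmless.
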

In particular, when $\mathscr Z = Bx$ and $B(0)=\mathbb I_n$, one easily obtains 
\[
\begin{cases}
\D \mathscr Z  =n+O(|x|),
\\
\langle \mathscr Z B \n f, \n f \rangle = O(|x|) |\n f|^2,
\\
b_{ij} D_i \mathscr Z_k D_j f D_k f = \langle B  \n f, \n f \rangle +O(|x|)|\n f|^2.
\end{cases}
\]
Therefore, Proposition \ref{P:vrellich} gives for $f \in C^{1,1}_{0}(\overline{\Om}),$
\begin{align}\label{vrell}
\int_\Om  \langle B x, \n f \rangle \D( B \n f) dx= \left(\frac{n}2-1 \right) \int_{\Om} \langle B \n f, \n f \rangle dx + \int_{\Om}O(|x|)|\n f|^2 dx.
\end{align}

Henceforth in this paper, we will denote by $G$ the function in $\Rn\times (0,\infty)$
\begin{equation}\label{G}
G(x,t) = t^{-n/2} e^{-|x|^2/4t}.
\end{equation}
As it is well-known, it satisfies the equations
\begin{equation}\label{nG}
\n G = - \frac{x}{2t} G,\ \ \ \ \ \ \ \Delta G = G_t.
\end{equation}
Furthermore, $G$ is homogeneous of degree $- n$, and therefore according to \eqref{euler} we have in $\Rn\times (0,\infty)$
\begin{equation}\label{ZG}
Z G = - n\ G.
\end{equation}
Keeping \eqref{nG} in mind, 
formula \eqref{ZG} can be rewritten in the form
\begin{equation}\label{Gt}
G_t = - \frac{n}{2t} G - \sa \frac{x}{2t},\n G\da = \left(\frac{|x|^2}{4t^2} -\frac{n}{2t}\right) G.
\end{equation}

We now state four preparatory results that will be  needed in our work in Section \ref{s:main}. In the order, they correspond to \cite[Lemmas 4 \& 5]{EF}, and to \cite[Lemmas 3 \& 4]{EFV}. We consider the function $\theta:(0,1)\to \R^+$ defined by 
\begin{equation}\label{theta}
\theta(t)=t^{1/2}\left( \operatorname{log}\frac{1}{t}\right)^{3/2}.
\end{equation} 
This function corresponds to the choice $\beta = 1$ in \cite[Lemma 5]{EF}, and it does satisfy the assumptions
\[
0 \le \theta(t) \le N,\ 0\le t \le 1,\ \ \ \ \ \text{and}\ \ \ \ \int_0^1 \left(1+\log \frac 1t\right) \theta(t) \frac{dt}t \le N,
\]
in their Lemma 4, except the third one: $|t\theta'(t)|\le N \theta(t)$, since 
\[
\frac{t |\theta'(t)|}{\theta(t)}\ \underset{t\to 1^-}{\longrightarrow}\ \infty.
\]
On the other hand, this small inconsistency does not affect the validity of \cite[Lemmas 4 \& 5]{EF} since what really matters in these results is what happens near $t=0$. For this reason, we will henceforth restrict our focus to an interval which stays away from $t = 1$. For instance, we will confine the domain of the function in \eqref{theta} to the interval $t\in [0,1/2]$, and state our results accordingly. We leave it to the reader to check that there exists $N>0$ such that
\[
|t\theta'(t)|\le N \theta(t),\ \ \ \ \ \ \ \ \ \ t\in (0,1/2].
\]
	
\begin{lemma}\label{sig}
Let $\theta:[0,1/2]\to \R^+$ be as in \eqref{theta}, and for given $\lambda >0$, let $\s:[0,1/2]\to \R$ be the function
\[
\s(t) = t \exp\left[-\int_0^{\la t} \left(1 - \exp\left(-\int_0^s \theta(\tau) \frac{d\tau}\tau\right)\right)\frac{ds}s\right].
\]
Then $\s$ solves the Cauchy problem 
\begin{align}\label{ode}
- \frac{d}{dt}\operatorname{log} \left(\frac{t\s'}{\s}\right) = \frac{d}{dt}\operatorname{log} \left(\frac{\s}{t\s'}\right)=\frac{\theta(\lambda t)}{t},\;\;\;\;\s(0)=0,\;\;\s'(0)=1,
\end{align}
Moreover, there exist a universal constant $N>0$ such that when $0\le \lambda t \le 1/2$:
\begin{itemize}
\item[(i)] $t/N \le \s(t) \le t$;
\item[(ii)]  $1/N \le \s'(t) \le 1$.
\end{itemize}
\end{lemma}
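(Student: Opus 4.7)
The plan is to unwind the definition by setting $F(t) := \int_0^{\lambda t}\bigl(1-e^{-\Theta(s)}\bigr)\tfrac{ds}{s}$ with $\Theta(s) := \int_0^s \theta(\tau)\tfrac{d\tau}{\tau}$, so that $\sigma(t) = t\,e^{-F(t)}$. A direct differentiation then yields $F'(t) = (1 - e^{-\Theta(\lambda t)})/t$, and hence
\[
\sigma'(t) \;=\; e^{-F(t)}\bigl(1 - tF'(t)\bigr) \;=\; e^{-F(t)}\,e^{-\Theta(\lambda t)}.
\]
Dividing, one gets the clean identity $\dfrac{t\sigma'(t)}{\sigma(t)} = e^{-\Theta(\lambda t)}$, so taking logarithms and differentiating once more in $t$ produces
\[
\frac{d}{dt}\log\frac{t\sigma'(t)}{\sigma(t)} \;=\; -\,\frac{\theta(\lambda t)}{t},
\]
which is precisely the ODE in \eqref{ode}. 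The initial conditions $\sigma(0) = 0$ and $\sigma'(0) = e^{-F(0)}e^{-\Theta(0)} = 1$ are immediate from the continuity of $F$ and $\Theta$ at $0$, both of which vanish there.

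For the bounds (i) and (ii), note that the explicit expressions above reduce everything to showing
\[
0 \;\le\; F(t),\; \Theta(\lambda t) \;\le\; N \qquad \text{for } 0 \le \lambda t \le 1/2,
\]
for a universal constant $N$. Indeed, non-negativity of both quantities together with such an upper bound gives $e^{-N} \le e^{-F(t)},\, e^{-\Theta(\lambda t)} \le 1$, whence $t/N \le \sigma(t) \le t$ and $1/N \le \sigma'(t) \le 1$, which are exactly (i) and (ii). Non-negativity is obvious from $\theta \ge 0$, so the content is in the upper bound.

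The main (and really only) technical point is the integrability near $0$ of $\theta(\tau)/\tau = \tau^{-1/2}(\log 1/\tau)^{3/2}$. This is a textbook check (for instance, one substitutes $u = \sqrt\tau$ to reduce $\int_0^{1/2}\theta(\tau)d\tau/\tau$ to a multiple of $\int_0^{\sqrt{1/2}}(\log 1/u)^{3/2}\,du$, which is finite); this simultaneously gives $\Theta(s) \le N$ on $[0,1/2]$ and the asymptotic $\Theta(s) = O(\theta(s))$ as $s \to 0^+$. Using the elementary inequality $1 - e^{-x} \le x$, the integrand for $F$ is then majorised by $\Theta(s)/s \lesssim s^{-1/2}(\log 1/s)^{3/2}$, so $F(t) \le N$ uniformly. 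The only delicacy is that the hypothesis $\lambda t \le 1/2$ is precisely what makes these bounds independent of $\lambda$: both $F(t)$ and $\Theta(\lambda t)$ depend on $t$ and $\lambda$ only through the product $\lambda t$, which stays in $[0,1/2]$. The lemma follows.
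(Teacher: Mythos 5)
Your proof is correct. Note that the paper does not actually prove this lemma: it is imported verbatim from Escauriaza--Fern\'andez \cite[Lemmas 4 \& 5]{EF}, so there is no internal argument to compare against. Your verification is the natural (and essentially the only) one: the closed form $\sigma(t)=t e^{-F(t)}$ is built precisely so that $t\sigma'(t)/\sigma(t)=e^{-\Theta(\lambda t)}$, from which the ODE follows by one more differentiation, and the bounds (i)--(ii) reduce to the uniform boundedness of $F$ and $\Theta$ on $\{\lambda t\le 1/2\}$, which you correctly trace back to the integrability of $\theta(\tau)/\tau=\tau^{-1/2}(\log 1/\tau)^{3/2}$ near the origin and to the fact that both quantities depend on $(\lambda,t)$ only through the product $\lambda t$.
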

The following consequence of (i) and (ii) in Lemma \ref{sig} will play a crucial role in the proof of Lemma \ref{carl}
\begin{equation}\label{ic}
N^{-1} \le \frac{t\s'(t)}{\s(t)}\le N,\ \ \ \ \ \ \ \ \ \  0\le t\le \frac{1}{2\la}.
\end{equation} 
We also note that \eqref{ode} implies 
\begin{equation}\label{ode2}
- \frac{d}{dt}\left(\frac{t \s'(t)}{\s(t)}\right) = \frac{t \s'(t)}{\s(t)} \frac{\theta(\lambda t)}{t}.
\end{equation}
This identity will be critical in passing from \eqref{I23bis} to \eqref{I23f} below. We remark that, since $0\le \la t\le 1/2$, by our choice of $\theta$ we have $\theta(\la t) \le N$.
The next result is \cite[Lemma 5]{EF}. Since, as mentioned above, we have taken $\beta = 1$ in the definition \eqref{theta}, the same choice has been kept in the statement of Lemma \ref{logi} below. Also, the reader should notice that we have used $2\alpha$, instead of $\alpha$. This makes no difference in the application of the lemma itself. 
	
\begin{lemma}\label{logi}
Let $\theta(t)$ and $\s$ be as in Lemma \ref{sig}. For a given $\alpha >1$ and $\delta \in (0,1)$, let  $\lambda = \A/ \delta^2$. Then there exists a constant $N>0$, depending only on $n$, such that the following inequalities hold for all functions $w \in C_0^{\infty}(\R^n \times [0,1/2\lambda))$,
\begin{align*}
& \int_{\R^n \times [0,1/2\lambda)} \s^{-2\A}\left(\frac{|x|}{t}+\frac{|x|^3}{\A t^2}\right)w^2G dX\le N e^{2\A N}\lambda^{2\A+N}\int_{\R^n \times [0,1/2\lambda)} w^2dX
\\
& + N \delta \int_{\R^n \times [0,1/2\lambda)} \s^{-2\A}\frac{\theta(\lambda t)}{t}w^2 G dX,
\end{align*}
and 
\begin{align*}
& \int_{\R^n \times [0,\frac{1}{2\lambda})}	\s^{-2\A+1}\left(\frac{|x|}{t}+\frac{|x|^3}{\A t^2}+\frac{|x|^2}{\delta t}\right)|\n w|^2G dX 
\\
& \le N e^{2\A N} \lambda^{2\A+N}\int_{\R^n \times [0,\frac{1}{2\lambda})} t|\n w|^2dX + N \delta \int_{\R^n \times [0,\frac{1}{2\lambda})} \s^{-2\A+1}\frac{\theta(\lambda t)}{t}|\n w|^2 G dX.
\end{align*}
\end{lemma}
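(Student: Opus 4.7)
The plan is to prove both inequalities by exploiting Gaussian integration by parts together with the ODE characterisation of $\sigma$ from Lemma \ref{sig}. The two estimates have the same shape, so I would focus on the first inequality; the second follows from analogous manipulations, replacing $w^2$ by $|\nabla w|^2$ and using a Rellich--type calculation (cf.\ \eqref{rell}) to control the extra $\nabla w$ factors produced by spatial integrations by parts. The key Gaussian identities I would exploit are
\[
\nabla G = -\frac{x}{2t}\,G,\qquad \frac{|x|^2}{4t^2}\,G = G_t + \frac{n}{2t}\,G,
\]
which allow converting polynomial weights in $|x|/t$ into derivatives of $G$.

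The core mechanism behind the appearance of $\theta(\lambda t)/t$ on the right-hand side is the identity \eqref{ode2}, namely $-\tfrac{d}{dt}(t\sigma'/\sigma) = (t\sigma'/\sigma)\,\theta(\lambda t)/t$, combined with the two-sided bound \eqref{ic}. Integration by parts in $t$ against the singular weight $\sigma^{-2\alpha}$ produces $\partial_t(\sigma^{-2\alpha}) = -2\alpha\,\sigma^{-2\alpha-1}\sigma'$, and rewriting this factor via \eqref{ode2} generates precisely the weight $\sigma^{-2\alpha}\,\theta(\lambda t)/t$ that must appear on the right-hand side. This is the only place where the specific form of $\sigma$ enters, and everything else is a careful distribution of weights.

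The steps I would carry out in order are: (i) write $\frac{|x|^3}{\alpha t^2}w^2 G = -\frac{2|x|}{\alpha t}\,w^2\langle x,\nabla G\rangle$ and integrate by parts in $x$, obtaining lower-order terms with weight $\frac{|x|}{\alpha t}w^2 G$ plus a cross term $\frac{|x|^2}{\alpha t}\,w\,\langle x,\nabla w\rangle\,G$; (ii) handle the $\frac{|x|}{t}w^2 G$ pieces by Cauchy--Schwarz, $\frac{|x|}{t}\le \frac{1}{2\eta}\frac{|x|^2}{t} + \frac{\eta}{2t}$, which turns them into an $|x|^2/t$ weight (handled as in (i) via $\langle x,\nabla G\rangle$) and a $w^2/t$ weight (handled by integration by parts in $t$ using the heat identity for $G_t$); (iii) in every resulting time integration by parts, invoke the ODE step above to convert $\sigma^{-2\alpha-1}\sigma'$ into $\sigma^{-2\alpha}\theta(\lambda t)/t$ up to a bounded factor; (iv) apply Young's inequality with parameters proportional to appropriate powers of $\delta$ so that the $\sigma^{-2\alpha}\theta(\lambda t)/t$--weighted remainders are absorbed into the $N\delta$ term on the right-hand side; (v) bound the residual "bulk" terms by using that on $\operatorname{supp} w\subset \R^n\times[0,1/(2\lambda))$ we have $\sigma(t)^{-2\alpha}\lesssim \lambda^{2\alpha}$ by Lemma \ref{sig}(i), while the Gaussian tail combined with $t^{-n/2}\lesssim \lambda^{n/2}$ contributes the remaining $e^{2\alpha N}\lambda^{N}$.

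The main obstacle will be the bookkeeping: every integration by parts produces several cross and lower-order terms, each of which must be re-absorbed either into the $\theta$-weighted right-hand side (at cost $O(\delta)$) or into the coarse $\lambda^{2\alpha+N}\int w^2\,dX$ bound. The delicate point is calibrating the Young parameters so that the final coefficient on the $\sigma^{-2\alpha}\theta(\lambda t)/t$ term is genuinely $O(\delta)$ rather than $O(\delta^{-1})$; this typically forces an iterative absorption scheme in which one splits weights as $|x|^k/t^{k/2}\cdot t^{-k/2}$ and assigns one factor to $G$-based smoothing and the other to Young. Once this bookkeeping is in place, the gradient variant follows verbatim, with the Rellich identity handling the extra $\nabla w$ factors.
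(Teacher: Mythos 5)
Your proposal takes a much harder road than necessary and, more importantly, two of its steps do not work as described. First, the source of the factor $\delta$ in front of the $\theta$-weighted term is misidentified. The identity \eqref{ode2} governs $\frac{d}{dt}\big(\frac{t\sigma'}{\sigma}\big)$, not $\frac{d}{dt}\big(\sigma^{-2\alpha}\big)$: integrating by parts in $t$ against the weight $\sigma^{-2\alpha}$ produces $2\alpha\,\sigma^{-2\alpha}\sigma'/\sigma\approx 2\alpha\,\sigma^{-2\alpha}/t$, and since $\theta(\lambda t)\le N$ this exceeds $\sigma^{-2\alpha}\theta(\lambda t)/t$ by a factor of order $\alpha/\theta(\lambda t)$, so it cannot be absorbed into $N\delta\int\sigma^{-2\alpha}\frac{\theta(\lambda t)}{t}w^2G\,dX$. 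Nowhere in your scheme is there a mechanism that manufactures the small constant $\delta$; in the actual argument it comes from the arithmetic relation $\sqrt{\alpha}=\delta\sqrt{\lambda}$ combined with the explicit form $\theta(s)=s^{1/2}(\log\frac1s)^{3/2}$. Second, step (v) invokes $\sigma(t)^{-2\alpha}\lesssim\lambda^{2\alpha}$, which is backwards: Lemma \ref{sig}(i) gives $\sigma(t)\le t$, hence $\sigma(t)^{-2\alpha}\ge t^{-2\alpha}\ge(2\lambda)^{2\alpha}$, and the weight blows up as $t\to0^+$; it can only be tamed by spending part of the Gaussian, and only where $|x|^2/t$ is large.

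For the record, the paper does not reprove this lemma; it quotes \cite[Lemma 5]{EF}, whose proof is purely pointwise and needs no integration by parts. Split the domain at $|x|^2=C\alpha t\log\frac1{\lambda t}$. On $\{|x|^2\le C\alpha t\log\frac1{\lambda t}\}$ one checks directly that $\frac{|x|}{t}+\frac{|x|^3}{\alpha t^2}\le N\delta\,\frac{\theta(\lambda t)}{t}$, since $|x|/t\le N\sqrt{\alpha/t}\,(\log\frac1{\lambda t})^{1/2}=N\delta\sqrt{\lambda/t}\,(\log\frac1{\lambda t})^{1/2}$ while $\theta(\lambda t)/t=\sqrt{\lambda/t}\,(\log\frac1{\lambda t})^{3/2}$ and $\log\frac1{\lambda t}\ge\log2$ on the support of $w$ (the cubic term is handled the same way). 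On the complement, $e^{-|x|^2/8t}\le(\lambda t)^{C\alpha/8}$ beats $\sigma^{-2\alpha}\le N^{2\alpha}t^{-2\alpha}$, and the leftover Gaussian absorbs the polynomial weight and the factor $t^{-n/2}$, yielding the pointwise bound $Ne^{2\alpha N}\lambda^{2\alpha+N}$; the gradient inequality is identical with $w^2$ replaced by $|\nabla w|^2$ and one extra power of $\sigma\simeq t$. Your Rellich/integration-by-parts machinery is the right toolbox for the Carleman estimate of Theorem \ref{carleman}, but for this lemma it generates cross terms you cannot close.
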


The following integration by parts lemma will be useful.

\begin{lemma}\label{simple}
Let $\Psi\in C^1([0,\infty))$ with $\Psi\ge 0$, and $h\in C^1_0(\Rn)$. Then for any $\ve>0$ we have
\[
- \int_{\Rn} |x|^2 \left(\ve\Psi(|x|^2) + 2 \Psi'(|x|^2)\right) h^2 dx \le n \int_{\Rn} \Psi(|x|^2) h^2 dx + \frac 1{\ve} \int_{\Rn} \Psi(|x|^2) |\n h|^2 dx.
\]
\end{lemma}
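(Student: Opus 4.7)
The plan is to view the left-hand side as the result of an integration by parts against a vector field that produces both the $\Psi'$ term and a boundary/interior term involving $|\nabla h|^2$. Concretely, I would start from the divergence identity
\[
\operatorname{div}\bigl(x\,\Psi(|x|^2)\,h^2\bigr) = n\,\Psi(|x|^2)\,h^2 + 2|x|^2 \Psi'(|x|^2)\,h^2 + 2\,\Psi(|x|^2)\,h\,\langle x, \nabla h\rangle,
\]
which follows from $\operatorname{div}(x) = n$ and $\nabla(\Psi(|x|^2)) = 2x\,\Psi'(|x|^2)$. Integrating over $\Rn$ and using that $h\in C^1_0(\Rn)$ kills the left-hand side, so
\[
-2\int_{\Rn} |x|^2 \Psi'(|x|^2)\,h^2\,dx \;=\; n\int_{\Rn} \Psi(|x|^2)\,h^2\,dx \;+\; 2\int_{\Rn} \Psi(|x|^2)\,h\,\langle x, \nabla h\rangle\,dx.
\]

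Next I would absorb the cross term via Young's inequality with parameter $\ve>0$: since $\Psi\ge 0$, one has
\[
2\,\Psi(|x|^2)\,h\,\langle x, \nabla h\rangle \;\le\; \ve\,|x|^2\,\Psi(|x|^2)\,h^2 \;+\; \frac{1}{\ve}\,\Psi(|x|^2)\,|\nabla h|^2.
\]
Substituting this bound into the previous identity gives
\[
-2\int_{\Rn} |x|^2 \Psi'(|x|^2)\,h^2\,dx \;\le\; n\int_{\Rn} \Psi(|x|^2)\,h^2\,dx + \ve\int_{\Rn} |x|^2\,\Psi(|x|^2)\,h^2\,dx + \frac{1}{\ve}\int_{\Rn} \Psi(|x|^2)\,|\nabla h|^2\,dx.
\]
Moving the $\ve$-term to the left-hand side and factoring out $|x|^2$ yields the desired inequality.

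There is no real obstacle here; the only care needed is that $\Psi \ge 0$ so that the Young inequality can be applied with the weight $\Psi(|x|^2)$, and that the compact support of $h$ legitimates discarding the divergence contribution. The computation is purely algebraic once one recognises the divergence structure $\operatorname{div}(x\,\Psi(|x|^2)\,h^2)$ as the natural object that bundles together $n\Psi h^2$, the $|x|^2 \Psi' h^2$ quantity appearing on the left, and the gradient coupling to be treated by Young.
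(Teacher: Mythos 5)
Your proof is correct and follows essentially the same route as the paper: both arguments reduce to the identity
\[
-2\int_{\Rn}|x|^2\Psi'(|x|^2)h^2\,dx=n\int_{\Rn}\Psi(|x|^2)h^2\,dx+2\int_{\Rn}\Psi(|x|^2)h\langle x,\nabla h\rangle\,dx
\]
and then apply the weighted Young inequality $2AB\le\ve A^2+\ve^{-1}B^2$, for which $\Psi\ge 0$ is indeed the only point requiring care. The sole cosmetic difference is that you obtain the identity directly from the divergence theorem applied to $x\,\Psi(|x|^2)h^2$, whereas the paper derives it by slicing into spheres (Cavalieri's principle) and integrating by parts in the radial variable.
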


\begin{proof}
For every $r>0$ define $H(r) = \int_{S_r} h^2 d\sigma$. A simple calculation shows
\begin{equation}\label{H'}
H'(r) = \frac{n-1}r H(r) + \frac 2r \int_{S_r} h\sa \nabla h,x\da d\sigma.
\end{equation}
Cavalieri's principle, integration by parts and \eqref{H'}, give
\begin{align*}
& - 2 \int_{\Rn} |x|^2 \Psi'(|x|^2) h^2 dx =  - \int_0^\infty \frac{d}{dr} (\Psi(r^2)) r H(r) dr
\\
& = \int_0^\infty \Psi(r^2) \frac{d}{dr}(r H(r)) dr =  \int_0^\infty \Psi(r^2) \left[H(r) + r H'(r)\right] dr 
\\
& =  n \int_0^\infty \Psi(r^2) H(r) dr + 2 \int_0^\infty \Psi(r^2) \int_{S_r} h\sa \nabla h,x\da d\sigma dr
\\
& = n \int_{\Rn} \Psi(|x|^2) h^2 dx + 2 \int_{\Rn} \Psi(|x|^2) h \sa \nabla h,x\da dx
\\
& \le n \int_{\Rn} \Psi(|x|^2) h^2 dx + \ve \int_{\Rn} |x|^2 \Psi(|x|^2) h^2 dx + \frac 1{\ve} \int_{\Rn} \Psi(|x|^2) |\n h|^2 dx, 
\end{align*}
where in the last step we have used the simple numerical inequality $2AB\le \e A^2 + \e^{-1} B^2$. The desired conclusion follows.

\end{proof}

If for $a>0$ we choose $\Psi(t) = \frac 14 e^{-\frac{t}{4a}}$ in Lemma \ref{simple}, then with $\ve= \frac{1}{4a}$ we obtain the following inequality, see \cite[Lemma 3]{EFV}. We will need it to estimate certain spatial boundary integrals produced by a Lipschitz perturbation of the principal part of \eqref{meq}, see \eqref{bder} below. 

\begin{lemma}\label{logb}
For all $h \in C_0^{\infty}(\R^n)$ one has
$$\int_{\Rn} \frac{|x|^2}{8a}h^2 e^{-\frac{|x|^2}{4a}}dx \le 2a\int_{\Rn} |\n h|^2  e^{-\frac{|x|^2}{4a}} dx +\frac{n}{2}\int_{\Rn} h^2  e^{-\frac{|x|^2}{4a}} dx.$$
\end{lemma}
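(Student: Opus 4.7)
The plan is to obtain Lemma \ref{logb} as a direct specialization of Lemma \ref{simple}, using exactly the choice of $\Psi$ and $\varepsilon$ flagged in the paragraph preceding the statement: namely
\[
\Psi(t) = \tfrac14 e^{-t/(4a)}, \qquad \varepsilon = \tfrac{1}{4a}.
\]
First I would compute $\Psi'(t) = -\tfrac{1}{16a}e^{-t/(4a)}$, so that the weight in front of $|x|^2 h^2$ on the left-hand side of Lemma \ref{simple} becomes
\[
\varepsilon\Psi(t) + 2\Psi'(t) = \tfrac{1}{16a} e^{-t/(4a)} - \tfrac{1}{8a} e^{-t/(4a)} = -\tfrac{1}{16a} e^{-t/(4a)}.
\]
Substituting $t = |x|^2$ turns the left-hand side of Lemma \ref{simple} into $\tfrac{1}{16a}\int_{\Rn} |x|^2 h^2 e^{-|x|^2/(4a)} dx$, while the right-hand side becomes $\tfrac{n}{4}\int_{\Rn} h^2 e^{-|x|^2/(4a)} dx + a\int_{\Rn} |\nabla h|^2 e^{-|x|^2/(4a)} dx$, where I used $1/\varepsilon = 4a$ together with the prefactor $1/4$ from $\Psi$.

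Multiplying the resulting inequality through by $2$ recovers exactly the bound claimed in Lemma \ref{logb}. There is no genuine obstacle; the lemma is essentially a bookkeeping corollary of Lemma \ref{simple} recording the particular Gaussian-weighted Hardy-type inequality that will be needed later to control the spatial boundary terms produced by a Lipschitz perturbation of the principal part in the Carleman estimate argument. If anything, the only care required is to track the factor of $2$ between the $\tfrac{1}{16a}$ produced by the mechanical substitution and the $\tfrac{1}{8a}$ appearing in the stated conclusion.
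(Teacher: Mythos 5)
Your proposal is correct and follows exactly the route the paper itself indicates: specializing Lemma \ref{simple} with $\Psi(t)=\tfrac14 e^{-t/(4a)}$ and $\ve=\tfrac{1}{4a}$, then multiplying the resulting inequality by $2$. The arithmetic checks out ($\ve\Psi+2\Psi'=-\tfrac{1}{16a}e^{-t/(4a)}$, and $2\cdot\tfrac{1}{16a}=\tfrac{1}{8a}$), so nothing further is needed.
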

	
The following result is \cite[Lemma 4]{EFV}, and it follows from Lemma \ref{logb}. It will be used to obtain the quantitative  space-like doubling property in \eqref{doi} below.
	
\begin{lemma}\label{do}
Let $h \in C_{0}^{\infty}(\R^n)$. Assume that $N$ and $\Theta$ verify $N\log(N\Theta) \ge 1$, and that the following inequality holds for $a \le \frac{1}{12 N\log(N\Theta)}$
\begin{align*}
2a \int_{\Rn} |\n h|^2 e^{-\frac{|x|^2}{4a}} dx + \frac{n}{2}\int_{\Rn} h^2 e^{-\frac{|x|^2}{4a}} dx \le N\log(N\Theta)  \int_{\Rn} h^2  e^{-\frac{|x|^2}{4a}} dx.
\end{align*}
Then one has for $0 < r \le 1/2$ 
\begin{align}
\int_{B_{2r}}h^2 dx \le (N\Theta)^N  \int_{B_{r}}h^2 dx. 
\end{align}
\end{lemma}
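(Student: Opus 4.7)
The plan is to derive the doubling inequality by combining the hypothesis with Lemma \ref{logb} to obtain weighted control of $\frac{|x|^2}{8a}h^2 e^{-|x|^2/4a}$, then to use this control to localise the Gaussian-weighted $L^2$ mass of $h$ inside $B_r$, and finally to compare the Gaussian weight pointwise on $B_r$ and $B_{2r}$ to remove it. First, the right-hand side of Lemma \ref{logb} is precisely the left-hand side of the hypothesis, and its left-hand side is $\int_{\Rn}\frac{|x|^2}{8a} h^2 e^{-|x|^2/4a}\,dx$, so chaining the two inequalities immediately gives
$$
\int_{\Rn}\frac{|x|^2}{8a}\, h^2 e^{-|x|^2/4a}\, dx \;\le\; N\log(N\Theta)\int_{\Rn} h^2 e^{-|x|^2/4a}\, dx.
$$

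Next I would calibrate $a$ by setting $a = r^2/\bigl(16\, N\log(N\Theta)\bigr)$. The constraint $r\le 1/2$ then forces $a \le 1/(64\, N\log(N\Theta)) \le 1/(12\, N\log(N\Theta))$, so the hypothesis, and hence the combined inequality above, is applicable. On $\Rn\setminus B_r$ we have $|x|^2/(8a)\ge r^2/(8a) = 2\, N\log(N\Theta)$, and therefore
$$
2\, N\log(N\Theta)\int_{\Rn\setminus B_r} h^2 e^{-|x|^2/4a}\, dx \;\le\; \int_{\Rn}\frac{|x|^2}{8a}\, h^2 e^{-|x|^2/4a}\, dx \;\le\; N\log(N\Theta)\int_{\Rn} h^2 e^{-|x|^2/4a}\, dx.
$$
This forces the Gaussian-weighted mass of $h$ outside $B_r$ to be at most half of the total, which rearranges to
$$
\int_{B_{2r}} h^2 e^{-|x|^2/4a}\, dx \;\le\; \int_{\Rn} h^2 e^{-|x|^2/4a}\, dx \;\le\; 2\int_{B_r} h^2 e^{-|x|^2/4a}\, dx.
$$

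To close, I would remove the weight pointwise: on $B_{2r}$ the bound $|x|^2\le 4r^2$ yields $e^{-|x|^2/4a}\ge e^{-r^2/a} = (N\Theta)^{-16 N}$, while on $B_r$ we trivially have $e^{-|x|^2/4a}\le 1$. Substituting these pointwise bounds into the previous display and absorbing the numerical factors into a relabelled universal constant $N$ yields the claimed doubling bound $\int_{B_{2r}} h^2\, dx \le (N\Theta)^N \int_{B_r} h^2\, dx$. I do not anticipate a real obstacle here; the only delicate point is choosing the constant in the definition of $a$ so that simultaneously $a\le 1/(12\, N\log(N\Theta))$ and $r^2/(8a) > N\log(N\Theta)$, and the choice $a = r^2/(16\, N\log(N\Theta))$ balances both constraints.
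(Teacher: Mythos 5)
Your argument is correct and complete. The paper itself does not prove this lemma but simply cites it as \cite[Lemma 4]{EFV}, noting only that it "follows from Lemma \ref{logb}"; your proof supplies exactly the standard chain of reasoning behind that citation: combine the hypothesis with Lemma \ref{logb} to control $\int \frac{|x|^2}{8a}h^2e^{-|x|^2/4a}\,dx$, calibrate $a\sim r^2/(N\log(N\Theta))$ so that the tail $\Rn\setminus B_r$ carries at most half the Gaussian-weighted mass, and then remove the weight by the pointwise comparison $e^{-r^2/a}=(N\Theta)^{-16N}$ on $B_{2r}$ versus $e^{-|x|^2/4a}\le 1$ on $B_r$. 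All the numerical checks go through ($a\le 1/(64N\log(N\Theta))\le 1/(12N\log(N\Theta))$ for $r\le 1/2$, and $r^2/(8a)=2N\log(N\Theta)$ beats the right-hand side by a factor of $2$), and the final constant $2(N\Theta)^{16N}$ is absorbed into $(N\Theta)^N$ after relabelling, using $N\Theta>1$.
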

	
Finally, we record the following standard regularity estimate for solutions to \eqref{meq} which can be found in \cite[Chapter 6]{Li}.

\begin{lemma}\label{len}
Let $u$ be a solution of \eqref{meq} in $Q_4$. Then there exists a constant $D>0$, depending on $n$, $\Lambda$ in \eqref{ell}, and on the constant $M$ in \eqref{ass}, such that 
\begin{align}\label{eq:len}
||u||^2_{L^{\infty}(Q_2)}+||\n u||^2_{L^{\infty}(Q_2)} \le D (1+||V||_{\infty} +||b||^2_{\infty})|| u||^2_{L^2(Q_3)}.
\end{align}
\end{lemma}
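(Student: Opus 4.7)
The plan is to derive this as a consequence of standard parabolic regularity theory for equations with Lipschitz principal part, keeping careful track of the dependence on the lower-order coefficients. Since \eqref{meq} can be rewritten as
\[
u_t + \operatorname{div}(A(x,t)\nabla u) = -\langle b(x,t),\nabla u\rangle - V(x,t)u,
\]
and since the footnote tells us we may equivalently work with the forward equation obtained by reversing time, the result is essentially a classical local boundedness / gradient estimate. The strategy is a two-step procedure on nested cylinders $Q_2 \Subset Q_{5/2} \Subset Q_{11/4} \Subset Q_3$.

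\emph{Step 1 (local boundedness of $u$).} First I would run a Moser iteration argument for the subsolution inequality satisfied by $u^2$. Testing the equation against $\eta^2 u^{2k-1}$ with standard parabolic cut-offs $\eta$ supported in $Q_{11/4}$, using \eqref{ell} to absorb the good term $\int |\nabla(u^k)|^2$, and Cauchy--Schwarz / Young's inequality on the drift and potential terms, produces a reverse Hölder chain. The lower-order terms generate a factor of the form $(1 + \|V\|_\infty + \|b\|_\infty^2)$: the $\|b\|_\infty$ appears squared because after Cauchy's inequality $|b||u^{2k-1}||\nabla u|\le \delta|\nabla(u^k)|^2 + C\delta^{-1}\|b\|_\infty^2 u^{2k}$, whereas $V$ enters linearly. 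Iterating along a sequence of radii converging to $5/2$ in the standard De Giorgi--Moser fashion (see Lieberman, Chapter~6) yields
\[
\|u\|_{L^\infty(Q_{5/2})}^2 \le D_1(1+\|V\|_\infty+\|b\|_\infty^2)\|u\|_{L^2(Q_3)}^2.
\]

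\emph{Step 2 (local gradient estimate).} Once $u$ is bounded on $Q_{5/2}$, I would view the right-hand side $f\coloneqq -\langle b,\nabla u\rangle - Vu$ as a forcing term for the linear equation $u_t + \operatorname{div}(A\nabla u) = f$ with Lipschitz principal part $A$, see \eqref{ass}. A parabolic Caccioppoli inequality on a slightly smaller cylinder $Q_{9/4}$ gives $\|\nabla u\|_{L^2(Q_{9/4})}^2 \lesssim (1+\|V\|_\infty+\|b\|_\infty^2)\|u\|_{L^\infty(Q_{5/2})}^2$. Then I would invoke the standard interior $W^{1,\infty}$ estimate for parabolic equations with Lipschitz coefficients (for example through parabolic $L^p$ theory for $p$ large followed by Morrey embedding, or directly through the $C^{1,\alpha}$ Schauder-type estimates in Chapter~12 of Lieberman) on the cylinder $Q_2 \Subset Q_{9/4}$. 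This gives $\|\nabla u\|_{L^\infty(Q_2)} \le D_2(1+\|V\|_\infty + \|b\|_\infty)(\|u\|_{L^\infty(Q_{5/2})}+\|\nabla u\|_{L^2(Q_{9/4})})$.

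Combining Steps~1 and~2 and absorbing constants, I would obtain \eqref{eq:len} with a constant $D$ depending only on $n$, $\Lambda$ and $M$. The main bookkeeping obstacle is verifying that the coefficient of $\|u\|_{L^2(Q_3)}^2$ in the final estimate is at worst of the form $(1+\|V\|_\infty+\|b\|_\infty^2)$ and not a higher power; this forces the use of Young's inequality in the sharp form above and careful absorption of gradient terms at each stage of the Moser iteration and Caccioppoli argument. Since all the underlying regularity results are classical (and the conclusion is merely quoted from \cite{Li}), I would in fact simply cite Chapter~6 of \cite{Li} for the statement and point out how the quadratic dependence on $\|b\|_\infty$ and the linear dependence on $\|V\|_\infty$ emerge from the standard proofs there.
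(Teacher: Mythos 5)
The paper itself gives no proof of Lemma \ref{len}: the estimate is recorded as standard with a bare reference to \cite[Chapter 6]{Li}, so your overall route (Moser iteration for local boundedness, then an interior gradient estimate for the equation with Lipschitz principal part, ultimately deferring to \cite{Li}) is exactly the one the authors have in mind.

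The problem is the point you yourself flag as ``the main bookkeeping obstacle'': it is not an obstacle that can be overcome, and your two steps do not, in fact, produce the linear factor $(1+\|V\|_\infty+\|b\|_\infty^2)$. After squaring, your Step 2 multiplies the Step 1 constant by an additional $(1+\|V\|_\infty+\|b\|_\infty)^2$, and the $L^2\to L^\infty$ bound from parabolic Moser iteration itself carries a dimensional power of the structural constant (of order $(n+2)/2$, as one sees by rescaling $x\to\rho x$, $t\to\rho^2 t$ with $\rho=(1+\|V\|_\infty+\|b\|_\infty^2)^{-1/2}$ and covering $Q_2$ by cylinders of size $\rho$). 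Moreover no argument can deliver the linear power, because \eqref{eq:len} as stated is false for $n\ge 2$: take $b\equiv 0$, $V\equiv\lambda^2$ and the time-independent solution $u(x)=|x|^{-(n-2)/2}J_{(n-2)/2}(\lambda|x|)$ of \eqref{meq} (so $\Delta u=-\lambda^2 u$). Writing $\nu=(n-2)/2$ and $u(r)=\lambda^{\nu}g(\lambda r)$ with $g(s)=s^{-\nu}J_\nu(s)$, one has $\|\nabla u\|_{L^\infty(Q_2)}^2\sim\lambda^{n}$ (the maximum of $|u'|$ being of size $\lambda^{\nu+1}$, attained at $r\sim\lambda^{-1}$), while $\|u\|^2_{L^2(Q_3)}=9\,\omega_{n-1}\lambda^{-2}\int_0^{3\lambda}sJ_\nu(s)^2\,ds\sim\lambda^{-1}$; hence the left-hand side of \eqref{eq:len} is of order $\|V\|_\infty^{(n+1)/2}\|u\|^2_{L^2(Q_3)}$, which exceeds $D(1+\|V\|_\infty)\|u\|^2_{L^2(Q_3)}$ for $\lambda$ large. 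What your argument honestly yields, and what should be stated, is \eqref{eq:len} with $(1+\|V\|_\infty+\|b\|_\infty^2)$ replaced by a fixed dimensional power $(1+\|V\|_\infty+\|b\|_\infty^2)^{p_n}$. This weaker form is all the paper ever uses --- Lemma \ref{len} enters only in the proof of Lemma \ref{mon}, where this quantity appears inside a logarithm --- but you should not claim the first power.
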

	
 \vskip 0.2in

%%%%%%%%%%%%%%%%%%%%%%%%%%%%%%%%%%%%%%%%%%%%%%%%%%%%%%

\section{A quantitative Carleman estimate}\label{S:quantcarl}

This section is devoted to the proof of a quantitative $L^2$ Carleman estimate, Theorem \ref{carleman} below, which represents the most technical part of the present work. Our approach is new, and purely based on some carefully chosen vector fields in identities of Rellich type, see Propositions \ref{P:rellich} and \ref{P:vrellich} above. In particular, this allows to avoid spectral gap inequalities. The proof is quite long, but it is our hope that the reader's comprehension will be facilitated, rather than overburdened, by the illustration of the various critical passages. 
Combined with the  monotonicity-in-time result in Lemma \ref{mon} below, Theorem \ref{carleman} will lead to the desired control \eqref{df} of the vanishing order. 
It can be regarded as a quantitative version of the Carleman estimate in \cite[Lemma 6]{EFV}, with the delicate new feature being the asymptotic control \eqref{quanta} below of the parameter $\alpha$ (in the weight $\s_a^{2\alpha}$). It is this bound that captures the sharp dependence of the vanishing order of the solution on the $C^{1}$-norm of the lower-order terms (drift and potential). For the reader not fully familiar with the subtleties of the problem we mention that, as we have explained in the introduction, the linear dependence on $||b||_{1,1/2}$ in \eqref{df} is sharp. To achieve such optimal control we must delicately exploit some additional crucial cancellations that lead to the optimal linear dependence in \eqref{quanta}. Had we, instead, directly used the Carleman estimate in \cite[Lemma 6]{EFV}, we would have obtained a non-optimal quadratic dependence $||b||_{1,1/2}^2$ in \eqref{quanta}, and consequently a similar one in \eqref{df}.

 In what follows, with $\s$ defined as in Lemma \ref{sig}, and $G$ as in \eqref{G}, for any given number $a\in [0,1/2]$ we let $\s_a(t)=\s(t+a)$ and  $G_a(x,t)=G(x,t+a)$. One should keep in mind that the domain of $\s_a$ is $[-a,\frac 12 - a]$ and that $G_a$ is supported in the half-space $t>-a$. 
	
\begin{thrm}\label{carleman}
Let $A(0,0)=\mathbb I_n$. There exist universal constants $N >1$ and $\delta \in (0,1)$ such that, if 
\begin{equation}\label{quanta}
\A \ge N(1+||b||_{1,1/2}+||V||_{1,1/2}^{1/2}),
\end{equation}  
and $\lambda=\A/\delta^2$, then the following inequality holds for all $w \in C_{0}^{\infty}\left(B_4 \times [0,\frac{1}{4\lambda})\right)$ and $0<a\le \frac{1}{4\lambda}$	
\begin{align}\label{carl}
& \A^2 \int_{B_4 \times [0,\frac{1}{4\lambda}) }\s_a^{-2\A}w^2G_adX +\A \int_{B_4 \times [0,\frac{1}{4\lambda}) } \s_a^{1-2\A}|\n w|^2G_adX\\
& \le N \int_{B_4 \times [0,\frac{1}{4\lambda}) } \s_a^{1-2\A}\left[w_t + \D(A(x,t) \n w)+\langle b(x,t), \n w \rangle+V(x,t)w\right]^2 G_adX
\notag\\
& +N^{2\A}\A^{2\A}\underset{t \ge 0}{\operatorname{sup}}\int_{\Rn} \left[w^2
+|\n w|^2\right] dx
\notag\\
&+\s(a)^{-2\A}\left(-\frac{a}{N}\int_{\Rn} |\n w (x,0)|^2G(x,a)dx + N \A \int_{\Rn} w^2(x,0)G(x,a)dx\right).
\notag
\end{align}  
\end{thrm}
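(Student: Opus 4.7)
\medskip

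\noindent\textbf{Proof plan.} The overall strategy is to treat \eqref{carl} as a perturbation of the corresponding Carleman estimate for the constant-coefficient heat operator $\partial_t + \Delta$ (which is essentially the content of Lemma 6 in \cite{EFV}), and to carry out the perturbation argument in such a way that the drift and the potential contribute only linearly, resp.\ with a square root, to the bound on $\alpha$. The Gaussian weight $G_a$ and the time-rescaling $\sigma_a$ are designed precisely so that the time derivative of $\log(t\sigma'/\sigma)$ equals $\theta(\lambda t)/t$, cf.\ \eqref{ode2}, which is the mechanism by which a convexity-type gain of $\alpha^2 \int w^2 G_a + \alpha \int |\nabla w|^2 G_a$ appears after integration by parts. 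Concretely, I would conjugate by the weight, setting $v = \sigma_a^{-\alpha} G_a^{1/2} w$, split the conjugated operator into a symmetric and an antisymmetric part, and then compute $\int (L_0 w)\,\mathscr{Z} w\, \sigma_a^{1-2\alpha} G_a\, dX$ for a vector field $\mathscr{Z}$ of the form $\mathscr{Z} = c_1(t) Z + c_2(x,t)\partial_t$, where $Z$ is the parabolic generator from \eqref{defz}. The Rellich-type identities in Proposition \ref{P:rellich} and Proposition \ref{P:vrellich} then produce positive quadratic forms in $w$ and $\nabla w$ with the required $\alpha$-weight.

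\medskip

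\noindent The frozen-coefficient step is essentially \cite[Lemma 6]{EFV}, so I would next address the perturbation from the variable coefficients. Since $A(0,0)=\mathbb{I}_n$ and $A$ is Lipschitz by \eqref{ass}, one has $|A(x,t)-\mathbb{I}_n| \le M(|x|+t^{1/2})$, and the perturbation errors arising when one replaces $\Delta$ by $\operatorname{div}(A\nabla)$ are exactly of the type $|x|/t$, $|x|^3/(\alpha t^2)$ or $|x|^2/(\delta t)$ appearing in Lemma \ref{logi}. Choosing $\lambda = \alpha/\delta^2$ with $\delta$ small, these errors split into a part absorbed by the main positive terms on the left of \eqref{carl}, and a remainder of the form $Ne^{2\alpha N}\lambda^{2\alpha+N} \sup_t \int (w^2 + |\nabla w|^2)dx$, which yields the $N^{2\alpha}\alpha^{2\alpha}$ exponential term in the statement. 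The boundary contribution at $t=0$ produced by the integration by parts in time, evaluated against $G_a(x,0) = G(x,a)$ and $\sigma_a(0)=\sigma(a)$, gives precisely the last line of \eqref{carl}; the sign of the $|\nabla w|^2$ boundary term is favorable thanks to the Rellich identity, while the $w^2$ boundary term comes in with the loss $N\alpha$. Lemma \ref{logb} handles the spatial integration of the Gaussian at $t=0$.

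\medskip

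\noindent The most delicate ingredient is the treatment of the lower-order terms, and in particular the drift, which is where the argument must diverge from a direct application of \cite[Lemma 6]{EFV}. Writing $Lw = L_0 w + \langle b, \nabla w\rangle + V w$, the naive estimate $|Lw|^2 \le 2|L_0 w|^2 + 2|b|^2 |\nabla w|^2 + 2 V^2 w^2$ would force $\alpha \gtrsim \|b\|_\infty^2$, i.e.\ a quadratic dependence, which is not sharp. To obtain the linear threshold $\alpha \ge N\|b\|_{1,1/2}$, I would expand the square and integrate the cross term
\[
2\int \sigma_a^{1-2\alpha} L_0 w\,\langle b,\nabla w\rangle\, G_a\, dX
\]
by parts, moving the derivative off $\nabla w$ using the identity $2\Delta w\,\langle b,\nabla w\rangle = \operatorname{div}(\,|\nabla w|^2 b\,) - (\operatorname{div} b)|\nabla w|^2 - 2(\nabla b\cdot \nabla w)\cdot \nabla w$, together with its variable-coefficient analogue from Proposition \ref{P:vrellich}, and handling the $\partial_t w\,\langle b,\nabla w\rangle$ contribution by a time integration by parts that introduces $\partial_t b$. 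Because $\|b\|_{1,1/2}<\infty$ controls all of $\|b\|_\infty$, $\|\nabla b\|_\infty$ and the $1/2$-Hölder seminorm of $b$ in $t$, the resulting error is bounded linearly by $\|b\|_{1,1/2}\bigl(\alpha\int \sigma_a^{-2\alpha} w^2 G_a + \int \sigma_a^{1-2\alpha}|\nabla w|^2 G_a\bigr)$, which is absorbed by the main terms on the left as soon as $\alpha \ge N\|b\|_{1,1/2}$. The analogous cross term with $V w$ produces, after integration by parts in $t$ using $\partial_t V$ and in $x$ using $\nabla V$, an error linear in $\|V\|_{1,1/2}$, absorbed by $\alpha^2\int \sigma_a^{-2\alpha} w^2 G_a$ under $\alpha \ge N\|V\|_{1,1/2}^{1/2}$. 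Assembling the inequalities and choosing $N$ large and $\delta$ small yields \eqref{carl}. The main obstacle is clearly the drift step: it requires carrying out the cross-term integration by parts jointly with the variable-coefficient Rellich identity of Proposition \ref{P:vrellich} while still keeping the extra factor of $\alpha$ in front of the $|\nabla w|^2$ term on the left, which forbids any sloppy absorption.
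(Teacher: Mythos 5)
Your overall strategy is the right one and, at the level of ideas, matches the paper's: conjugate with the weight $\s_a^{-\A}G_a^{1/2}$, exploit the ODE \eqref{ode2} for $\s$ to generate the coercive terms, use the Rellich identities of Propositions \ref{P:rellich}--\ref{P:vrellich}, control the $|x|/t$-type errors with Lemma \ref{logi}, and -- crucially -- obtain the linear dependence on $||b||_{1,1/2}$ by integrating the cross term $\int \s_a^{1-2\A} L_0w\,\langle b,\n w\rangle G_a\,dX$ by parts rather than estimating $|\langle b,\n w\rangle|^2$ brutally. This last point is exactly the paper's main new mechanism: there the drift is grouped with the antisymmetric part from the outset, setting $\mathcal A=\frac{1}{2t}Z\w+\langle b,\n\w\rangle$, so that the only cross terms involving $b$ are $\langle b,\n\w\rangle\Delta\w$ (handled by Proposition \ref{P:rellich} with $\mathscr Z=b$) and $\langle b,\n\w\rangle$ against the zero-order coefficient, both of which require only \emph{spatial} integrations by parts.

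There is, however, a genuine gap in your treatment of the lower-order terms. You propose to handle the contribution $\partial_t w\,\langle b,\n w\rangle$ ``by a time integration by parts that introduces $\partial_t b$,'' and similarly to use $\partial_t V$ for the potential. But $b$ and $V$ are only assumed H\"older-$1/2$ in time: the norm $||b||_{1,1/2}$ controls $||b||_\infty$, $||\n_x b||_\infty$ and the $1/2$-H\"older seminorm in $t$, not $||\partial_t b||_\infty$, so this integration by parts is not available. The paper avoids the issue by a two-stage argument: it first proves the estimate with \emph{time-frozen} $b(x)$, $V(x)$ (Steps 1--3), where the grouping of the drift into $\mathcal A$ means no time derivative ever falls on $b$ or $V$; only at the very end (Step 4) are $b(x,0),V(x,0)$ replaced by $b(x,t),V(x,t)$, using the crude bounds $|b(x,t)-b(x,0)|^2\le ||b||_{1,1/2}^2\,t$ and $|V(x,t)-V(x,0)|^2\le ||V||_{1,1/2}^2\,t$, where the extra factor $t\le 1/(4\la)=\delta^2/(4\A)$ is what allows absorption under the linear threshold \eqref{quanta}. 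You should restructure your argument along these lines. Two further points you gloss over: (i) integrating the drift cross term by parts against the Gaussian produces, via $\n G=-\frac{x}{2t}G$, errors of the form $||b||_{1,1/2}\int\s^{-2\A}\frac{|x|}{t}w^2G\,dX$, which are \emph{not} of the clean form $||b||_{1,1/2}(\A\int\s^{-2\A}w^2G+\int\s^{1-2\A}|\n w|^2G)$ you claim and must be handled by Lemma \ref{logi} together with the $\theta(\la t)/t$ coercivity and a small $\delta$; (ii) the perturbation $\D((A(x,0)-A(x,t))\n w)$ used to pass to time-dependent leading coefficients involves $D^2w$, so one needs the auxiliary bound on $\int\s^{2-2\A}|D^2w|^2G\,dX$ (the paper's \eqref{a22}--\eqref{a25}), which your sketch does not account for.
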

	
\begin{proof}
For the reasons that we have explained above, the proof of this result is quite technical. To facilitate its presentation, we brake it into \textbf{Steps 1-4}, with \textbf{Steps 1 \& 2} being the most technical and novel parts:
\begin{itemize}
\item In \textbf{Step 1} we establish the Carleman estimate \eqref{carl} when $A(x,t) \equiv \mathbb I_n$ and the lower-order coefficients are time-independent, i.e., $b(x,t) =  b(x)$ and $V(x,t) = V(x)$. This part contains the core of the ideas, and consequently we have provided full details.
\item In \textbf{Step 2} we prove \eqref{carl} when $A(x,t)=A(x)$,  $b(x,t) =  b(x)$, $V(x,t) = V(x)$. As the reader will see, passing from the Laplacian to a time-independent, variable coefficient matrix, requires a considerable amount of technical work. 
\item In \textbf{Step 3} we prove \eqref{carl} when $A(x,t)$ is general, but the lower-order coefficients are still time-independent, i.e. $b(x,t) =  b(x)$, $V(x,t) = V(x)$. 
\item In \textbf{Step 4} we finally remove the restriction of being time-independent on $b$ and $V$.
\end{itemize}

\vskip 0.2in

\noindent \textbf{Step 1: $\boxed{A\equiv \mathbb I_n$,  $b(x,t) =  b(x)$, $V(x,t) = V(x)}.$} 
\\
We observe preliminarily that, if for a given $w \in C_{0}^{\infty}\left(B_4 \times [0,\frac{1}{4\lambda})\right)$, we make the change of variable $(x,t)\to (x,\tau)$ with $\tau = t + a$ in the two integrals on the set $B_4 \times [0,\frac{1}{4\lambda})$ in \eqref{carl}, then the domain of integration becomes $B_4 \times [a,a+\frac{1}{4\lambda})$, whereas the integrand now involves the functions $\s(t)$ in Lemma \ref{sig}, $G(x,t)$ in \eqref{G}, and $w_a(x,t)=w(x, t-a)$. Since $a + \frac 1{4\la} \le \frac 1{2\la}$, it is clear that, by renaming $w$ the function $w_a$, then (under the present assumptions on $A(x,t), b(x,t)$ and $V(x,t)$) \eqref{carl} is equivalent to proving that for every $w \in C_0^{\infty}\left(B_4 \times [a,\frac{1}{2 \lambda})\right)$, one has   
\begin{align}\label{carletto}
& \A^2 \int_{B_4 \times [a,\frac{1}{2\lambda}) }\s^{-2\A}w^2 G dX +\A \int_{B_4 \times [a,\frac{1}{2\lambda}) } \s^{1-2\A}|\n w|^2 G dX\\
& \le N \int_{B_4 \times [a,\frac{1}{2\lambda}) } \s^{1-2\A}\left[w_t + \Delta w+\langle b(x), \n w \rangle+V(x)w\right]^2 G dX
\notag\\
& +N^{2\A}\A^{2\A}\underset{t \ge a}{\operatorname{sup}}\int_{\Rn} \left[w^2
+|\n w|^2\right] dx
\notag\\
&+\s(a)^{-2\A}\left(-\frac{a}{N}\int_{\Rn} |\n w (x,a)|^2G(x,a)dx + N \A \int_{\Rn} w^2(x,a)G(x,a)dx\right).
\notag
\end{align} 
The reader should pay attention to the fact that, since we are denoting by $w$ the function $w_a$, the traces in the third integral in the right-hand side involve $w_a(x,a) = w(x,0)$, where $w$ is the original cut-off function supported in the time interval $[0,\frac{1}{4\lambda})$.
With this being said, to proceed with the proof of the Carleman estimate \eqref{carletto}, we perform an important conjugation, and rename  the function $w$ as follows
\[
w(x,t) = t^\beta \s(t)^{\A} \w(x,t) G(x,t)^{-1/2},
\]
or equivalently, 
\begin{equation}\label{tw}
\w(x,t)^2 = t^{-2\beta} \s(t)^{-2\A} w(x,t)^2 G(x,t),
\end{equation}
where the crucial parameter $\beta\not= 0$ will be suitably chosen subsequently (we will take $\beta = - \frac n4$, see \eqref{divzero2}, \eqref{beta} below). One might wonder why, in this conjugation, we are not just choosing $w(x,t) = \s(t)^{\A} \w(x,t) G(x,t)^{-1/2}$, as in the integrand in the left-hand side of \eqref{carletto}, but this seemingly more natural choice would not produce some delicate cancellations (e.g., see \eqref{I23} below) that are hidden in such estimate. With this being said, with $\w$ as in \eqref{tw},     
a standard computation gives
\begin{align*}
& w_t + \Delta w+\langle b(x), \n w \rangle+V(x)w = \left(\tilde w_t + \Delta \tilde w\right)t^\beta \s^\alpha G^{-1/2} + \tilde w\left[(t^\beta \s^\alpha G^{-1/2})_t + t^\beta \s^\alpha \Delta(G^{-1/2})\right]
\\
& + 2 t^\beta \s^\alpha \sa \n \tilde w,\n(G^{-1/2})\da+ t^\beta \s^\alpha \langle b(x), \n \tilde w \rangle G^{-1/2} + t^\beta \s^\alpha \tilde w \langle b(x), \n(G^{-1/2})\rangle  +t^\beta \s^\alpha G^{-1/2} V(x)\tilde w
\\
& = \bigg\{\tilde w_t + \Delta \tilde w 
+ \left(\alpha \frac{\s'}{\s} +\frac \beta{t}- \frac 12 \frac{G_t}{G} +\frac{\Delta(G^{-1/2})}{G^{-1/2}}\right)\tilde w - \sa\n \tilde w,\frac{\n G}{G}\da  - \frac 12 \tilde w \sa b(x),\frac{\n G}{G}\da \\
& + \langle b(x), \n \tilde w \rangle + V(x) \tilde w\bigg\} t^\beta \s^\alpha G^{-1/2}.
\end{align*}
Keeping \eqref{nG} in mind, we have
\[
\tilde w_t - \sa\n \tilde w,\frac{\n G}{G}\da = \frac{1}{2t} Z \tilde w,
\]
where $Z$ is defined by \eqref{defz}. 
Also, an application of the chain rule, combined with \eqref{nG} and \eqref{Gt}, gives
\[
\frac{\Delta(G^{-1/2})}{G^{-1/2}} - \frac 12 \frac{G_t}{G} = \frac 34 \frac{|\n G|^2}{G^2} - \frac{G_t}{G} = \frac{n}{2t} - \frac{|x|^2}{16 t^2}.
\]
We thus find
\begin{align*}
& \s^{1-2\alpha}\left(w_t + \Delta w+\langle b(x), \n w \rangle+V(x)w\right)^2 G = t^{2\beta} \s\ \bigg\{\frac{1}{2t} Z \tilde w + + \langle b(x), \n \tilde w \rangle
\\
&  + \Delta \tilde w + \left(\alpha \frac{\s'}{\s} + \frac{n+2\beta}{2t} - \frac{|x|^2}{16 t^2} + \frac{1}{4t} \sa b(x),x\da + V(x)\right) \tilde w\bigg\}^2
\\
& \ge N^{-1} t^{1+2\beta} \bigg\{\frac{1}{2t} Z \tilde w +  \langle b(x), \n \tilde w \rangle + \Delta \tilde w
\\
&   + \left(\alpha \frac{\s'}{\s} + \frac{n+2\beta}{2t} - \frac{|x|^2}{16 t^2} + \frac{1}{4t} \sa b(x),x\da + V(x)\right) \tilde w\bigg\}^2
\\
& \ge N^{-3/2} t^{1+2\beta} \left(\frac{t \s'}{\s}\right)^{-1/2} \bigg\{\frac{1}{2t} Z \tilde w +  \langle b(x), \n \tilde w \rangle
\\
&  + \Delta \tilde w + \left(\alpha \frac{\s'}{\s} + \frac{n+2\beta}{2t} - \frac{|x|^2}{16 t^2} + \frac{1}{4t} \sa b(x),x\da + V(x)\right) \tilde w\bigg\}^2,
\end{align*}
where in the first inequality in the right-hand side we have used (i) in Lemma \ref{sig}, which gives $t/N \le \s(t) \le t$ for $0\le t \le \frac{1}{2\la}$, whereas in the latter we have used \eqref{ic}. The reader might wonder now why we have forced the term $\left(\frac{t \s'}{\s}\right)^{-1/2}$ to appear. Later in the computations, after integrating by parts, we will need to extract a delicate coercivity from various terms. This is where the factor $\left(\frac{t \s'}{\s}\right)^{-1/2}$ will play a critical role, through the ODE \eqref{ode}. We need to introduce it now since, once we start splitting the right-hand side in various components, its present positivity will be lost. To continue estimating from below, we next set
\begin{equation}\label{A}
\mathcal A = \frac{1}{2t} Z \tilde w +  \langle b(x), \n \tilde w \rangle
\end{equation}
and 
\begin{equation}\label{B}
\mathcal B = \Delta \tilde w + \left(\alpha \frac{\s'}{\s} + \frac{n+2\beta}{2t} - \frac{|x|^2}{16 t^2} + \frac{1}{4t} \sa b(x),x\da + V(x)\right) \tilde w.
\end{equation}
The importance of  choosing $\mathcal A$ as in \eqref{A} will become evident in the forthcoming analysis. We now use the obvious inequality 
\[
\left(\mathcal A + \mathcal B\right)^2 \ge \mathcal A^2 + 2 \mathcal A \mathcal B,
\]
to obtain
\begin{align*}
& \int_{B_4 \times [a,\frac{1}{2\lambda}) } \s^{1-2\A}\left[w_t + \Delta w+\langle b(x), \n w \rangle+V(x)w\right]^2 G dX
\\
& \ge N^{-3/2} \int_{B_4 \times [a,\frac{1}{2\lambda}) } t^{1+2\beta} \left(\frac{t \s'}{\s}\right)^{-1/2} \left[\frac{1}{2t} Z \tilde w +  \langle b(x), \n \tilde w \rangle\right]^2 dX
\\
& + 2 N^{-3/2} \int_{B_4 \times [a,\frac{1}{2\lambda}) } t^{1+2\beta} \left(\frac{t \s'}{\s}\right)^{-1/2} \left[\frac{1}{2t} Z \tilde w +  \langle b(x), \n \tilde w \rangle\right]
\\
& \times \left[\Delta \tilde w + \left(\alpha \frac{\s'}{\s} + \frac{n+2\beta}{2t} - \frac{|x|^2}{16 t^2} + \frac{1}{4t} \sa b(x),x\da + V(x)\right) \tilde w\right] dX.
\end{align*}
We next split the right-hand side in the latter inequality into several pieces, and then treat each piece separately\footnote{To simplify the notation, we will from now on drop the reference to the domain of integration in all the integrals  supported in $B_4 \times [a, \frac{1}{2\lambda}]$}. For notational convenience, we move the universal constant $N^{-3/2}$ to the left-hand side of the above inequality, obtaining
\begin{align*}
& N^{3/2} \int \s^{1-2\A}\left[w_t + \Delta w+\langle b(x), \n w \rangle+V(x)w\right]^2 G dX
\\
& \ge  \int t^{1+2\beta} \left(\frac{t \s'}{\s}\right)^{-1/2} \left[\frac{1}{2t} Z \tilde w +  \langle b(x), \n \tilde w \rangle\right]^2 dX  +  \int t^{2\beta} \left(\frac{t \s'}{\s}\right)^{-1/2} Z \tilde w \Delta \w dX
\notag\\
& + \int t^{2\beta} \left(\frac{t \s'}{\s}\right)^{-1/2} \w Z\w \left(\alpha \frac{\s'}{\s} + \frac{n+2\beta}{2t} - \frac{|x|^2}{16 t^2} + \frac{1}{4t} \sa b(x),x\da + V(x)\right) dX
\notag\\
& + 2 \int t^{1+2\beta} \left(\frac{t \s'}{\s}\right)^{-1/2} \langle b(x), \n \tilde w \rangle \Delta \w dX
\notag \\
& + 2 \int t^{1+2\beta} \left(\frac{t \s'}{\s}\right)^{-1/2} \w \langle b(x), \n \tilde w \rangle \left(\alpha \frac{\s'}{\s} + \frac{n+2\beta}{2t} - \frac{|x|^2}{16 t^2} + \frac{1}{4t} \sa b(x),x\da + V(x)\right) dX
\\
& = \mathcal I_1 + \mathcal I_2 + \mathcal I_3 + \mathcal I_4 + \mathcal I_5.
\end{align*}
We begin by analysing $\mathcal I_3$, since it is in such term that we decide how to choose the value of $\beta$. Observe that 
\[
Z\left\{t^{2\beta}\left(\frac{n+2\beta}{2t} - \frac{|x|^2}{16 t^2}\right)\right\} = (4\beta- 2) \left\{t^{2\beta}\left(\frac{n+2\beta}{2t} - \frac{|x|^2}{16 t^2}\right)\right\}.
\]
This just follows from the fact that the function within curly brackets in the left-hand side is homogeneous of degree $\kappa = 4\beta- 2$ with respect to the parabolic dilations. It thus ensues from \eqref{divzero} that in $\Rn\times [a,\infty)$
\begin{equation}\label{divzero2}
\operatorname{div}\left\{t^{2\beta}\left(\frac{n+2\beta}{2t} - \frac{|x|^2}{16 t^2}\right)Z \right\} = 0,
\end{equation}
provided that
\begin{equation}\label{beta}
4\beta- 2 + n + 2 = 0\ \Longleftrightarrow\ \beta = - \frac n4.
\end{equation}
Henceforth, with $\beta$ fixed as in \eqref{beta}, we let $\mathscr Z = t^{2\beta}\left(\frac{n+2\beta}{2t} - \frac{|x|^2}{16 t^2}\right) Z$, so that \eqref{divzero2} gives $\operatorname{div} \mathscr Z = 0$. With this choice, by an application of \eqref{ibp} with $\Om = B_4 \times (a,\frac{1}{2 \lambda})$, 
 $g = \w^2/2$ and $f = \left(\frac{t \s'}{\s}\right)^{-1/2}$, we obtain for 
 the following terms in the second integral in the right-hand side of the above inequality
\begin{align*}
& \int t^{2\beta} \left(\frac{t \s'}{\s}\right)^{-1/2} \w Z\w \left(\frac{n+2\beta}{2t} - \frac{|x|^2}{16 t^2}\right) dX 
 = \frac 12 \int  \left(\frac{t \s'}{\s}\right)^{-1/2} \mathscr Z \w^2 dX
\\
& =  \int_{B_4\times\{a\}} \w^2   t^{-\frac n2+1}\left(\frac{|x|^2}{16 t^2} - \frac{n}{4t}\right) \left(\frac{t \s'}{\s}\right)^{-1/2} dx
\\
& +  \int   \w^2 \ t^{-\frac n2 + 1}\left(\frac{|x|^2}{16 t^2} - \frac{n}{4t}\right) \frac{d}{dt} \left(\frac{t \s'}{\s}\right)^{-1/2} dX.
\end{align*}
Next, with the following different choice of $\mathscr Z = t^{-\frac n2 -1} Z$, we have from \eqref{divzero} and \eqref{ibp}, applied with $g = \w^2$ and $f = \left(\frac{t \s'}{\s}\right)^{1/2}$, 
\begin{align*}
& \int t^{2\beta} \left(\frac{t \s'}{\s}\right)^{-1/2} \w Z\w\ \alpha \frac{\s'}{\s} dX = \frac{\alpha}2 \int \left(\frac{t \s'}{\s}\right)^{1/2} \mathscr Z \w^2 dX
\\
& = - \alpha \int_{B_4\times\{a\}} \w^2 t^{-\frac n2} \left(\frac{t \s'}{\s}\right)^{1/2}  dx - \alpha \int \w^2 t^{-\frac n2} \frac{d}{dt} \left(\frac{t \s'}{\s}\right)^{1/2} dX.
\end{align*}
Combining terms, we thus find
\begin{align}\label{I3}
\mathcal I_3 = & \int t^{- \frac n2} \left(\frac{t \s'}{\s}\right)^{-1/2} \w Z\w \left(\frac{1}{4t} \sa b(x),x\da + V(x)\right) dX
\\
& +  \int   \w^2 \ t^{-\frac n2}\left(\frac{|x|^2}{16 t} - \frac{n}{4}\right) \frac{d}{dt} \left(\frac{t \s'}{\s}\right)^{-1/2} dX - \alpha \int  \w^2\ t^{-\frac n2} \frac{d}{dt} \left(\frac{t \s'}{\s}\right)^{1/2} dX
\notag\\
& +  \int_{B_4\times\{a\}} \w^2   t^{-\frac n2}\left(\frac{|x|^2}{16 t} - \frac{n}{4}\right) \left(\frac{t \s'}{\s}\right)^{-1/2} dx
 - \alpha \int_{B_4\times\{a\}} \w^2 t^{-\frac n2} \left(\frac{t \s'}{\s}\right)^{1/2}  dx.
\notag
\end{align}
Next, using \eqref{rell} we obtain
\begin{align*}
\mathcal I_2 & = \int t^{-\frac n2} \left(\frac{t \s'}{\s}\right)^{-1/2} \sa x,\n \tilde w\da \Delta \w dX + 2 \int t^{-\frac n2 +1} \left(\frac{t \s'}{\s}\right)^{-1/2}  \tilde w_t \Delta \w dX
\\
& = \left(\frac n2 - 1\right) \int t^{-\frac n2} \left(\frac{t \s'}{\s}\right)^{-1/2} |\n \w|^2 dX -  \int t^{-\frac n2 +1} \left(\frac{t \s'}{\s}\right)^{-1/2} \frac{d}{dt}(|\n \w|^2) dX
\\
& = \left(\frac n2 - 1\right) \int t^{-\frac n2} \left(\frac{t \s'}{\s}\right)^{-1/2} |\n \w|^2 dX +  \int \frac{d}{dt}(t^{-\frac n2 +1}) \left(\frac{t \s'}{\s}\right)^{-1/2} |\n \w|^2 dX 
\\
& + \int t^{-\frac n2 +1} \frac{d}{dt}\left(\frac{t \s'}{\s}\right)^{-1/2} |\n \w|^2 dX + \int_{B_4\times\{a\}} t^{-\frac n2 +1} \left(\frac{t \s'}{\s}\right)^{-1/2} |\n \w|^2 dx
\\
& = \int t^{-\frac n2 +1} \frac{d}{dt}\left(\frac{t \s'}{\s}\right)^{-1/2} |\n \w|^2 dX + \int_{B_4\times\{a\}} t^{-\frac n2 +1} \left(\frac{t \s'}{\s}\right)^{-1/2} |\n \w|^2 dx.
\end{align*}
Next, we apply Proposition \ref{P:rellich}, with the choice $\mathscr Z(x) = \sum_{k=1}^n b_k(x) D_k$, to find 
\begin{align*}
|\mathcal I_4| & =  \bigg|2 \int t^{-\frac n2 + 1} \left(\frac{t \s'}{\s}\right)^{-1/2} \langle b(x), \n \tilde w \rangle \Delta \w dX\bigg| \le \bigg|\int t^{-\frac n2 + 1} \left(\frac{t \s'}{\s}\right)^{-1/2} \operatorname{div} b\ |\n \w|^2 dX\bigg|
\\
& + 2 \int t^{-\frac n2 + 1} \left(\frac{t \s'}{\s}\right)^{-1/2} \big(\sum_{j,k=1} (D_j b_k)^2\big)^{1/2}  |\n \w|^2 dX
\\
& \le 3 ||b||_{1,1/2} \int t^{-\frac n2 + 1} \left(\frac{t \s'}{\s}\right)^{-1/2} |\n \w|^2 dX.
\end{align*}
We infer
\begin{align*}
\mathcal I_4 & \ge - 3 ||b||_{1,1/2} \int t^{-\frac n2 + 1} \left(\frac{t \s'}{\s}\right)^{-1/2} |\n \w|^2 dX.
\end{align*}
Finally, we analyse 
\begin{align*}
\mathcal I_5 & =  \int t^{-\frac n2+1} \left(\frac{t \s'}{\s}\right)^{-1/2} \langle b(x), \n(\tilde w^2) \rangle \left(\alpha \frac{\s'}{\s} + \frac{n}{4t} - \frac{|x|^2}{16 t^2} + \frac{1}{4t} \sa b(x),x\da + V(x)\right) dX
\\
& = - \alpha \int \w^2  t^{-\frac n2} \left(\frac{t \s'}{\s}\right)^{1/2} \operatorname{div} b\  dX - \frac n4 \int \w^2  t^{-\frac n2} \left(\frac{t \s'}{\s}\right)^{-1/2} \operatorname{div} b\  dX
\\
& + \frac{1}{16} \int \w^2  t^{-\frac n2-1} \left(\frac{t \s'}{\s}\right)^{-1/2} \big(|x|^2 \operatorname{div} b + 2\sa x,b\da\big)\  dX
\\
& - \frac{1}{4} \int \w^2  t^{-\frac n2} \left(\frac{t \s'}{\s}\right)^{-1/2} \big(|b|^2 + x_k b_j D_j b_k\big)  dX - \int \w^2  t^{-\frac n2+1} \left(\frac{t \s'}{\s}\right)^{-1/2} \big(V \operatorname{div} b + \sa b,\n V\da\big)  dX.
\end{align*}
This gives for some universal $C>0$
\begin{align*}
\mathcal I_5 & \ge - \alpha  ||b||_{1,1/2} \int \w^2  t^{-\frac n2} \left(\frac{t \s'}{\s}\right)^{1/2} dX - \frac n4 ||b||_{1,1/2} \int \w^2  t^{-\frac n2} \left(\frac{t \s'}{\s}\right)^{-1/2} dX
\\
& - C ||b||_{1,1/2} \int \w^2 t^{-\frac n2} \frac{|x|}t  \left(\frac{t \s'}{\s}\right)^{-1/2} dX - C ||b||_{1,1/2}^2 \int \w^2  t^{-\frac n2} \left(\frac{t \s'}{\s}\right)^{-1/2}  dX
\\
& - C ||V||_{1,1/2} ||b||_{1,1/2} \int \w^2  t^{-\frac n2+1} \left(\frac{t \s'}{\s}\right)^{-1/2}  dX.
\end{align*}
To proceed with the proof of \eqref{carl}, we next undo the conjugation \eqref{vtw}. Keeping \eqref{beta} in mind, we observe that
\begin{equation}\label{nw}
|\n \w|^2 = t^{\frac n2} \s^{-2\A} G \left\{|\n w|^2 + w^2 \frac{|x|^2}{16t^2} - \frac{1}{4t} \sa x,\n w^2\da\right\},
\end{equation}
and use this identity to express in terms of $|\n w|^2$ the integrals in $\mathcal I_2$ and $\mathcal I_4$ in which $|\n \w|^2$ appears. We begin with the former
\begin{align*}
\mathcal I_2 & =  \int t^{-\frac n2 +1} \frac{d}{dt}\left(\frac{t \s'}{\s}\right)^{-1/2} |\n \w|^2 dX + \int_{B_4\times\{a\}} t^{-\frac n2 +1} \left(\frac{t \s'}{\s}\right)^{-1/2} |\n \w|^2 dx
\\
& = \int t \s^{-2\A} \frac{d}{dt}\left(\frac{t \s'}{\s}\right)^{-1/2}  |\n w|^2 G  dX + \int \s^{-2\A} \frac{d}{dt}\left(\frac{t \s'}{\s}\right)^{-1/2}  w^2 \frac{|x|^2}{16t} G dX
\\
& - \frac{1}{4} \int  \s^{-2\A} \frac{d}{dt}\left(\frac{t \s'}{\s}\right)^{-1/2} \sa x,\n w^2\da G dX + \int_{B_4\times\{a\}} t \s^{-2\A}  \left(\frac{t \s'}{\s}\right)^{-1/2} |\n w|^2 G dx
\\
& + \int_{B_4\times\{a\}} \s^{-2\A}  \left(\frac{t \s'}{\s}\right)^{-1/2} w^2 \frac{|x|^2}{16t} G dx - \frac{1}{4} \int_{B_4\times\{a\}}  \s^{-2\A} \left(\frac{t \s'}{\s}\right)^{-1/2} \sa x,\n w^2\da G dx.
\end{align*}
Integrating in $x\in B_4$, we obtain 
\begin{align}\label{tricky}
& - \frac{1}{4} \int  \sa x,\n w^2\da G dx = \frac{n}{4} \int  w^2 G dx + \frac{1}{4} \int w^2 \sa x,\n G\da dx
\\
& = \frac{n}{4} \int  w^2 G dx - \int w^2 \frac{|x|^2}{8t}  G dx.
\notag
\end{align}
Substituting \eqref{tricky} in the above expression of $\mathcal I_2$, we find
\begin{align}\label{I2}
\mathcal I_2 & = \int t \s^{-2\A} \frac{d}{dt}\left(\frac{t \s'}{\s}\right)^{-1/2}  |\n w|^2 G  dX - \int \s^{-2\A} \frac{d}{dt}\left(\frac{t \s'}{\s}\right)^{-1/2}  w^2 \left(\frac{|x|^2}{16t} -\frac n4\right)G dX
\\
& + \int_{B_4\times\{a\}} t \s^{-2\A}  \left(\frac{t \s'}{\s}\right)^{-1/2} |\n w|^2 G dx - \int_{B_4\times\{a\}} \s^{-2\A} \left(\frac{t \s'}{\s}\right)^{-1/2}   \left(\frac{|x|^2}{16t} -\frac n4\right) w^2 G dx.
\notag
\end{align}
From \eqref{tw} and \eqref{I3} we have
\begin{align}\label{I33}
\mathcal I_3 & = \frac 12 \int t^{- \frac n2} \left(\frac{t \s'}{\s}\right)^{-1/2} Z(\w^2) \left(\frac{1}{4t} \sa b(x),x\da + V(x)\right) dX
\\
& +  \int  \s^{-2\A} \frac{d}{dt} \left(\frac{t \s'}{\s}\right)^{-1/2} w^2  \left(\frac{|x|^2}{16 t} - \frac{n}{4}\right)  G dX - \alpha \int \s^{-2\A}  \frac{d}{dt} \left(\frac{t \s'}{\s}\right)^{1/2} w^2 G dX
\notag\\
& +  \int_{B_4\times\{a\}} \s^{-2\A}  t^{-\frac n2}\left(\frac{|x|^2}{16 t} - \frac{n}{4}\right) \left(\frac{t \s'}{\s}\right)^{-1/2} w^2 G dx
 - \alpha \int_{B_4\times\{a\}} \s^{-2\A}  t^{-\frac n2} \left(\frac{t \s'}{\s}\right)^{1/2} w^2 G dx.
\notag
\end{align}
Combining \eqref{I2} with \eqref{I33}, after two crucial cancellations, we obtain
\begin{align}\label{I23}
\mathcal I_2 + \mathcal I_3 & = \int t \s^{-2\A} \frac{d}{dt}\left(\frac{t \s'}{\s}\right)^{-1/2}  |\n w|^2 G  dX - \alpha \int \s^{-2\A}  \frac{d}{dt} \left(\frac{t \s'}{\s}\right)^{1/2} w^2 G dX
\\
& + \int_{B_4\times\{a\}} t \s^{-2\A}  \left(\frac{t \s'}{\s}\right)^{-1/2} |\n w|^2 G dx - \alpha \int_{B_4\times\{a\}} \s^{-2\A}  t^{-\frac n2} \left(\frac{t \s'}{\s}\right)^{1/2} w^2 G dx
\notag
\\
& + \frac 12 \int t^{- \frac n2} \left(\frac{t \s'}{\s}\right)^{-1/2} Z(\w^2) \left(\frac{1}{4t} \sa b(x),x\da + V(x)\right) dX.
\notag
\end{align}

In a similar fashion, using \eqref{tricky} again, we obtain  
\begin{align}\label{I4}
\mathcal I_4 & \ge - 3 ||b||_{1,1/2} \int t \s^{-2\A} \left(\frac{t \s'}{\s}\right)^{-1/2} |\n w|^2 G dX - \frac{3n}4 ||b||_{1,1/2} \int  \s^{-2\A} \left(\frac{t \s'}{\s}\right)^{-1/2}  w^2 G dX,
\end{align}
and also 
\begin{align}\label{I5}
\mathcal I_5 & \ge - \alpha  ||b||_{1,1/2} \int \s^{-2\A} \left(\frac{t \s'}{\s}\right)^{1/2} w^2 G dX - \frac n4 ||b||_{1,1/2} \int \s^{-2\A} \left(\frac{t \s'}{\s}\right)^{-1/2} w^2 G dX
\\
& - C ||b||_{1,1/2} \int \s^{-2\A} \frac{|x|}t  \left(\frac{t \s'}{\s}\right)^{-1/2} w^2 G dX - C ||b||_{1,1/2}^2 \int \s^{-2\A} \left(\frac{t \s'}{\s}\right)^{-1/2} w^2 G  dX
\notag
\\
& - C ||V||_{1,1/2} ||b||_{1,1/2} \int \s^{-2\A}  t \left(\frac{t \s'}{\s}\right)^{-1/2} w^2 G  dX.
\notag
\end{align}
At this point, we are left with unraveling the last term, which still involves $\w$, in the right-hand side of \eqref{I23} above. Applying \eqref{ibp} with $g = \w^2$, $f = \left(\frac{t \s'}{\s}\right)^{-1/2} \sa b,x\da$, $\mathscr Z = t^{-\frac n2-1} Z$, for which $\operatorname{div}\mathscr Z = 0$ (see \eqref{divzero}), and then using \eqref{tw}, we find
\begin{align*}
& \frac 12 \int t^{- \frac n2} \left(\frac{t \s'}{\s}\right)^{-1/2} Z(\w^2) \frac{1}{4t} \sa b(x),x\da  dX = \frac 18 \int \left(\frac{t \s'}{\s}\right)^{-1/2} \sa b,x\da \mathscr Z(\w^2) dX
\\
& = - \frac 14 \int_{B_4\times\{a\}}  \s^{-2\A} \left(\frac{t \s'}{\s}\right)^{-1/2} \sa b,x\da w^2 G dx - \frac 14 \int  \s^{-2\A} \frac{d}{dt}\left(\frac{t \s'}{\s}\right)^{-1/2} \sa b,x\da w^2 G dX
\\
& - \frac 18 \int  \s^{-2\A} \left(\frac{t \s'}{\s}\right)^{-1/2} \frac{D_i b_j x_i x_j + \sa b,x\da}{t} w^2 G dX 
\\
& \ge  - C ||b||_{1,1/2} \int_{B_4\times\{a\}}  \s^{-2\A} \left(\frac{t \s'}{\s}\right)^{-1/2} w^2 G dx - \frac 14 \int  \s^{-2\A} \frac{d}{dt}\left(\frac{t \s'}{\s}\right)^{-1/2} \sa b,x\da w^2 G dX
\\
& - C ||b||_{1,1/2} \int \s^{-2\A} \frac{|x|}t  \left(\frac{t \s'}{\s}\right)^{-1/2} w^2 G dX,
\end{align*}
for a universal $C>0$. In a similar fashion, the part of the integral involving $V(x)$ gives instead
\begin{align*}
& \frac 12 \int t^{- \frac n2} \left(\frac{t \s'}{\s}\right)^{-1/2} Z(\w^2) V(x) dX \ge - C ||V||_{1,1/2} \int_{B_4\times\{a\}} t \s^{-2\A} \left(\frac{t \s'}{\s}\right)^{-1/2} V w^2 G dx
\\
& - \int t \s^{-2\A} \frac{d}{dt}\left(\frac{t \s'}{\s}\right)^{-1/2} V w^2 G dX -C ||V||_{1,1/2} \int t \s^{-2\A} \left(\frac{t \s'}{\s}\right)^{-1/2} w^2 G dX. 
\end{align*}
Substituting the latter two estimates in \eqref{I23}, we obtain
\begin{align}\label{I23bis}
\mathcal I_2 + \mathcal I_3 & \ge \int t \s^{-2\A} \frac{d}{dt}\left(\frac{t \s'}{\s}\right)^{-1/2}  |\n w|^2 G  dX - \alpha \int \s^{-2\A}  \frac{d}{dt} \left(\frac{t \s'}{\s}\right)^{1/2} w^2 G dX
\\
& - \frac 14 \int  \s^{-2\A} \frac{d}{dt}\left(\frac{t \s'}{\s}\right)^{-1/2} \sa b,x\da w^2 G dX - \int t \s^{-2\A} \frac{d}{dt}\left(\frac{t \s'}{\s}\right)^{-1/2} V w^2 G dX
\notag\\
& - C ||b||_{1,1/2} \int \s^{-2\A} \frac{|x|}t  \left(\frac{t \s'}{\s}\right)^{-1/2} w^2 G dX -C ||V||_{1,1/2} \int t \s^{-2\A} \left(\frac{t \s'}{\s}\right)^{-1/2} w^2 G dX
\notag\\
& + \int_{B_4\times\{a\}} t \s^{-2\A}  \left(\frac{t \s'}{\s}\right)^{-1/2} |\n w|^2 G dx - \alpha \int_{B_4\times\{a\}} \s^{-2\A} \left(\frac{t \s'}{\s}\right)^{1/2} w^2 G dx
\notag
\\
&   - C ||b||_{1,1/2} \int_{B_4\times\{a\}}  \s^{-2\A} \left(\frac{t \s'}{\s}\right)^{-1/2} w^2 G dx 
  - C ||V||_{1,1/2} \int_{B_4\times\{a\}} t \s^{-2\A} \left(\frac{t \s'}{\s}\right)^{-1/2}  w^2 G dx. 
\notag
\end{align}
To proceed with the proof of \eqref{carletto}, we use in a crucial way the differential equation \eqref{ode2}, which we substitute in \eqref{I23bis}, obtaining 
\begin{align}\label{I23f}
\mathcal I_2 + \mathcal I_3 & \ge \frac12 \int t \s^{-2\A} \left(\frac{t \s'}{\s}\right)^{-1/2} \frac{\theta(\lambda t)}{t} |\n w|^2 G  dX + \frac{\alpha}{2} \int \s^{-2\A}  \left(\frac{t \s'}{\s}\right)^{1/2} \frac{\theta(\lambda t)}{t} w^2 G dX
\\
& - \frac 18 \int  \s^{-2\A} \left(\frac{t \s'}{\s}\right)^{-1/2} \frac{\theta(\lambda t)}{t} \sa b,x\da w^2 G dX - \frac 12 \int t \s^{-2\A} \left(\frac{t \s'}{\s}\right)^{-1/2} \frac{\theta(\lambda t)}{t} V w^2 G dX
\notag\\
& - C ||b||_{1,1/2} \int \s^{-2\A} \frac{|x|}t  \left(\frac{t \s'}{\s}\right)^{-1/2} w^2 G dX -C ||V||_{1,1/2} \int t \s^{-2\A} \left(\frac{t \s'}{\s}\right)^{-1/2} w^2 G dX
\notag\\
& + \int_{B_4\times\{a\}} t \s^{-2\A}  \left(\frac{t \s'}{\s}\right)^{-1/2} |\n w|^2 G dx - \alpha \int_{B_4\times\{a\}} \s^{-2\A} \left(\frac{t \s'}{\s}\right)^{1/2} w^2 G dx
\notag
\\
&   - C ||b||_{1,1/2} \int_{B_4\times\{a\}}  \s^{-2\A} \left(\frac{t \s'}{\s}\right)^{-1/2} w^2 G dx 
- C ||V||_{1,1/2} \int_{B_4\times\{a\}} t \s^{-2\A} \left(\frac{t \s'}{\s}\right)^{-1/2}  w^2 G dx. 
\notag
\end{align}
We now combine \eqref{I4} and \eqref{I5} with \eqref{I23f}. We also use the bounds \eqref{ic}, to eliminate the weights $\left(\frac{t\s'}{\s}\right)^{\mp 1/2}$ from all the integrals involved. We thus find
\begin{align}\label{c1}
%\mathcal I_2 + \mathcal I_3 + \mathcal I_4 + \mathcal I_5
 &N^{3/2} \int \s^{1-2\A}\left[w_t + \Delta w+\langle b(x), \n w \rangle+V(x)w\right]^2 G dX
 \\
& \ge \int \left(\frac{\theta(\lambda t)}{Nt} -N ||b||_{1,1/2}\right) t \s^{-2\A}  |\n w|^2 G  dX
\notag
\\
& + \int \left\{\A \frac{\theta(\lambda t)}{Nt} -N\left(1+\A+ ||b||_{1,1/2} \right) ||b||_{1,1/2}\right\} \s^{-2\A}   w^2 G dX
\notag
\\
& - N ||b||_{1,1/2} \int  \left(\theta (\lambda t) + 1 \right) \s^{-2\A} \frac{|x|}{t}  w^2 G dX
\notag
\\
& - N ||V||_{1,1/2} \int  (\theta (\lambda t)+1)   \s^{-2\A}   w^2 G dX- N  ||b||_{1,1/2} ||V||_{1,1/2} \int  t \s^{-2\A}   w^2 G dX
\notag
\\
&+ \frac{1}{N} \int_{B_4\times\{a\}} t \s^{-2\A}   |\n w|^2 G dx - N \int_{B_4\times\{a\}}(\A + ||b||_{1,1/2} + t ||V||_{1,1/2} ) \s^{-2\A}  w^2 G dx.\notag
\end{align}
Keeping in mind that $\theta(\lambda t) \le N$, we next use Lemma \ref{logi} to estimate the third integral in the right-hand side of \eqref{c1}, obtaining 
\begin{align}\label{c2}
& \left|\int  \left(\theta (\lambda t) + 1 \right) \s^{-2\A} \frac{|x|}{t}  w^2 G dX\right| \le N\int   \s^{-2\A} \frac{|x|}{t}  w^2 G dX
\\
&\le N e^{2\A N}\lambda^{2\A+N}\int w^2dX
+ N \delta \int \s^{-2\A}\frac{\theta(\lambda t)}{t}w^2 G dX.
\notag
\end{align}
Inserting \eqref{c2} into \eqref{c1}, we find
\begin{align}\label{c3}
& N^{3/2} \int \s^{1-2\A}\left[w_t + \Delta w+\langle b(x), \n w \rangle+V(x)w\right]^2 G dX 
\\
& \ge \int \left(\frac{\theta(\lambda t)}{Nt}  -N ||b||_{1,1/2}\right) t \s^{-2\A}  |\n w|^2 G  dX
\notag
\\
& + \int \left\{\left(\frac{\A}{N}- N \delta ||b||_{1,1/2} \right) \frac{\theta(\lambda t)}{t} -N\left(1+\A+ ||b||_{1,1/2} \right) ||b||_{1,1/2} \right\} \s^{-2\A}   w^2 G dX
\notag\\
& -N ||b||_{1,1/2} e^{2\A N}\lambda^{2\A+N}\int w^2dX - N ||V||_{1,1/2} \int \left(1 + t ||b||_{1,1/2} \right) \s^{-2\A}   w^2 G dX
\notag
\\
&+ \frac{1}{N} \int_{B_4\times\{a\}} t \s^{-2\A}   |\n w|^2 G dx - N \int_{B_4\times\{a\}}(\A + ||b||_{1,1/2} + t ||V||_{1,1/2} ) \s^{-2\A}  w^2 G dx.
\notag
\end{align}
Now for $\A \ge 2N^2 (1+||b||_{1,1/2}+||V||_{1,1/2}^{1/2})$ and $\delta \in (0,1)$ (to be chosen later in \eqref{delta}), we observe that for $a \le t \le \frac{1}{2 \lambda}= \frac{\delta^2}{2\A}$, the following bounds hold:
\begin{equation}\label{crcbd}
\begin{cases}
\left(\frac{\A}{N}- N \delta ||b||_{1,1/2} \right) \frac{\theta(\lambda t)}{t} -N\left(1+\A+ ||b||_{1,1/2} \right) ||b||_{1,1/2}  \ge \frac{\A}{2N} \frac{\theta(\lambda t)}{t} - \A^2,
\\
\left(1 +  t ||b||_{1,1/2} \right)||V||_{1,1/2} \le \left(1 +  \frac{\delta^2}{2\A} \ \frac{\A}{2N^2} \right)\frac{\A^2}{4N^4}  \ \le \frac{\A^2}{N},
\\
\A + ||b||_{1,1/2} + t ||V||_{1,1/2}  \le \A + \frac{\A}{2N^2} +\frac{\A^2}{4N^4} \  \frac{\delta^2}{2\A} \  \le 2 \A.
\end{cases}
\end{equation}
Using \eqref{crcbd}, along with $\frac{1}{N} t \leq\s(t) \leq t$ (see (i) after Lemma \ref{sig}), we find from \eqref{c3} 
\begin{align}\label{csf}
& N^{3/2} \int \s^{1-2\A}\left[w_t + \Delta w+\langle b(x), \n w \rangle+V(x)w\right]^2 G dX
\\
& \ge \int \left(\frac{\theta(\lambda t)}{Nt}  -\frac{\A}{2N}\right)  \s^{1-2\A}  |\n w|^2 G  dX
+ \int \left( \frac{\A}{2N} \frac{\theta(\lambda t)}{t} - 2 \A^2 \right) \s^{-2\A}   w^2 G dX
\notag
\\
& -N \A e^{2\A N}\lambda^{2\A+N}\int w^2dX 
+ \frac{1}{N} \int_{B_4\times\{a\}} t \s^{-2\A}   |\n w|^2 G dx - 2 N \A \int_{B_4\times\{a\}} \s^{-2\A}  w^2 G dx.
\notag
\end{align}
To control from below the first two integrals in the right-hand side of \eqref{csf}, we make critical use of the following consequence of \eqref{theta}: for $0<\lambda t \le 1/2$, we have
\begin{align}\label{theta1}
\frac{\theta(\lambda t)}{t} \ge \frac{\lambda^{1/2}}{t^{1/2}} (\log 2)^{3/2}  \geq \frac{\lambda}{N} = \frac{\alpha}{ N\delta^2}.
\end{align} 
We thus find that for $\A \ge 2N^2 (1+||b||_{1,1/2}+||V||_{1,1/2}^{1/2})$
 \begin{align*}%\label{c31}
 & N^{3/2} \int \s^{1-2\A}\left[w_t + \Delta w+\langle b(x), \n w \rangle+V(x)w\right]^2 G dX
 \notag
 \\
 & \ge \int \left( \frac{\alpha}{N\delta^2}  -\frac{\A}{2N}\right)  \s^{1-2\A}  |\n w|^2 G  dX
+ \int \left( \frac{\A^2 }{2N \delta^2} - 2 \A^2  \right) \s^{-2\A}   w^2 G dX
 \notag
 \\
 & -N \A e^{2\A N}\lambda^{2\A+N}\int w^2dX 
 + \frac{1}{N} \int_{B_4\times\{a\}} t \s^{-2\A}   |\n w|^2 G dx - 2 N \A \int_{B_4\times\{a\}} \s^{-2\A}  w^2 G dx.
 \end{align*}
At this point we choose $\delta \in (0,1)$ such that
\begin{align}\label{delta}
& \frac{1}{2N \delta^2} - 2 \ge 1.
\end{align}
For  
\[
\A \ge 2N^2 (1+||b||_{1,1/2}+||V||_{1,1/2}^{1/2}),
\]
and $\delta \in (0,1)$ as in \eqref{delta}, we finally obtain
\begin{align*}
& N^{3/2} \int \s^{1-2\A}\left[w_t + \Delta w+\langle b(x), \n w \rangle+V(x)w\right]^2 G dX
\notag
\\
& \ge \frac{\A}{2N} \int  \s^{1-2\A}  |\n w|^2 G  dX
+ \A^2  \int \s^{-2\A}   w^2 G dX
\notag
\\
& -N \A e^{2\A N}\lambda^{2\A+N}\int w^2dX 
+ \frac{1}{N} \int_{B_4\times\{a\}} t \s^{-2\A}   |\n w|^2 G dx - 2 N \A \int_{B_4\times\{a\}} \s^{-2\A}  w^2 G dx.
\end{align*}
\\
This completes the proof of Theorem \ref{carleman}, when $A\equiv \mathbb I_n$,  $b(x,t) =  b(x)$, $V(x,t) = V(x).$
We now proceed with the proof of the general case, which we subdivide into three steps.

\vskip 0.2in
	
\noindent \textbf{Step 2: $\boxed{A(x,t)=A(x)$,  $b(x,t) =  b(x)$, $V(x,t) = V(x)}.$}

We follow the strategy of the case $A \equiv \mathbb I_n$ in \textbf{Step 1}, and with $\s$ as in Lemma \ref{sig}, consider 
\begin{align}\label{vtw}
\w(x,t) =t^{ \frac n4} \s^{- \A} w(x,t) G(x,t)^{1/2},
\end{align} 
see \eqref{tw} and \eqref{beta}. Then, $w(x,t)= t^{-\frac n4 } \s^{\A}\w(x,t) G(x,t)^{-1/2}$ and we find (to simplify the notation, we henceforth write $A = A(x), b = b(x)$ and $V = V(x)$) 
\begin{align*}
& w_t + \D (A  \n w) +\langle b, \n w \rangle+V w
\\
& = \left(\tilde w_t + \D ( A \n \tilde w)\right)t^{-\frac n4} \s^\alpha G^{-1/2} + \tilde w\left[(t^{-\frac n4} \s^\alpha G^{-1/2})_t + t^{-\frac n4} \s^\alpha \D(A \n G^{-1/2})\right]
\\
& + 2 t^{-\frac n4} \s^\alpha \sa A \n \tilde w,\n(G^{-1/2})\da+ t^{-\frac n4} \s^\alpha \langle b, \n \tilde w \rangle G^{-1/2} + t^{-\frac n4} \s^\alpha \tilde w \langle b, \n(G^{-1/2})\rangle  + t^{-\frac n4} \s^\alpha G^{-1/2} V \tilde w
\\
& = \bigg\{\tilde w_t + \D (A \n \tilde w ) 
+ \left(\alpha \frac{\s'}{\s} -\frac n{4t}- \frac 12 \frac{G_t}{G} +\frac{\D ( A \n G^{-1/2})}{G^{-1/2}}\right)\tilde w - \sa A \n \tilde w,\frac{\n G}{G}\da  - \frac 12 \tilde w \sa b,\frac{\n G}{G}\da \\
& + \langle b, \n \tilde w \rangle + V \tilde w\bigg\} t^{-\frac n4} \s^\alpha G^{-1/2}.
\end{align*}
By direct computations (see \eqref{nG} and \eqref{Gt}), we have 
\begin{align}\label{a} 
& \frac{\D( A \n G^{-1/2})}{G^{-1/2}} - \frac 12 \frac{G_t}{G} = \frac{\D ( A x )}{4t} + \frac{\sa Ax, x \da}{16t^2} - \frac{|x|^2}{8t^2} + \frac{n}{4t}
\\
&=  \frac{n}{2t} - \frac{|x|^2}{8t^2} + \frac{\sa Ax, x \da}{16t^2} +\frac{1}{4t}(\D(Ax)-n).
\notag
\end{align}
We now introduce the vector field 
\begin{align}\label{defzv}
\F f \overset{def}=  \langle Ax, \n f\rangle + 2t \p_t f,
\end{align}
which is the variable-coefficient version of $Z$ in \eqref{defz}.
Using Lemma \ref{sig} and \eqref{ic}, we  find 
\begin{align*}
& \s^{1-2\alpha}\left(w_t + \D( A \n w) +\langle b, \n w \rangle+V w\right)^2 G 
\\
&= t^{-\frac n2} \s\ \bigg\{\frac{1}{2t} \F \tilde w  + \langle b, \n \tilde w \rangle  + \D( A \n \tilde w)
\\
& + \left(\alpha \frac{\s'}{\s} + \frac{n}{4t} + \frac{\sa Ax, x \da}{16t^2} - \frac{|x|^2}{8t^2} +\frac{1}{4t}(\D(Ax)-n) + \frac{1}{4t} \sa b,x\da + V\right) \tilde w\bigg\}^2
\\
& \ge N^{-3/2} t^{-\frac n2 +1} \left(\frac{t \s'}{\s}\right)^{-1/2}\bigg\{\frac{1}{2t} \F \tilde w  + \langle b, \n \tilde w \rangle + \D( A \n \tilde w) 
\\
&   + \left(\alpha \frac{\s'}{\s} + \frac{n}{4t}  + \frac{\sa Ax, x \da}{16t^2} - \frac{|x|^2}{8t^2} +\frac{1}{4t}(\D(Ax)-n)+ \frac{1}{4t} \sa b,x\da + V\right) \tilde w\bigg\}^2.
\end{align*}
Similarly to the case $A\equiv I_n$, we now use the inequality $(\mathcal A+ \mathcal B)^2 \geq \mathcal A^2 + 2 \mathcal A \mathcal B$, with
$$\mathcal A= \frac{\F\w}{2t}+\langle b, \n \w\rangle, 
$$
and  
$$ \mathcal B =\D( A \n \tilde w) + \left(\alpha \frac{\s'}{\s} + \frac{n}{4t}  + \frac{\sa Ax, x \da}{16t^2} - \frac{|x|^2}{8t^2} +\frac{1}{4t}(\D(Ax)-n)+ \frac{1}{4t} \sa b,x\da + V\right)\w.$$
We find
\begin{align}\label{vcs}
&N^{3/2}\int \s^{1-2\A}\left( w_t + \D(A \n w) +\langle b, \n w\rangle+ V w\right)^2 G dX
\\
&\ge \int t^{-\frac{n}2 + 1} \left(\frac{t \s'}{\s}\right)^{-1/2} \left(\frac{\F \w}{2t}+\langle b, \n \w\rangle\right)^2 dX + \int t^{-\frac{n}2} \left(\frac{t \s'}{\s}\right)^{-1/2} \F \w  \D( A \n \tilde w) dX 
\notag
\\
& + 2 \int t^{-\frac{n}2 + 1} \left(\frac{t \s'}{\s}\right)^{-1/2} \langle b, \n \w\rangle\D(A \n \w) dX + \frac12 \int t^{-\frac{n}2} \left(\frac{t \s'}{\s}\right)^{-1/2} [\D(A x)-n]\left( \frac{ \F \w}{2t} + \langle b, \n \w\rangle\right)\w  dX 
\notag
\\
& + \int t^{-\frac{n}2} \left(\frac{t \s'}{\s}\right)^{-1/2} \left( \frac{\A\s'}{\s} +\frac{n}{4t} - \frac{|x|^2}{8t^2}  + \frac{\sa Ax, x \da}{16t^2}+\frac{1}{4t}\left\langle  b, x\right\rangle+ V \right) \w \F\w dX. 
\notag
\\
&+2 \int t^{-\frac{n}2 +1} \left(\frac{t \s'}{\s}\right)^{-1/2}\left( \frac{\A\s'}{\s} + \frac{n}{4t}-\frac{|x|^2}{8t^2}  +\frac{\sa Ax, x \da}{16t^2}+ \frac{1}{4t}\left\langle  b, x\right\rangle + V  \right)\w \langle b, \n \w\rangle dX  
\notag 
\\
&=\mi_1+\mi_2+\mi_3+\mi_4+\mi_5+\mi_6.
\notag
\end{align}

We first analyse $\mi_5,$ the integral containing $\w \F\w$. Following the reasoning in \eqref{divzero2}, \eqref{beta}, we introduce the vector field
\begin{equation}\label{newzeta}
\mathscr Z = t^{-\frac{n}2} \left[ \frac{n}{4t} - \frac{|x|^2}{8t^2}  + \frac{1}{16t^2}\langle Ax,x\rangle \right] \F, 
\end{equation}
obtaining 
\begin{align*}
\D \mathscr Z &=  t^{-\frac{n}2} \left\{\frac{n}{4t} \D(Ax) - \frac{1}{8t^2}\D(|x|^2 Ax)  + \frac{1}{16t^2}\D(\left\langle Ax,x\right\rangle Ax)\right\}\\
 & \ + \frac{d}{dt}\left(t^{-\frac{n}2} \left\{ \frac{n}{2} - \frac{|x|^2}{4t}  + \frac{1}{8t}\sa Ax, x\da \right\}\right).
\end{align*}    
Keeping in mind that $A = A(x)$, the assumptions \eqref{ass} and $A(0,0) =\mathbb I_n$ give  
\begin{equation}\label{divaa}
\begin{cases}
\D (A(x)x)=n+O(|x|),
\\
\D(|x|^2 Ax)=(n+2)|x|^2 +O(|x|^3),
\\ 
\D(\left\langle Ax,x\right\rangle Ax)=(n+2)|x|^2 +O(|x|^3).
\end{cases}
\end{equation}
Using \eqref{divaa}, and writing $A=A -\mathbb I_n + \mathbb I_n$, we find
\begin{align*}
\D \mathscr Z &= t^{-\frac{n}2} \left\{\frac{n^2}{4t}   -\frac{n+2}{16t^2}|x|^2 + \frac{O(|x|)}{t} + \frac{O(|x|^3)}{t^2}\right\}\\
 & + t^{-\frac{n}2} \left\{ -\frac{n^2}{4t} + \frac{n+2}{8t^2}|x|^2 -\frac{n+2}{16t^2}|x|^2  - \frac{n+2}{16t^2} \sa (A-I)x, x\da \right\}.
\end{align*}
Finally, we use \eqref{ass} to obtain
  \begin{align*}
  \D \mathscr Z &=  t^{-\frac{n}2} \left\{\frac{O(|x|)}{t} + \frac{O(|x|^3)}{t^2}\right\}.
  \end{align*}
Therefore, with $\mathscr Z$ as in \eqref{newzeta}, and an application of \eqref{ibp} with $\Om = B_4 \times (a,\frac{1}{2 \lambda})$, 
 $g = \w^2/2$ and $f = \left(\frac{t \s'}{\s}\right)^{-1/2}$, we deduce the following 
\begin{align*}
&\int t^{-\frac{n}2} \left(\frac{t \s'}{\s}\right)^{-1/2} \left(  \frac{n}{4t} - \frac{|x|^2}{8t^2}  + \frac{\sa Ax,x \da}{16t^2}  \right) \w \F\w dX
\\
&=\int_{B_4 \times \{a\}} \w^2
t^{-\frac{n}2+1} \left(\frac{t \s'}{\s}\right)^{-1/2} \left(  \frac{|x|^2}{8t^2} -\frac{n}{4t} -   \frac{\sa Ax,x \da}{16t^2}  \right)dx
\\
&+\int \w^2
t^{-\frac{n}2+1}  \left(  \frac{|x|^2}{8t^2} -\frac{n}{4t} -   \frac{\sa Ax,x \da}{16t^2} \right) \frac{d}{dt}\left(\frac{t \s'}{\s}\right)^{-1/2}dX\\
&+ \int t^{-\frac{n}2} \left(\frac{t \s'}{\s}\right)^{-1/2}\left(\frac{O(|x|)}{t} + \frac{O(|x|^3)}{t^2}\right) \w^2  dX.
\end{align*}  
Next, with the different choice $\mathscr Z = t^{-\frac{n}2-1}\F$, using \eqref{ass} and $A(0,0) =\mathbb I_n$, which give $\D (A(x)x)=n+O(|x|)$, we have
\begin{align}\label{dvz}
\D \mathscr Z = t^{-\frac{n}2-1} \D(Ax)  -n t^{-\frac{n}2-1} = t^{-\frac{n}2-1}O(|x|). 
\end{align}
Applying again \eqref{ibp} with $\Om = B_4 \times (a,\frac{1}{2 \lambda})$, $g = \w^2/2$ and $f = \left(\frac{t \s'}{\s}\right)^{1/2}$, we thus find 
\begin{align*}
& \int t^{-\frac n2} \left(\frac{t \s'}{\s}\right)^{-1/2} \w \F\w\ \alpha \frac{\s'}{\s} dX = \A \int O(|x|)t^{-\frac{n}2-1} \left(\frac{t \s'}{\s}\right)^{1/2} \w^2 dX
\\
& - \alpha \int_{B_4\times\{a\}} \w^2 t^{-\frac n2} \left(\frac{t \s'}{\s}\right)^{1/2}  dx - \alpha \int \w^2 t^{-\frac n2} \frac{d}{dt} \left(\frac{t \s'}{\s}\right)^{1/2} dX.
\end{align*}
Combining terms, we conclude that
\begin{align}\label{vi5}
\mi_5 &= \int t^{-\frac{n}2} \left( \frac{1}{4t} \sa b(x),x \da + V(x)  \right) \w \F\w dX - \alpha \int \w^2 t^{-\frac n2} \frac{d}{dt} \left(\frac{t \s'}{\s}\right)^{1/2} dX
\\
& +\int \w^2
t^{-\frac{n}2} \left(  \frac{|x|^2}{8t} -\frac{n}{4} -   \frac{\sa Ax, x \da}{16t}  \right) \frac{d}{dt}\left(\frac{t \s'}{\s}\right)^{-1/2}dX
\notag
\\
& + \int_{B_4 \times \{a\}} \w^2
t^{-\frac{n}2} \left(\frac{t \s'}{\s}\right)^{-1/2} \left(  \frac{|x|^2}{8t} -\frac{n}{4} -   \frac{\sa Ax, x \da}{16t} \right)dx - \alpha \int_{B_4\times\{a\}} \w^2 t^{-\frac n2} \left(\frac{t \s'}{\s}\right)^{1/2}  dx
\notag
\\
& + \int t^{-\frac{n}2} \left(\frac{t \s'}{\s}\right)^{-1/2}\left(\frac{O(|x|)}{t} + \frac{O(|x|^3)}{t^2}\right) \w^2  dX + \A \int O(|x|)t^{-\frac{n}2-1} \left(\frac{t \s'}{\s}\right)^{1/2} \w^2 dX.
\notag
\end{align}
Using \eqref{vtw} and \eqref{vi5}, we find
\begin{align}\label{vi5c}
\mi_5 &= \int t^{-\frac{n}2} \left( \frac{1}{4t} \sa b(x),x \da + V(x)  \right) \w \F\w dX - \alpha \int \s^{-2\A} \frac{d}{dt} \left(\frac{t \s'}{\s}\right)^{1/2} w^2 G dX
\\
& +\int \s^{-2\A} \left(\frac{t \s'}{\s}\right)^{-1/2} \left(  \frac{|x|^2}{8t} -\frac{n}{4} -   \frac{\sa Ax, x \da}{16t}  \right) \frac{d}{dt}\left(\frac{t \s'}{\s}\right)^{-1/2} w^2 GdX
\notag
\\
& + \int \s^{-2\A} \left(\frac{t \s'}{\s}\right)^{-1/2}\left(\frac{O(|x|)}{t} + \frac{O(|x|^3)}{t^2}\right) w^2 G  dX + \A \int \frac{O(|x|)}{t} \left(\frac{t \s'}{\s}\right)^{1/2} w^2 G dX
 \notag\\
&+ \int_{B_4 \times \{a\}} \s^{-2\A}
 \left(\frac{t \s'}{\s}\right)^{-1/2} \left(  \frac{|x|^2}{8t} -\frac{n}{4} -   \frac{\sa Ax, x \da}{16t} \right)w^2 Gdx - \alpha \int_{B_4\times\{a\}} \s^{-2\A} \left(\frac{t \s'}{\s}\right)^{1/2} w^2 G  dx.
\notag
\end{align}

We now analyse $\mi_2$. Using \eqref{vrell} with $B=A$ and $f=\w$, and recalling that we are assuming that $A$ be time-independent, we obtain
\begin{align*}
\mi_2 & = \int t^{-\frac n2} \left(\frac{t \s'}{\s}\right)^{-1/2} \sa Ax,\n \tilde w\da \D(A \n \w) dX + 2 \int t^{-\frac n2 +1} \left(\frac{t \s'}{\s}\right)^{-1/2}  \tilde w_t \D(A \n \w) dX
\\
& = \left(\frac n2 - 1\right) \int t^{-\frac n2} \left(\frac{t \s'}{\s}\right)^{-1/2} \sa A\n \w, \n \w\da  dX -  \int t^{-\frac n2 +1} \left(\frac{t \s'}{\s}\right)^{-1/2} \frac{d}{dt}(\sa A\n \w, \n \w\da) dX
\\
&\ + \int O(|x|)t^{-\frac n2} \left(\frac{t \s'}{\s}\right)^{-1/2} |\n \w|^2  dX
\\
& = \left(\frac n2 - 1\right) \int t^{-\frac n2} \left(\frac{t \s'}{\s}\right)^{-1/2} \sa A\n \w, \n \w\da dX +  \int \frac{d}{dt}(t^{-\frac n2 +1}) \left(\frac{t \s'}{\s}\right)^{-1/2}\sa A\n \w, \n \w\da dX 
\\
& + \int t^{-\frac n2 +1} \frac{d}{dt}\left(\frac{t \s'}{\s}\right)^{-1/2} \sa A\n \w, \n \w\da dX + \int_{B_4\times\{a\}} t^{-\frac n2 +1} \left(\frac{t \s'}{\s}\right)^{-1/2} \sa A\n \w, \n \w\da dx
\\
&\ + \int O(|x|)t^{-\frac n2} \left(\frac{t \s'}{\s}\right)^{-1/2} |\n \w|^2  dX
\\
& = \int t^{-\frac n2 +1} \frac{d}{dt}\left(\frac{t \s'}{\s}\right)^{-1/2} \sa A\n \w, \n \w\da dX + \int_{B_4\times\{a\}} t^{-\frac n2 +1} \left(\frac{t \s'}{\s}\right)^{-1/2} \sa A\n \w, \n \w\da dx
\\
&\ + \int O(|x|)t^{-\frac n2} \left(\frac{t \s'}{\s}\right)^{-1/2} |\n \w|^2  dX.
\end{align*}
Observe now that, similarly to \eqref{nw}, we presently have
\[
\sa A\n \w, \n \w \da = t^{\frac n2} \s^{-2\A} G \left\{\sa A\n w, \n w \da  + w^2 \frac{\sa Ax, x\da}{16t^2} - \frac{1}{4t} \sa Ax,\n w^2\da\right\}.
\] 
Proceeding as for \eqref{tricky}, and also keeping in mind that $A(0,0)=\mathbb I_n$, as well as the assumption \eqref{ass}, we obtain
\begin{align*}
-\int_{B_4} \sa Ax,\n w^2\da G dx = n\int_{B_4} w^2 G dx -\int_{B_4} \frac{|x|^2}{2t}w^2Gdx + \int_{B_4} O(|x|)w^2 G dx +\int_{B_4} \frac{O(|x|^3)}{t}w^2 G dx.
\end{align*}
We use this identity in the above expression of $\mi_2$ to express in terms of $\sa A\n w, \n w\da $ the two integrals in which $\sa A\n \w, \n \w \da$ appears. Also, an application of Cauchy-Schwarz inequality to $|\n \w|^2 = t^{\frac n2} \s^{-2\A} G \left|\n w  - w \frac{x}{4t}\right|^2$, gives
\begin{align*}
 t^{-\frac n2} |\n \w|^2 \le 2 \s^{-2\A} |\n w|^2 G + 2 \s^{-2\A}\frac{|x|^2}{16t^2}w^2G.
\end{align*}
We conclude that		
\begin{align}\label{vi2c}
\mi_2 & \ge \int t\s^{-2\A} \frac{d}{dt}\left(\frac{t \s'}{\s}\right)^{-1/2} \sa A\n w, \n w\da G dX
\\
& - \int \s^{-2\A} \frac{d}{dt}\left(\frac{t \s'}{\s}\right)^{-1/2} w^2 \left(\frac{|x|^2}{8t}-\frac{\sa Ax, x\da}{16t}-\frac n4 \right)G dX -N\int \left(|x|+\frac{|x|^3}{t}\right)\s^{-2\A} \left(\frac{t \s'}{\s}\right)^{-1/2} w^2 G dX
\notag
\\
&  -N \int  |x|\s^{-2\A}\left(\frac{t \s'}{\s}\right)^{-1/2} |\n w|^2  dX -N \int  \frac{|x|^3}{t^2}\s^{-2\A}\left(\frac{t \s'}{\s}\right)^{-1/2} w^2 G dX
\notag\\
& + \int_{B_4\times\{a\}} t\s^{-2\A} \left(\frac{t \s'}{\s}\right)^{-1/2} \sa A\n w, \n w\da G dx - \int_{B_4\times\{a\}} \s^{-2\A} \left(\frac{t \s'}{\s}\right)^{-1/2} w^2 \left(\frac{|x|^2}{8t}-\frac{\sa Ax, x\da}{16t}-\frac n4 \right)G dx
\notag
\\
& -N\int_{B_4\times\{a\}} \left(|x|+\frac{|x|^3}{t}\right)\s^{-2\A} \left(\frac{t \s'}{\s}\right)^{-1/2} w^2 G dx. 
\notag
\end{align}
We now combine \eqref{vi2c} with \eqref{vi5c} and, as in the case when $A \equiv \mathbb{I}_n$, we exploit some crucial cancellations, obtaining  
\begin{align}\label{vi25c}
\mi_2 + \mi_5 & \ge \int t\s^{-2\A} \frac{d}{dt}\left(\frac{t \s'}{\s}\right)^{-1/2} \sa A\n w, \n w\da G dX  - \alpha \int \s^{-2\A} \frac{d}{dt} \left(\frac{t \s'}{\s}\right)^{1/2} w^2 G dX
\\
& + \int_{B_4\times\{a\}} t\s^{-2\A} \left(\frac{t \s'}{\s}\right)^{-1/2} \sa A\n w, \n w\da G dx - \alpha \int_{B_4\times\{a\}} \s^{-2\A} \left(\frac{t \s'}{\s}\right)^{1/2} w^2 G  dx
\notag
\\
&\ -N \int |x| \s^{-2\A}\left(\frac{t \s'}{\s}\right)^{-1/2} |\n w|^2  dX -N \int  \frac{|x|^3}{t^2}\s^{-2\A}\left(\frac{t \s'}{\s}\right)^{-1/2} w^2 G dX 
\notag
\\
& -N \A \int \frac{|x|}{t} \s^{-2\A} \left(\frac{t \s'}{\s}\right)^{1/2} w^2 G dX -N\int_{B_4\times\{a\}} \left(|x|+\frac{|x|^3}{t}\right)\s^{-2\A} \left(\frac{t \s'}{\s}\right)^{-1/2} w^2 G dx
\notag
\\
&+ \int t^{-\frac{n}2} \left( \frac{1}{4t} \sa b,x \da + V\right) \w \F\w dX.
\notag
\end{align}	
As a help to the reader, we mention that in \eqref{vi25c} we have  used the fact  that $\A>1,$ $t<1$, and that $|x|<4.$	
To estimate from below the last integral in the right-hand side of \eqref{vi25c}, we apply \eqref{ibp} twice. First, we take $g = \w^2$, $f = \left(\frac{t \s'}{\s}\right)^{-1/2} \sa b,x\da$ and $\mathscr Z = t^{-\frac n2-1} \F$. Noting that, in view of  \eqref{dvz}, this choice gives $\operatorname{div}\mathscr Z = t^{-\frac{n}2-1}O(|x|)$, and then using \eqref{vtw}, that $|x| \le 4$, and also that $A$ is Lipschitz in $x-$variable (see \eqref{ass}), we find for some universal large $N$
\begin{align*}
& \frac 12 \int t^{- \frac n2} \left(\frac{t \s'}{\s}\right)^{-1/2} \F(\w^2) \frac{1}{4t} \sa b,x\da  dX = \frac 18 \int \left(\frac{t \s'}{\s}\right)^{-1/2} \sa b,x\da \mathscr Z(\w^2) dX
\\
& = - \frac 14 \int_{B_4\times\{a\}}  \s^{-2\A} \left(\frac{t \s'}{\s}\right)^{-1/2} \sa b,x\da w^2 G dx - \frac 14 \int  \s^{-2\A} \frac{d}{dt}\left(\frac{t \s'}{\s}\right)^{-1/2} \sa b,x\da w^2 G dX
\\
& - \frac 18 \int  \s^{-2\A} \left(\frac{t \s'}{\s}\right)^{-1/2} \frac{D_i b_j x_j (Ax)_i + \sa b,Ax \da}{t} w^2 G dX + \int \frac{O(|x|)}{t} \s^{-2\A} \left(\frac{t \s'}{\s}\right)^{-1/2} \sa b, x \da w^2 G dX
\\
& \ge  - N ||b||_{1,1/2} \int_{B_4\times\{a\}}  \s^{-2\A} \left(\frac{t \s'}{\s}\right)^{-1/2} w^2 G dx - \frac 14 \int  \s^{-2\A} \frac{d}{dt}\left(\frac{t \s'}{\s}\right)^{-1/2} \sa b,x\da w^2 G dX
\\
& - N ||b||_{1,1/2} \int \s^{-2\A} \frac{|x|}t  \left(\frac{t \s'}{\s}\right)^{-1/2} w^2 G dX.
\end{align*}
In a similar fashion, the part of the integral involving $V$ gives 
\begin{align*}
& \frac 12 \int t^{- \frac n2} \left(\frac{t \s'}{\s}\right)^{-1/2} \F(\w^2) V dX \ge - N ||V||_{1,1/2} \int_{B_4\times\{a\}} t \s^{-2\A} \left(\frac{t \s'}{\s}\right)^{-1/2} w^2 G dx
\\
& - \int t \s^{-2\A} \frac{d}{dt}\left(\frac{t \s'}{\s}\right)^{-1/2} V w^2 G dX -N ||V||_{1,1/2} \int  \s^{-2\A} \left(\frac{t \s'}{\s}\right)^{-1/2} w^2 G dX.
\end{align*}
Using the latter two estimates in \eqref{vi25c}, we obtain 
\begin{align}\label{vi25b}
\mi_2 + \mi_5 & \ge \int t\s^{-2\A} \frac{d}{dt}\left(\frac{t \s'}{\s}\right)^{-1/2} \sa A\n w, \n w\da G dX  - \alpha \int \s^{-2\A} \frac{d}{dt} \left(\frac{t \s'}{\s}\right)^{1/2} w^2 G dX
\\
& - \frac 14 \int  \s^{-2\A} \frac{d}{dt}\left(\frac{t \s'}{\s}\right)^{-1/2} \sa b,x\da w^2 G dX - \int t \s^{-2\A} \frac{d}{dt}\left(\frac{t \s'}{\s}\right)^{-1/2} V w^2 G dX
\notag
\\
&- N ||b||_{1,1/2} \int \s^{-2\A} \frac{|x|}t  \left(\frac{t \s'}{\s}\right)^{-1/2} w^2 G dX -N ||V||_{1,1/2} \int  \s^{-2\A} \left(\frac{t \s'}{\s}\right)^{-1/2} w^2 G
\notag
\\
& + \int_{B_4\times\{a\}} t\s^{-2\A} \left(\frac{t \s'}{\s}\right)^{-1/2} \sa A\n w, \n w\da G dx - \alpha \int_{B_4\times\{a\}} \s^{-2\A} \left(\frac{t \s'}{\s}\right)^{1/2} w^2 G  dx
\notag
\\
& - N ||b||_{1,1/2} \int_{B_4\times\{a\}}  \s^{-2\A} \left(\frac{t \s'}{\s}\right)^{-1/2} w^2 G dx - N ||V||_{1,1/2} \int_{B_4\times\{a\}} t \s^{-2\A} \left(\frac{t \s'}{\s}\right)^{-1/2} w^2 G dx
\notag
\\
& - N \int |x| \s^{-2\A}\left(\frac{t \s'}{\s}\right)^{-1/2} |\n w|^2  dX -N \int  \frac{|x|^3}{t^2}\s^{-2\A}\left(\frac{t \s'}{\s}\right)^{-1/2} w^2 G dX 
\notag
\\
& - N \A \int \s^{-2\A}\frac{|x|}{t} \left(\frac{t \s'}{\s}\right)^{1/2} w^2 G dX -N\int_{B_4\times\{a\}} \left(|x|+\frac{|x|^3}{t}\right)\s^{-2\A} \left(\frac{t \s'}{\s}\right)^{-1/2} w^2 G dx.
\notag
\end{align}	
It might be helpful for the reader to note that, except for the last four terms in the right-hand side of \eqref{vi25b}, all other terms are similar to those in \eqref{I23bis}.

Next, we analyse $\mi_3$. We apply Proposition \ref{P:vrellich} with the choice $\mathscr Z= \sum_{k=1}^n b_k(x)D_k,$ $B=A$ and $f=\w.$ For this choice of $\mathscr Z,$ it is easy to see that $[D_j, \mathscr Z]f= \sum_{k=1}^nD_jb_k \p_k f.$ Thus, we  have
  \begin{align*}
  |\mi_3|&= \left|2 \int t^{-\frac{n}{2}+1} \left(\frac{t \s'}{\s}\right)^{-1/2}\langle b(x), \n \w\rangle\D(A \n \w) dX \right| = \left|\int t^{-\frac{n}{2}+1} \left(\frac{t \s'}{\s}\right)^{-1/2} \D b \langle A \n \w , \n \w\rangle dX \right|
  \\
  &+2 \int t^{-\frac{n}{2}+1} \left(\frac{t \s'}{\s}\right)^{-1/2} |a_{j\ell} \w_\ell D_jb_k  \w_k| dX + \int t^{-\frac{n}{2}+1} \left(\frac{t \s'}{\s}\right)^{-1/2} |b_k D_k a_{ij}\w_i \w_j| dX
  \\
  & \le N||b||_{1,1/2} \int t^{-\frac{n}{2}+1} \left(\frac{t \s'}{\s}\right)^{-1/2}|\n \w|^2 dX. 
  \end{align*}
We  thus infer for some universal $N$
\begin{align*}
\mi_3 \ge -N ||b||_{1,1/2} \int t^{-\frac{n}{2}+1} \left(\frac{t \s'}{\s}\right)^{-1/2}|\n \w|^2 dX.
\end{align*}
Keeping in mind that \eqref{vtw} gives \eqref{nw}, using \eqref{tricky} we find, similarly to \eqref{I4}, 
\begin{align}\label{vi3c}
  \mi_3 \ge -N ||b||_{1,1/2} \int t \s^{-2\A} \left(\frac{t \s'}{\s}\right)^{-1/2}|\n w|^2 G dX -N ||b||_{1,1/2} \int  \s^{-2\A} \left(\frac{t \s'}{\s}\right)^{-1/2}w^2 G dX.
\end{align}

We now estimate $\mi_4$. It is worth noting that, when $A \equiv \mathbb I_n$, this term vanishes and therefore is not present. Also, notice that since $A$ is only Lipschitz continuous in the $x$-variable, the term $\D(A(x)x)$ cannot be further differentiated, which presently prevents the application of integration by parts. To treat this term, we make use of $\mi_1$, which provides a positive contribution. We start with $\D (A(x)x)=n+O(|x|)$, to find 
\begin{align}\label{vi4}
&|\mi_4|= \left|\frac12 \int t^{-\frac n2} \left(\frac{t \s'}{\s}\right)^{-1/2} [\D(A(x) x)-n]\left( \frac{ \F \w}{2t} + \langle b,\n \w\rangle\right)\w  dX\right|
\\
&\le N \int |x| t^{-\frac{n}2} \left(\frac{t \s'}{\s}\right)^{-1/2} \left| \frac{ \F \w}{2t} + \langle b, \n \w\rangle\right||\w|  dX
\notag
\\
&\le N^2 \int \frac{|x|^2}{t} t^{-\frac{n}2} \left(\frac{t \s'}{\s}\right)^{-1/2} \w^2  dX +\frac 14 \int  t^{-\frac{n}2+1} \left(\frac{t \s'}{\s}\right)^{-1/2}    \left( \frac{ \F \w}{2t} + \langle b, \n \w\rangle\right)^2 dX,
\notag
\end{align}
where we have used the Cauchy-Schwarz inequality. Observe that the last term in the right-hand side of \eqref{vi4} is $\frac 14 \mi_1.$
Using this fact, we deduce that 
\begin{align*}
\mi_1+\mi_4 \ge -N^2 \int \frac{|x|^2}{t} t^{-\frac{n}2} \left(\frac{t \s'}{\s}\right)^{-1/2} \w^2  dX.
\end{align*}
Keeping \eqref{vtw} in mind, along with the fact that $|x|\le 4$ in the domain of the integral, the latter inequality gives
\begin{align}\label{vi14c}
\mi_1+\mi_4 \ge -N^2 \int \frac{|x|}{t} \s^{-2\A} \left(\frac{t \s'}{\s}\right)^{-\frac12} w^2 G dX.
\end{align}

Finally, we analyse $\mi_6.$ Keeping \eqref{beta} in mind and recalling $\I_5$, we notice that 
  \begin{align*}
  \mi_6 &= \I_5 + 2\int t^{-\frac{n}2 +1}\left(\frac{t \s'}{\s}\right)^{-\frac12}\frac{\sa (A-I)x, x \da}{16t^2}\w \langle b, \n \w\rangle dX\\
  & = \I_5 + \frac 18\int t^{-\frac{n}2 -1}\left(\frac{t \s'}{\s}\right)^{-\frac12} \D(\sa (A-I)x, x \da b)\w^2 dX.
  \end{align*}
  
Using \eqref{ass}, $A(0,0) =\mathbb I_n$ and  $x \in B_4,$ one can easily verify that 
\begin{align*}
|\D(\sa (A-I)x, x \da b)| \le N||b||_{1,1/2}|x|.
\end{align*} 
Utilizing this bound, along with \eqref{I5}, yields
\begin{align}\label{vi6}
\mi_6 & \ge - \alpha  ||b||_{1,1/2} \int \s^{-2\A} \left(\frac{t \s'}{\s}\right)^{1/2} w^2 G dX - \frac n4 ||b||_{1,1/2} \int \s^{-2\A} \left(\frac{t \s'}{\s}\right)^{-1/2} w^2 G dX
\\
& - N ||b||_{1,1/2} \int \s^{-2\A} \frac{|x|}t  \left(\frac{t \s'}{\s}\right)^{-1/2} w^2 G dX - N ||b||_{1,1/2}^2 \int \s^{-2\A} \left(\frac{t \s'}{\s}\right)^{-1/2} w^2 G  dX
\notag
\\
& - N ||V||_{1,1/2} ||b||_{1,1/2} \int \s^{-2\A}  t \left(\frac{t \s'}{\s}\right)^{-1/2} w^2 G  dX.
\notag
\end{align}
We can now argue as in the case $A \equiv \mathbb I_n$ (compare with the work leading from \eqref{ode} to \eqref{csf}), to find that for $\A \ge N(1+||b||_{1,1/2}+||V||_{1,1/2}^{1/2}),$ we have
\begin{align}\label{vsf}
&N^{3/2}\int \s^{1-2\A}\left( w_t + \D(A \n w) +\langle b(x), \n w\rangle+ V(x) w\right)^2 G dX
\\
& \ge \int \left(\frac{\theta(\lambda t)}{Nt}  -\frac{\A}{2N}\right)  \s^{1-2\A}  |\n w|^2 G  dX
+ \int \left( \frac{\A}{2N} \frac{\theta(\lambda t)}{t} - 2 \A^2 \right) \s^{-2\A}   w^2 G dX
 \notag
 \\
 & -N \A e^{2\A N}\lambda^{2\A+N}\int w^2dX 
 + \frac{1}{N} \int_{B_4\times\{a\}} t \s^{-2\A}   |\n w|^2 G dx - 2 N \A \int_{B_4\times\{a\}} \s^{-2\A}  w^2 G dx + \mathscr E, \notag
 \end{align}
where $\mathscr E$ is a quantity which includes the last four terms in the right-hand side of \eqref{vi25b}, plus the term in the right-hand side of \eqref{vi14c}. Specifically, 
\begin{align}\label{err}
\mathscr E = &  \int |x| \s^{-2\A}\left(\frac{t \s'}{\s}\right)^{-1/2} |\n w|^2  dX -N \int  \frac{|x|^3}{t^2}\s^{-2\A}\left(\frac{t \s'}{\s}\right)^{-1/2} w^2 G dX 
\\
& -N \A \int \frac{|x|}{t} \left(\frac{t \s'}{\s}\right)^{1/2} w^2 G dX
 -N\int_{B_4\times\{a\}} \left(|x|+\frac{|x|^3}{t}\right)\s^{-2\A} \left(\frac{t \s'}{\s}\right)^{-1/2} w^2 G dx
  \notag
  \\
 &-N^2 \int \frac{|x|}{t} \s^{-2\A} \left(\frac{t \s'}{\s}\right)^{-\frac12} w^2 G dX.
\notag
\end{align}
To estimate $\mathscr E$, we use the bounds $\frac{1}{N} t \leq \s \leq t$ in Lemma \ref{sig} in the first term of \eqref{err}, as well as \eqref{ic}, to eliminate the weights $\left(\frac{t \s'}{\s}\right)^{\pm 1/2}$. We thus find
 \begin{align*}
|\mathscr E|  &\le N  \int \frac{|x|}{t} \s^{1-2\A} |\n w|^2  dX + N\A \int \s^{-2\A} \left(\frac{|x|}{t} +\frac{|x|^3}{\A t^2} \right) w^2 G dX 
\\
& + N \int_{B_4\times\{a\}} \s^{-2\A}  \left(1+\frac{|x|^3}{t}\right)w^2 G dx.
\end{align*}
In the first and second term in the right-hand side of the latter inequality, we use Lemma \ref{logi}, obtaining 
\begin{align}\label{err2}
|\mathscr E| &\le  N e^{2\A N}\lambda^{2\A+N}\int t|\n w|^2dX
+ N \delta \int \s^{1-2\A}\frac{\theta(\lambda t)}{t}|\n w|^2 G dX 
\\
&  + N \A e^{2\A N}\lambda^{2\A+N}\int w^2dX  + N \A \delta \int \s^{-2\A}\frac{\theta(\lambda t)}{t}w^2 G dX
\notag
\\
& + N \int_{B_4\times\{a\}} \s^{-2\A}  w^2 G dx  + N \int_{B_4\times\{a\}} \s^{-2\A}  \frac{|x|^3}{t} w^2 G dx.
\notag
\end{align}
To ensure a  certain smallness of the last term in the right-hand side of \eqref{err2},  we split the integral as follows 
\begin{align*}
\int_{B_4\times\{a\}} \s^{-2\A}  \frac{|x|^3}{t} w^2 G dx =\int_{B_{\delta}\times\{a\}} \s^{-2\A}  \frac{|x|^3}{t} w^2 G dx+\int_{(B_4\setminus B_{\delta})\times\{a\}} \s^{-2\A}  \frac{|x|^3}{t} w^2 G dx.
\end{align*}
Next, we observe that for $|x| > \delta,$ one has $\s^{-2\A}(a) \frac{|x|^3}{a}G(x,a) \le N^{2\A}\lambda^{2\A+N}.$ Therefore, we obtain
\begin{align}\label{bder}
\int_{B_4\times\{a\}} \s^{-2\A}  \frac{|x|^3}{t} w^2 G dx 
&\le \delta \int_{\R^n \times\{a\}} \s^{-2\A}  \frac{|x|^2}{t} w^2 G dx +  N^{2\A}\lambda^{2\A+N} 	\int_{B_4\times\{a\}} w^2 dx 
\\
& \le N \delta 	\int_{B_4\times\{a\}} t \s^{-2\A}  |\n w|^2 G dx + 	N\delta \int_{B_4\times\{a\}} \s^{-2\A}   w^2 G dx
\notag 
\\
& +N^{2\A}\lambda^{2\A+N} 	\int_{B_4\times\{a\}} w^2 dx,
\notag
\end{align}
where in the last inequality we have applied Lemma \ref{logb}.
Combining \eqref{err2} with \eqref{bder}, we thus find
\begin{align}\label{errf}
\mathscr E &\ge  -N \A e^{2\A N}\lambda^{2\A+N} \sup_{t \ge a}\int_{\R^n} [w^2+t|\n w|^2]dx
- N \delta \int \s^{1-2\A}\frac{\theta(\lambda t)}{t}|\n w|^2 G dX 
\\
&  - N \A \delta \int \s^{-2\A}\frac{\theta(\lambda t)}{t}w^2 G dX
- N \int_{B_4\times\{a\}} \s^{-2\A}  w^2 G dx - N \delta 	\int_{B_4\times\{a\}} t \s^{-2\A}  |\n w|^2 G dx.
\notag
\end{align}
Using \eqref{errf} in \eqref{vsf}, we obtain
\begin{align}\label{vsf1}
&N^{3/2}\int \s^{1-2\A}\left( w_t + \D(A \n w) +\langle b, \n w\rangle+ V w\right)^2 G dX
\\
& \ge \int \left[\left(\frac{1}{N}-N\delta\right)\frac{\theta(\lambda t)}{t}  -\frac{\A}{2N}\right]  \s^{1-2\A}  |\n w|^2 G  dX
\notag
\\
&  + \int \left[ \left(\frac{\A}{2N}-N\A \delta\right) \frac{\theta(\lambda t)}{t} - 2 \A^2 \right]\s^{-2\A}   w^2 G dX
\notag
\\
&  - N \A e^{2\A N}\lambda^{2\A+N} \sup_{t \ge a}\int_{\R^n} [w^2+t|\n w|^2]dx
\notag
\\
& \ + \left(\frac{1}{N}-N\delta\right) \int_{B_4\times\{a\}} t \s^{-2\A}   |\n w|^2 G dx - 2 N \A \int_{B_4\times\{a\}} \s^{-2\A}  w^2 G dx. \notag
\end{align}
At this point we choose $\delta >0$ such that 
\begin{align}\label{delv}
\frac 1{2N}-N\delta \ge \frac{1}{4N}. 
\end{align}
Therefore, using \eqref{theta1} and \eqref{delv}  in \eqref{vsf1}, we finally conclude that  for $\A \ge N(1+||b||_{1,1/2}+||V||_{1,1/2}^{1/2}),$ the following inequality holds
\begin{align}\label{vsf2}
&N^{3/2}\int \s^{1-2\A}\left( w_t + \D(A \n w) +\langle b(x), \n w\rangle+ V(x) w\right)^2 G dX
\\
& \ge \frac{\A}{4N}\int  \s^{1-2\A}  |\n w|^2 G  dX
+ \frac{\A^2}{4N} \int \s^{-2\A}   w^2 G dX
- N \A e^{2\A N}\lambda^{2\A+N} \sup_{t \ge a}\int_{\R^n} [w^2+t|\n w|^2]dx
\notag
\\
& \
+\frac{1}{4N} \int_{B_4\times\{a\}} t \s^{-2\A}   |\n w|^2 G dx - 2 N \A \int_{B_4\times\{a\}} \s^{-2\A}  w^2 G dx. \notag
\end{align}
This completes  the proof of the Lemma \ref{carleman} for time-independent coefficient matrix $A$, drift $b$ and potential $V$.

\vskip 0.2in 
			 
\noindent \textbf{Step 3:} $\boxed{\text{In this step we prove \eqref{carl} for a general}\ $A(x,t)$.}$ One remarkable aspect of this step is that the sharp dependence on the norms of the lower-order coefficients in \textbf{Step 2} will cascade into the relevant estimates. While in this step we closely follow some of the ideas in \cite{EFV} and \cite{AB}, it is important for the reader to keep in mind that our ultimate goal is the sharp quantitative dependence in \eqref{K} in Theorem \ref{main} on the norms of the lower-order coefficients $b(x,t)$ and $V(x,t)$, whereas the work \cite{EFV} contains no treatment of them, and \cite{AB} only treats the case of the zero-order term $V(x,t)$. 

With this being said, as in the \textbf{Step 2} we continue to indicate $b(x), V(x)$ by $b, V$, respectively. Moreover, unless otherwise specified, all space-time integrals will be supported in $B_4 \times [0,\frac{1}{4 \lambda}]$ and, as before, we will refrain from explicitly indicating the domain of integration. We  define 
\[
L_0 w :=w_t + \D(A(x,0) \n w)+\langle b, \n w \rangle+Vw,
\]
and observe that 
\[
L_0w = w_t + \D((A(x,0)-A(x,t))\n w)+\D(A(x,t) \n w)+\langle b, \n w \rangle+Vw.
\]
It thus follows
\begin{align*}
& N \int \s_a^{1-2\A}(w_t + \D(A(x,0) \n w) +\langle b, \n w \rangle +Vw)^2G_a dX
\\
& \le 2 N \int \s_a^{1-2\A}(\D((A(x,0)-A(x,t)) \n w)^2 G_adX 
\\
& +2 N \int \s_a^{1-2\A}(w_t + \D(A(x,t) \n w)+\langle b, \n w \rangle+Vw)^2G_adX.
\notag
\end{align*}
This suggests that, in order to replace $A(x,0)$ by $A(x,t)$  in \eqref{carl}, we must control  
\begin{align*}
N \int \s_a^{1-2\A}(\D((A(x,0)-A(x,t)) \n w)^2 G_adX.
\end{align*}
To simplify the analysis, we further notice that  
\[
\D((A(x,0)-A(x,t)) \n w)=\sum_{i,j=1}^n \left\{\partial_i(a_{ij}(x,0)-a_{ij}(x,t))\p_j w+(a_{ij}(x,0)-a_{ij}(x,t))\p_{ij}w\right\}.
\]
Also, we have from \eqref{ass}
\[
|a_{ij}(x,0)-a_{ij}(x,t)| \le M \sqrt{t}\le M \sqrt{t+a}\le MN\s_a(t).
\]
We thus find
\begin{align}\label{dis}
& N \int \s_a^{1-2\A}(\D((A(x,0)-A(x,t)) \n w)^2 G_adX 
\\
& \le 	NM^2 \int \s_a^{1-2\A} |\n w|^2 G_adX + N^2 M^2 \int \s_a^{2-2\A}|D^2w|^2G_adX.
\notag
\end{align}
At this point, we want to estimate each of the two integrals appearing in the right-hand side of \eqref{dis}. Noting that $NM^2 \int \s_a^{1-2\A} |\n w|^2 G_adX$ can be absorbed in the term $\alpha \int \s_a^{1-2\A} |\n w|^2 G_a$  which appears in the left-hand side of \eqref{carl}, we are thus left with estimating the remaining term $\int \s_a^{2-2\A}|D^2w|^2G_adX$  with the right quantitative dependence on $||b||_{1,1/2}$ and $||V||_{1,1/2}$. This is done as follows. We introduce the operator
\[
H  = \D(A(x, 0) \n ) - \p_t.
\]
With $G$ as in \eqref{G}, we note that, using \eqref{nG}, \eqref{Gt} and the assumption \eqref{ass}, along with $A(0,0)=\mathbb I_n$, we have
\begin{equation}\label{HG}
|H G| \le N\left( \frac{|x|}{t}+ \frac{|x|^3}{t^2}\right)G.
\end{equation}
By a computation similar to \cite[(3.97)-(3.103)]{AB} we have
\begin{align*}
&\frac{6}{4N}	\int \s^{2-2\A}|D^2 w|^2 GdX  \le \int \s^{2-2\A} |\n w|^2 |H G|dX +3\A\int \s^{1-2\A}|\n w|^2 GdX- \int_{\{t=a\}} \s^{2-2\A}|\n w|^2 Gdx \notag\\
& + \int \s^{2-2\A} |\n w|^2 \frac{|x|^2}{16t^2}GdX
+\frac{(1+4nN)\delta^2}{\A}\int \s^{1-2\A} (Lw)^2GdX+\frac{\delta^4}{\A^2}\int \s^{-2\A}|\n V|^2w^2GdX\\
&+\frac{\delta^2}{\A}\int \s^{1-2\A}| V||\n w|^2GdX+2n \int \s^{2-2\A}||b||_{1,1/2}|\n w|^2 GdX +2n \int \s^{2-2\A}||b||_{1,1/2}|\n w||D^2w| GdX,
\notag
\end{align*}
The last term in the right-hand side is estimated using the Cauchy-Schwarz inequality in the following way
\[
2n \int \s^{2-2\A}||b||_{1,1/2}|\n w||D^2w| GdX \leq \frac{6}{8N}\int \s^{2-2\A}|D^2w|^2GdX+\frac{2n^2N^2}{3}||b||_{1,1/2}^2\int \s^{2-2\A}|\n w|^2	GdX.	\]				
We thus obtain
\begin{align}\label{a22}
& \frac{6}{8N}\int \s^{2-2\A}|D^2 w|^2 GdX  \le \int \s^{2-2\A} |\n w|^2 |H G|dX 
\\
& +3\A\int \s^{1-2\A}|\n w|^2 GdX- \int_{\{t=a\}} \s^{2-2\A}|\n w|^2 Gdx + \int \s^{2-2\A} |\n w|^2 \frac{|x|^2}{16t^2}GdX
\notag\\
& +\frac{(1+4nN)\delta^2}{\A}\int \s^{1-2\A} (Lw)^2GdX+\frac{\delta^4}{\A^2}\int \s^{-2\A}|\n V|^2w^2GdX
\notag
\\
&+\frac{\delta^2}{\A}\int \s^{1-2\A}| V||\n w|^2GdX+\left(2n||b||_{1,1/2}+\frac{2n^2N^2}{3}||b||_{1,1/2}^2\right) \int \s^{2-2\A}|\n w|^2 GdX.
\notag
\end{align}
We now take $\A$ larger enough such that $\A \ge N (||V||_{1,1/2}^{1/2}+||b||_{1,1/2}).$ Consequently, using in \eqref{a22} the bounds $||V||_{1,1/2} \le \A^2/N^2$, $||b||_{1,1/2} \le \A/N$,
and  $\s \le \frac{1}{4 \lambda}$, we find
\begin{align}\label{a23}
& \frac{6}{8N}\int \s^{2-2\A}|D^2 w|^2 G dX \le \int \s^{2-2\A} |\n w|^2 |H G| dX
\\
& +3\A\int \s^{1-2\A}|\n w|^2 GdX- \int_{\{t=a\}} \s^{2-2\A}|\n w|^2 Gdx  +\int \s^{2-2\A} |\n w|^2 \frac{|x|^2}{t^2}GdX
\notag\\
& +\frac{(1+4nN)\delta^2}{\A}\int \s^{1-2\A} (L_0w)^2GdX
+\frac{\A^2\delta^4}{N^4}\int \s^{-2\A}w^2GdX\notag\\
& +\frac{\A \delta^2}{N^2}\int \s^{1-2\A}|\n w|^2G dX +\left(\frac{n\A}{2N\lambda}+\frac{2n^2\A^2}{12 \lambda}\right) \int \s^{1-2\A}|\n w|^2 GdX.\notag
\end{align}
Since $\s\le t$ and $x \in B_4$, we have from \eqref{HG}
\begin{align}\label{a24}
& \int \s^{2-2\A} |\n w|^2 |H G|dX+ \int \s^{2-2\A} |\n w|^2 \frac{|x|^2}{t^2}GdX
\\
&\le N \int t \s^{1-2\A} |\n w|^2\left( \frac{|x|}{t}+ \frac{|x|^3}{t^2}+\frac{|x|^2}{t^2}\right)GdX
\notag\\
&\le N \int  \s^{1-2\A} |\n w|^2\left( 1+ \frac{|x|^2}{t}\right)GdX\notag\\
&\le \delta NN^{2\A}\lambda^{2\A+N}\int t|\n w|^2dX+ N \delta^2 \int \s^{1-2\A}\frac{\theta(\lambda t)}{t}|\n w|^2 GdX +N \int \s^{1-2\A}|\n w|^2 GdX,\notag
\end{align}
where the last inequality follows from Lemma \ref{logi}. We  now use \eqref{a24} in \eqref{a23} to find for $\alpha$ large enough  
\begin{align}\label{av25}
&\frac{6}{8N}\int \s^{2-2\A}|D^2 w|^2 G dX \le \delta NN^{2\A}\lambda^{2\A+N}\int t|\n w|^2dX
\\
&+ N \delta^2 \int \s^{1-2\A}\frac{\theta(\lambda t)}{t}|\n w|^2 GdX +4\A\int \s^{1-2\A}|\n w|^2 GdX
\notag\\
&- \int_{\{t=a\}} \s^{2-2\A}|\n w|^2 Gdx +\frac{(1+4nN)\delta^2}{\A}\int \s^{1-2\A} (Lw)^2GdX
+\frac{\A^2\delta^4}{N^4}\int \s^{-2\A}w^2GdX
\notag\\
& +\frac{\A \delta^2}{N^2}\int \s^{1-2\A}|\n w|^2GdX +\left(\frac{n\A}{2N\lambda}+\frac{2n^2\A^2}{12 \lambda}\right) \int \s^{1-2\A}|\n w|^2 GdX.\notag
\end{align} 
Now for large enough $\alpha$ satisfying \eqref{quanta}, if as in the statement of Lemma \ref{carleman} we take
\[
\la = \frac{\A}{\delta^2}, \ \ \ \ \ \ \text{and}\ \ \ \ \ \lambda \le \frac{\theta(\lambda t)}{t},
\]
then it follows from \eqref{av25} that
\begin{align}\label{a25}
&\frac{6}{8N}\int \s^{2-2\A}|D^2 w|^2 G dX \le \delta NN^{2\A}\lambda^{2\A+N}\int t|\n w|^2dX
\\
& +\frac{(1+4nN)\delta^2}{\A}\int \s^{1-2\A} (L_0w)^2GdX  - \int_{\{t=a\}} \s^{2-2\A}|\n w|^2 Gdx 
\notag\\
& +\left(N+4+\frac{\delta^2}{N^2}+\frac{n}{2N}+\frac{2n^2\delta^2}{12} \right)\delta^2 \int \s^{1-2\A}\frac{\theta(\lambda t)}{t}|\n w|^2 GdX+\frac{\A^2\delta^4}{N^4}\int \s^{-2\A}w^2GdX.\notag
\end{align}
It now easily follows from \eqref{a25} that, for a possibly larger $N$, the following inequality holds 
\begin{align*}
&\int \s^{2-2\A}|D^2 w|^2 GdX  \le \delta NN^{2\A}\lambda^{2\A+N}\int t|\n w|^2dX+ N\delta^2 \int \s^{1-2\A}\frac{\theta(\lambda t)}{t}|\n w|^2 G dX\\
&- \int_{\{t=a\}} \s^{2-2\A}|\n w|^2 Gdx +N \delta^2\int \s^{1-2\A} (Lw)^2GdX
+\A^2\delta^4\int \s^{-2\A}w^2GdX.
\end{align*}
This estimate generalises (3.110) in \cite{AB}, where the case $b(x)\equiv 0$ in \eqref{meq} was studied. With this result in hands we can now proceed exactly as in their work and reach the following conclusion 
\begin{align}\label{carl15}
&\A^2 \int_{B_4 \times [0,\frac{1}{4\lambda}) }\s_a^{-2\A}w^2G_adX +\A \int_{B_4 \times [0,\frac{1}{4\lambda}) } \s_a^{1-2\A}|\n w|^2G_adX
\\
& \le N \int_{B_4 \times [0,\frac{1}{4\lambda}) } \s_a^{1-2\A}(w_t+\D(A(x,t) \n w)+\langle b(x,0), \n w \rangle+V(x,0)w)^2 G_a dX \notag\\
& +N^{2\A}\A^{2\A}\underset{t \ge 0}{\operatorname{sup}}\int (w^2
+|\n w|^2) dx+\s(a)^{-2\A}\left(-\frac{a}{N}\int |\n w (x,0)|^2G(x,a)dx + N \A \int w^2(x,0)G(x,a)dx\right),
\notag
\end{align}
which proves \eqref{carl} in the hypothesis of \textbf{Step 3}.			
			
\vskip 0.2in			
			
\noindent \textbf{Step 4:} $\boxed{\text{Replacing}\ $b(x,0)$\ \text{by}\ $b(x,t)$,\ \text{and}\ $V(x,0)$\ \text{by}\ $V(x,t)$\ \text{in}\  \eqref{carl15}.}$ This will complete the proof of Theorem \ref{carleman}. By the Cauchy-Schwarz and triangle inequalities, we have
\begin{align*}
&|w_t + (\D(A(x,t) \n w)+V(x,0)w +\langle b(x,0), \n w \rangle)|^2\\&=|w_t+ (\D(A(x,t) \n w)+V(x,t)w +\langle b(x,t), \n w \rangle)+(V(x,0)-V(x,t))w +\langle b(x,t)-b(x,0), \n w \rangle|^2\\
&\le 3(w_t + \D(A(x,t) \n w)+V(x,t)w +\langle b(x,t), \n w \rangle)^2+3(V(x,0)-V(x,t))^2w^2 + 3|b(x,t)-b(x,0)|^2 |\nabla w |^2.
\end{align*}
By \eqref{ass} we have 
\begin{equation*}
(V(x,0)-V(x,t))^2 \le ||V||_{1,1/2}^2\ t,
\ \ \ \ \ 
|b(x,0)-b(x,t)|^2 \le ||b||_{1,1/2}^2\ t.
\end{equation*}
Using this, we find 
\begin{align}\label{v10}
&N \int \s_a^{1-2\A}(w_t + \D(A(x,t) \n w)+w_t+V(x,0)w +\langle b(x,0), \n w \rangle)^2 G_adX\\ &\le  3N \int \s_a^{1-2\A}(w_t+ \D(A(x,t) \n w)+\langle b(x,t), \n w \rangle +V(x,t)w )^2 G_adX \notag\\&+3N||V||_{1,1/2}^2 \int \s_a^{1-2\A}tw^2 G_adX + 3N ||b||_{1,1/2}^2 \int \s_a^{1-2\A}t|\n w|^2 G_a dX.\notag
\end{align}
We would be done if  we could absorb the last two terms in the right-hand side of \eqref{v10}  into  the left-hand side of \eqref{carl15}. Since $\lambda t \le 1/2$  and $\s_a(t) \le t+a \le 1/\lambda$, we have
\begin{align}\label{kv1}
&3 N||V||_{1,1/2}^2 \int \s_a^{1-2\A}tw^2 G_adX+2N ||b||_{1,1/2}^2 \int \s_a^{1-2\A}t|\n w|^2 G_a dX\\ &   \le 	\frac{ 3N}{4\lambda^2}||V||_{1,1/2}^2 \int \s_a^{-2\A}w^2 G_a dX +\frac{3N}{2\lambda} ||b||_{1,1/2}^2 \int \s_a^{1-2\A} |\nabla w|^2 G_a dX.\notag
\end{align}
Inserting \eqref{v10} and \eqref{kv1} in \eqref{carl15}, and keeping in mind that, for $\delta \leq \frac{1}{\sqrt{2}}$, we have $\alpha \leq \frac{\A}{2\delta^2} = \frac{\lambda}{2}$, we obtain
\begin{align}\label{f2}
&	\A^2 \int \s_a^{-2\A}w^2G_a dX +\A \int \s_a^{1-2\A}|\n w|^2G_adX\\
& \le N \int \s_a^{1-2\A}(w_t + \D(A(x,t) \n w)+V(x,t)w)^2 G_adX\notag\\
&+\frac{ N}{\A^2}||V||_{1,1/2}^2 \int \s_a^{-2\A}w^2 G_adX +\frac{N}{\alpha} ||b||_{1,1/2}^2 \int \s_a^{1-2\A} |\nabla w|^2 G_a dX+N^{2\A}\A^{2\A}\underset{t \ge 0}{\operatorname{sup}}\int [w^2
+|\n w|^2] dx\notag\\
&+\s(a)^{-2\A}\left(-\frac{a}{N}\int |\n w (x,0)|^2G(x,a)dx + N \A \int w^2(x,0)G(x,a)dx\right).\notag
\end{align}
Now observe that the second and third term in the right-hand side of \eqref{f2} can be absorbed in the first and second term of left-hand side of \eqref{f2}, provided that 
\begin{equation*}
\frac{\A^2}{2} \ge \frac{ N}{\alpha^2}||V||_{1,1/2}^2
\ \ \ \text{and}\ \ \ \ 
\frac{\alpha}{2} \geq \frac{N}{\alpha}||b||_{1,1/2}^2.
\end{equation*}
This is ensured if $\A \ge 4N( ||V||_{1,1/2}^{1/2}+||b||_{1,1/2})$. If for $N$ large we thus choose $\A \ge N(||V||_1^{1/2}+ ||b||_{1,1/2}+1)$, the conclusion follows.			
			
\end{proof}

%%%%%%%%%%%%%%%%%%%%%%%%%%%%%%%%%%%%%%%%%%%%%%%%%%%%%%	

\section{Proof of Theorem \ref{main}}\label{s:main}

In order to establish the quantitative uniqueness result in Theorem \ref{main}, we also need the following monotonicity in time result. This is analogous to \cite[Lemma 1]{EFV}, except that in our situation we need an inequality with a precise quantitative dependence on the norms of the drift and the potential. As a help to the reader, we preliminarily set some relevant notation, and mention some critical aspects of what will unfold. Let $u$ be a solution to \eqref{meq} in $Q_4$ such that $u(\cdot, 0) \not \equiv  0$ in $B_1$. For every $\rho \in (0,1]$, we set 
\begin{equation}\label{Thetarho}
\Theta_{\rho}=\frac{\int_{Q_{3}} u^2( x, t)dX}{\rho^2 \int_{B_{\rho}}u^2( x,0)dx},
\end{equation}
and for any $N>1$ also let
\begin{equation}\label{TrhoN}
T_{\rho,N} = \frac{\rho^2}{2N\operatorname{log}(2N(1+||V||_{\infty}+||b||_{\infty}^2)^2{\Theta_{\rho}})+ 5 N^2 (||V||_{\infty}^{1/2}+||b||_{\infty}+1)}.
\end{equation}
The reader should note that, when $\rho =1$, the quantity $\Theta_1$ in \eqref{Thetarho} is different from $\Theta$ in \eqref{T}. We emphasise that the inequality in \eqref{mon1} below will be proved for a general parameter $\rho \in (0,1]$. 
This is important because, finally, the Carleman estimate in \eqref{carl} above will be used, in combination with \eqref{mon1}, to establish  \eqref{gausineq} below. It is this latter inequality which ultimately leads to the desired vanishing order estimate claimed in Theorem \ref{main}. This is precisely  where a choice of the universal parameter $\rho$ is required. 
%For further details on the choice of $\rho$, we refer to \cite[pp. 218]{EFV}, see also \cite[(3.159), pp. 257]{AB}.

\begin{lemma}\label{mon}
Let $u$ be a solution to \eqref{meq} in $Q_4$. Then there exists a universal constant $N>1$ such that, for every $\rho\in (0,1]$ and 
$t\le T_{\rho,N}$, the following inequality holds
\begin{align}\label{mon1}
Ne^{ (||V||_{\infty}^{1/2}+||b||_{\infty})}\int_{B_{2\rho}} u^2(x,t)dx \ge \int_{B_{\rho}} u^2(x,0)dx.
\end{align}
\end{lemma}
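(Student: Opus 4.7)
The plan is to adapt the monotonicity argument of \cite[Lemma 1]{EFV}, keeping careful track of the dependence on the lower-order coefficients $b$ and $V$. The principal object will be the Gaussian-weighted, spatially-cutoff energy
\begin{equation*}
H(t) = \int_{\Rn} (\phi u)^2(x,t)\, G(x,t+a)\,dx,
\end{equation*}
where $\phi\in C^\infty_0(B_4)$ satisfies $\phi\equiv 1$ on $B_3$, and $a>0$ is a parameter (effectively a time-shift of the Gaussian $G$ in \eqref{G}) to be chosen at the end in terms of $\rho,\, ||V||_\infty,\, ||b||_\infty$ and $\Theta_\rho$. The function $G_a(x,t)=G(x,t+a)$ is smooth on $\Rn\times[0,\infty)$ and satisfies the heat equation $\p_t G_a=\Delta G_a$, which is what will drive the argument.

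I would first differentiate $H$ in $t$, use $u_t = -\D(A\n u)-\sa b,\n u\da -Vu$ and integrate by parts in $x$. The second-order terms rearrange via $\p_t G_a=\Delta G_a$, with the discrepancy between $A$ and $\mathbb I_n$ producing Lipschitz errors $O(|x|+\sqrt{t+a})$ that can be absorbed into the positive Dirichlet-type quantity $2\int \phi^2\sa A\n u,\n u\da G_a\,dx$. Young's inequality $2|\phi u\sa b,\n u\da|\le \phi^2|\n u|^2+||b||_\infty^2\phi^2 u^2$ lets this Dirichlet contribution absorb the gradient part of the drift, and together with the bound $|V|\le ||V||_\infty$ one obtains the differential inequality
\begin{equation*}
H'(t)\ge -\bigl(2||V||_\infty+||b||_\infty^2\bigr) H(t) - \mathcal R(t),
\end{equation*}
where $\mathcal R(t)$ gathers the commutators generated by $\n \phi$ (supported on $B_4\setminus B_3$) and the coefficient perturbations. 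Since on $B_4\setminus B_3$ one has $G_a(x,t)\le (t+a)^{-n/2}e^{-9/[4(t+a)]}$, integrating in time gives $\int_0^t \mathcal R(s)\,ds\le C(t+a)^{-n/2}e^{-9/[4(t+a)]}\int_{Q_3} u^2\,dX$, which by \eqref{Thetarho} equals $C(t+a)^{-n/2}e^{-9/[4(t+a)]}\rho^2\Theta_\rho\int_{B_\rho}u^2(x,0)\,dx$. Gr\"onwall's inequality then yields
\begin{equation*}
H(t)\ge e^{-(2||V||_\infty+||b||_\infty^2)t}H(0) - C(t+a)^{-n/2}e^{-9/[4(t+a)]}\rho^2\Theta_\rho\int_{B_\rho}u^2(x,0)\,dx.
\end{equation*}

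Next I would choose $a\sim \rho^2$, so that $G(x,a)\ge c\, a^{-n/2}$ on $B_\rho$, giving $H(0)\ge c\rho^{-n}\int_{B_\rho}u^2(x,0)\,dx$; and bound $H(t)$ from above by $(t+a)^{-n/2}\int_{B_{2\rho}}u^2(x,t)\,dx$ plus a Gaussian tail on $B_4\setminus B_{2\rho}$ that is absorbed just as $\mathcal R$ was, using Lemma \ref{len} and \eqref{Thetarho}. The time threshold $T_{\rho,N}$ in \eqref{TrhoN} is dictated by two competing requirements, one for each summand in its denominator: the Gr\"onwall exponent $(2||V||_\infty+||b||_\infty^2) t\lesssim (||V||_\infty^{1/2}+||b||_\infty)^2 t$ must stay $\lesssim ||V||_\infty^{1/2}+||b||_\infty$, producing precisely the factor $e^{||V||_\infty^{1/2}+||b||_\infty}$ in \eqref{mon1}; and the Gaussian tail $(t+a)^{-n/2}e^{-9/[4(t+a)]}\Theta_\rho$ must be majorised by a universal constant, which forces $t+a\lesssim \rho^2/\log\Theta_\rho$ and explains the $\log\Theta_\rho$ summand. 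The principal technical obstacle I foresee, apart from this parameter balancing, is verifying that the variable-coefficient and Lipschitz errors produced by $A(x,t)\neq \mathbb I_n$ when rearranging the second-order terms do not inflate either the exponential factor or the time window beyond \eqref{TrhoN}; this is handled exactly as in \textbf{Step 3} of the proof of Theorem \ref{carleman}, by combining the Lipschitz bound $|A(x,t)-\mathbb I_n|\le M(|x|+\sqrt{t+a})$ with the Gaussian localisation in $G_a$.
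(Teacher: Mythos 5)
Your overall scheme (a Gaussian-weighted energy $H(t)$, a differential inequality, Gr\"onwall, and the two-way parameter balancing that produces $T_{\rho,N}$ and the factor $e^{||V||_\infty^{1/2}+||b||_\infty}$) matches the paper's, and your treatment of the drift, the potential, the annulus errors via Lemma \ref{len} and $\Theta_\rho$, and the choice $a\sim\rho^2$ are all sound. The gap is in your use of the \emph{flat} heat kernel $G(x,t+a)$ in place of a kernel adapted to the operator. When you rearrange the second-order terms, the identity $\p_t G_a=\Delta G_a$ does not match $\operatorname{div}(A\nabla\,\cdot\,)$, and after integrating by parts you are left with the commutator
\begin{equation*}
2\int \phi^2 u\,\langle (A-\mathbb I_n)\nabla u,\nabla G_a\rangle\,dx
= -\int \phi^2 u\,\Big\langle (A-\mathbb I_n)\nabla u,\tfrac{x}{t+a}\Big\rangle G_a\,dx .
\end{equation*}
Since $|A-\mathbb I_n|\le M(|x|+\sqrt{t+a})$, the integrand carries the weight $\tfrac{|x|^2}{t+a}$, which is unbounded on $B_4$ and is \emph{not} dominated by $G_a$; Cauchy--Schwarz against the Dirichlet form $2\int\phi^2\langle A\nabla u,\nabla u\rangle G_a$ either leaves you with $\int\phi^2|\nabla u|^2\tfrac{|x|^2}{t+a}G_a$ (which needs second derivatives of $u$ via a Hardy-type inequality) or, after a second integration by parts, with $\int\phi^2 u^2\big(\tfrac{|x|^3}{(t+a)^2}+\dots\big)G_a$, whose absorption produces a gradient term with a constant of size $M^2$ that is not small relative to the ellipticity constant. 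Your appeal to ``Step 3 of Theorem \ref{carleman}'' does not rescue this: there the analogous terms are tamed by the large parameter $\alpha$ and the coercive $\theta(\lambda t)/t$ weights from the ODE \eqref{ode}, neither of which is available in the monotonicity lemma.

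The paper avoids the commutator altogether: it takes $\mathcal G(x,y,t)$ to be the fundamental solution of $\p_t+E_0$ with $E_0f=-\operatorname{div}(A\nabla f)$ and pole at $(y,0)$, sets $H(t)=\int w^2\mathcal G\,dx$, so that the second-order term integrates by parts \emph{exactly} to the nonnegative quantity $2\int\langle A\nabla w,\nabla w\rangle\mathcal G\,dx$. The only inputs then needed are the Aronson/Friedman Gaussian upper bounds for $\mathcal G$ (to control the annulus terms coming from $\nabla\phi$), the limit $H(t)\to u(y,0)^2$ as $t\to0^+$ (replacing your shift $a$), and the conservation property $P_t1=1$ to convert the final integration over $y\in B_1$ into $\int_{B_2}u^2(x,t)\,dx$. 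To repair your argument you would either have to adopt this adapted kernel, or carry out a genuinely quantitative treatment of the $\tfrac{|x|^2}{t+a}$-weighted terms (e.g.\ splitting $\{|x|\lesssim\sqrt{(t+a)\log\tfrac1{t+a}}\}$ from its complement), which is substantial extra work that your sketch does not supply.
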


\begin{proof}
In view of rescaling, it is enough to prove \eqref{mon1} for $\rho=1$. In what follows, without loss of generality, we assume that $A$ be defined on the whole of $\R^n \times \R$, and satisfies  the bounds in \eqref{ass}. For $t>0$  consider the semigroup $P_t = e^{t E_0}$ associated with $E_0 f = -  \operatorname{div}(A\n f)$. It is well-known that 
\begin{itemize}
\item[(i)] $P_t 1 = 1$;
\item[(ii)] $P_t f\to f$, as $t\to 0^+$,
\end{itemize}
see \cite[Chapter 1]{Fr}.
Let now $\phi \in C^{\infty}_0(B_2)$ be such that $0 \le \phi \le 1$ in $B_2$ and $\phi = 1$ in $B_{3/2}$, and set $w(x,t)= u(x,t) \phi(x)$. For a given $y\in \Rn$, we introduce the function 
\[
H(t) = P_t(w^2)(y) = \int_{\Rn} w(x,t)^2 \mathcal G(x,y,t) dx,
\] 
where $\mathcal G(x,y,t)$ denotes the fundamental solution, with pole at $(y,0)$ of the operator $\p_t + E_0$.
Clearly, for every $y\in B_1$ we have from (i) and (ii)
\begin{equation}\label{Hzero}
H(t) \underset{t\to 0^+}{\longrightarrow} w(y,0)^2 = u(y,0)^2.
\end{equation}
With $L_0 = \p_t - E_0$, we easily find
\begin{align}\label{pth}
H'(t) &= 2\int_{\R^n} w\ L_0 w\ \mathcal G dx + 2\int_{\R^n} \langle A \n w, \n w \rangle \mathcal G dx.
\end{align}
Since $u$ solves \eqref{meq}, a computation gives
\begin{align*}
L_0 w & = \phi L_0 u + u L_0\phi + 2\sa A\n u,\n \phi\da 
\\
& = - \phi \sa b,\n u\da - V w + u\D(A \n \phi ) + 2\sa A\n u,\n \phi\da 
\\
&=- \langle b, \n w \rangle - Vw +\langle b, \n \phi \rangle u +   u\D(A \n \phi ) +  2 \langle A \n u, \n \phi \rangle.
\end{align*}
Replacing this identity in \eqref{pth} and using the support property of $\phi$, we find 
\begin{align}\label{pth1}
H'(t) \ge   &- 2 ||V||_{\infty} H(t) -2\int_{\R^n}w \langle b, \n w \rangle \mathcal G dx +2 \int_{\R^n} \langle A \n w, \n w \rangle \mathcal G dx
\\
&-  N(1+||b||_{\infty})\int_{B_2\setminus B_{3/2}} |w|(|u|+|\n u|)\mathcal Gdx.
\notag
\end{align}
With $\Lambda$ as in \eqref{ell}, the numerical inequality $|2\alpha\beta|\le \ve \alpha^2 + \ve^{-1} \beta^2$, with $\ve = \frac1{\sqrt{2 \Lambda}}$, yields 
\[
 |w \langle b, \n w \rangle | \le \Lambda ||b||_{\infty}^2 w^2   + \frac1{4 \Lambda} |\n w|^2.
 \] 
Using this estimate, along with $\langle A \n w, \n w \rangle \ge \Lambda^{-1} |\n w|^2$, in \eqref{pth1}, we find
\begin{align*}
H'(t) \ge   &- N(||V||_{\infty}+||b||^2_{\infty}) H(t) -  N(1+||b||_{\infty})\int_{B_2\setminus B_{3/2}} |w|(|u|+|\n u|)\mathcal Gdx.
\end{align*}
By the Gaussian bounds for $\mathcal{G}$ in \cite{Fr}, we thus obtain
\begin{align*}
H'(t) \ge - N(||V||_{\infty}+||b||^2_{\infty}) H(t) - N (1+||b||_{\infty}) t^{- \frac n2} e^{-\frac{1}{4Nt}}\int_{B_2\setminus B_{3/2}} (|u|^2+|\n u|^2) dx.
\end{align*}
By Lemma \ref{len}, we now deduce from the latter inequality (for a new $N$ large) 
\begin{align*}
H'(t) \ge - N(||V||_{\infty}+||b||^2_{\infty}) H(t)  - N (1+||V||_{\infty}+||b||^2_{\infty})^2 t^{- \frac n2}e^{-\frac{1}{4Nt}}\int_{Q_3} u^2dX.
\end{align*}
Since for sufficiently small $t$ one has  $t^{n/2}e^{-\frac{1}{4Nt}} \le  e^{-\frac{1}{8Nt}}$, we obtain 
\begin{align*}
H'(t) \ge - N(||V||_{\infty}+||b||^2_{\infty}) H(t)  - N (1+||V||_{\infty}+||b||^2_{\infty})^2 e^{-\frac{1}{Nt}}\int_{Q_3} u^2dX.
\end{align*}
Integrating this inequality with respect to $t$, and using \eqref{Hzero}, we find
\begin{align*}
&e^{N(||V||_{\infty}+||b||^2_{\infty}) t} \int w^2(x,t) \mathcal G(x,t;y,0) dx\\
& \ge u^2(y,0) - e^{N(||V||_{\infty}+||b||^2_{\infty}) t} (1+||V||_{\infty}+||b||^2_{\infty})^2Nte^{-\frac{1}{Nt}}\int_{Q_3} u^2dX.
\end{align*}
We now integrate with respect to the variable $y\in B_1$, change the order of integration in the left-hand side, and finally  use (i) above  to obtain 
\begin{align}\label{int2}
&e^{N(||V||_{\infty}+||b||^2_{\infty}) t}\int_{B_2} u^2(x,t) dx \ge e^{N(||V||_{\infty}+||b||^2_{\infty}) t} \int_{\R^n} w^2(x,t) dx\\
& \ge \int_{B_1}u^2(x,0) dx -e^{N (||V||_{\infty}+||b||^2_{\infty}) t}  (1+||V||_{\infty}+||b||^2_{\infty})^2Nte^{-\frac{1}{Nt}}\int_{Q_3} u^2dX.\notag
\end{align}
Let $D$ be the universal constant in Lemma \ref{len}. We now choose $N>1$ universal (in particular,  independent of $||V||_{\infty}$ and $||b||_{\infty}$) such that $2N/D >1$ and $ N \log(2N/D)>1.$
Notice that, from \eqref{eq:len},  we find 
\[
1 \le D(1+||V||_{\infty}+||b||_{\infty}^2)\frac{\int_{Q_3} u^2(x,t)dX}{\int_{B_1}u^2(x,0)dx}=D(1+||V||_{\infty}+||b||_{\infty}^2)\Theta_1.
\]  
Thus we have $N\log (2N(1+||V||_{\infty}+||b||_{\infty}^2)^2\Theta_1)= N\log (2N/D)+N\log (D(1+||V||_{\infty}+||b||_{\infty}^2)^2\Theta_1)>1.$  We now rewrite  $e^{N(||V||_{\infty}+||b||_{\infty}^2) t} (1+||V||_{\infty}+||b||_{\infty}^2)^2Ne^{-\frac{1}{Nt}}\int_{Q_3} u^2(x,t)dX$	as	\begin{align}\label{mon2}
&e^{N(||V||_{\infty}+||b||_{\infty}^2) t} (1+||V||_{\infty}+||b||_{\infty}^2)^2Ne^{-\frac{1}{Nt}}\int_{Q_3} u^2(x,t)dX \\
&= e^{N(||V||_{\infty}+||b||_{\infty}^2) t} e^{-\frac{1}{2Nt} } (1+||V||_{\infty}+||b||_{\infty}^2)^2Ne^{-\frac{1}{2Nt}}\int_{Q_3} u^2(x,t)dX.\notag
\end{align}
Now let \begin{equation}\label{small1} t\leq  \left(2N\log(2N(1+||V||_{\infty}+||b||_{\infty}^2)^2\Theta_1) + 5 N^2 ([||V||_{\infty}+||b||_{\infty}^2]^{1/2}+1)\right)^{-1} = T_{1,N},\end{equation}
see the definition \eqref{TrhoN}.
 \eqref{small1} in particular implies that $t \leq \left(5 N^2 ([||V||_{\infty}+||b||_{\infty}^2]^{1/2}+1)\right)^{-1}$, using which we find
\begin{equation}\label{intert1}
e^{N(||V||_{\infty}+||b||_{\infty}^2) t} e^{-\frac{1}{2Nt} }\leq e^{\frac{1}{5N} (||V||_{\infty} + ||b||_{\infty}^2)^{1/2}} e^{-\frac{5N}{2} (||V||_{\infty} + ||b||_{\infty}^2)^{1/2}} \leq 1.
\end{equation}	
Also  \eqref{small1} implies that 
\[
t \leq \frac{1}{2N\log(2N(1+||V||_{\infty}+||b||_{\infty}^2)^2\Theta_1)},
 \]
 which in turn gives
\begin{equation}\label{intert2}
(1+||V||_{\infty}+||b||_{\infty}^2)^2Ne^{-\frac{1}{2Nt}}\int_{Q_3} u^2(x,t)dX  \le	\frac{1}{2} \int_{B_1}u^2(x,0)dx.
\end{equation}
Using \eqref{intert1} and \eqref{intert2} in \eqref{int2}, we thus find
\begin{align}\label{for1}
&e^{N(||V||_{\infty}+||b||^2_{\infty}) t}\int_{B_2} u^2(x,t) dx \le \frac{1}{2} \int_{B_1}u^2(x,0)dx.
\end{align}
Also, similarly to \eqref{intert1}, if we use \eqref{small1} we obtain
\begin{equation}\label{for2}
e^{N(||V||_{\infty}+||b||^2_{\infty}) t} \le 	e^{\frac{1}{5N} (||V||_{\infty} + ||b||_{\infty}^2)^{1/2}}.
\end{equation}
Exploiting \eqref{for2} in \eqref{for1}, we finally obtain 
\begin{align*}
N e^{  ||V||_{\infty}^{1/2}+||b||_{\infty}}\int_{B_2} u^2(x,t)dx \ge \int_{B_1}u^2(x,0)dx,
\end{align*}
for 
\[
0< t \le \left(2N\log[2N(1+||V||_{\infty}+||b||_{\infty}^2)^2\Theta_1] + 5 N^2 ([||V||_{\infty}+||b||_{\infty}^2]^{1/2}+1)\right)^{-1},
\]
where $\Theta_1=\frac{\int_{Q_3} u^2dX}{\int_{B_1}u^2(x,0)dx}.$ This completes the proof of the lemma.	

\end{proof}

We are finally ready to present the 
\begin{proof}[Proof of Theorem \ref{main}] 
Let $u$ be as in the Theorem \ref{main}. We pick 
$\psi \in C^{\infty}_{0}(B_4)$, such that $\psi \equiv 1$ in $B_1$,  $\psi =0$ outside $B_{2}$. Next,  for large $\alpha$, to be later determined, we let $\lambda=\frac{\alpha}{\delta^2}$, and pick $\phi \equiv 1$ in $0\le t \le 1/(8\lambda)$ and $\phi =0$ for $t \ge {1}/({4\lambda})$. With this choices, we consider the function $w(x,t)=u(x,t)\psi(x) \phi(t)$. We obtain from \eqref{meq}
$$w_t + \D(A\n w)+ \langle b, \n w \rangle + Vw = \D(A \n \psi)u \phi + 2 \langle A \n \psi, \n u \rangle \phi  + u \psi \phi_t +\langle b, \n \psi \rangle u\phi.$$
From the support properties of $\psi$ and $\phi$, it thus follows 	
\begin{align*}
\big(w_t + \D(A\n w)+\langle b, \n w \rangle +Vw\big)^2 \le C (1+||b||^2_{\infty})(u^2 +|\n u|^2)\mathbf 1_{B_2\setminus B_{1}}(x) +C\lambda^2u^2 \mathbf 1_{(0,1/4\lambda)\setminus (0,1/8\lambda)}(t), 
\end{align*}
where $\mathbf 1_E$ denotes the indicator function of the set $E$.
With the Carleman estimate \eqref{carl} in Theorem \ref{carleman} and with Lemma \ref{mon} in hand, we can now repeat the arguments in \cite{EFV} (see also  \cite[Proposition 3.7]{AB}) to deduce that, for some $\rho \in (0,1)$ universal (depending on $n$, and the constants $\Lambda$ in \eqref{ell}, and $M$ in \eqref{ass}), the following inequality holds
\begin{align}\label{gausineq}
2a\int |\n w (x,0)|^2G(x,a)dx + \frac{n}{2} \int w^2(x,0)G(x,a)dx\le N^3\A_0 \int w^2(x,0)G(x,a)dx
\end{align}
with 
\[
\A_0=  N \log(2N(1+||V||_{\infty}+||b||_{\infty}^2)^2\Theta_{\rho})+N(||V||_{1,1/2}^{1/2}+||b||_{1,1/2}+1),
\]
where $\Theta_{\rho}$ is as in \eqref{Thetarho}.
We now use Lemma \ref{do} to obtain for all $r<1/2$
\begin{align}\label{doi}
\int_{B_{2r}} u^2(x,0)dx \le Q \int_{B_r} u^2(x,0)dx,
\end{align}
where 
\[
Q=\exp(N^4 \operatorname{log}(2N(1+||V||_{\infty})\Theta_{\rho})+N^4(||V||_1^{1/2}+||b||_{1/2}^{1/2}+1)).
\]
In order to now replace $\Theta_{\rho}$ in \eqref{doi} by the universal normalising quantity $\Theta$ defined by \eqref{T}, we use a covering argument as in \cite{EFV}, or \cite[Theorem 1]{AB}, finally obtaining the following space-like doubling inequality
\begin{align}\label{dub1}
\int_{B_{2r}} u^2(x,0)dx \le \tilde Q \int_{B_r} u^2(x,0)dx,
\end{align}  
where 
\[
\tilde Q= \exp({N \operatorname{log}(N\Theta)+N(||V||_{1,1/2}^{1/2}+||b||_{1,1/2}+1)}),
\]
and $\Theta$ is as in \eqref{T}. At this point, the desired estimate \eqref{df} follows from \eqref{dub1} in the standard way (see, for instance,  (3.140)-(3.146) in \cite{AB}). This completes the proof of the theorem.
		
\end{proof}	
		
In closing, we mention a  quantitative two-ball/one-cylinder inequality, Proposition \ref{2sphere1} below, that will be needed in the proof of the Landis type decay in Section \ref{landis}. Its proof follows from the doubling inequality \eqref{dub1}, and since it is identical to that of \cite[Proposition 3.8]{AB}, we refer the reader to that source. 		
\begin{prop}\label{2sphere1}
Let $u$ be as in Theorem \ref{main}. Then, with 
\[
M_1=N\operatorname{log}(2N(1+||V||_{\infty} +||b||_{\infty}^2))+N(||V||_{1,1/2}^{1/2}+||b||_{1,1/2}+1),
\]
for every $0<r\le 1$ we have 
\[
\int_{B_{2}}u^2(x,0)dx \le e^{\frac{ M_1 \operatorname{log}_2(2/r)}{1+N\operatorname{log}_2(2/r)}} \left(\int_{B_r}u^2(x,0)dx\right)^{\frac{1}{1+N\operatorname{log}_2(2/r)}}\left(N\int_{Q_{4}} u^2( x, t)dX\right)^{\frac{N\operatorname{log}_2(2/r)}{1+N\operatorname{log}_2(2/r)}}. 
\]
\end{prop}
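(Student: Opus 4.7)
\textbf{Proof strategy for Proposition \ref{2sphere1}.}
The plan is to deduce the two-ball/one-cylinder estimate from the space-like doubling inequality \eqref{dub1} by dyadic iteration followed by a weighted interpolation with a parabolic a-priori bound, mirroring the argument of \cite[Proposition 3.8]{AB}. The main point is that the doubling constant $\tilde Q=\exp(N\log(N\Theta)+N(||V||_{1,1/2}^{1/2}+||b||_{1,1/2}+1))$ still carries the solution-dependent ratio $\Theta=\int_{Q_4}u^2\,dX/\int_{B_1}u^2(x,0)\,dx$, and this $\Theta$ must be eliminated in favour of the quantities appearing in the statement.

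First I would iterate \eqref{dub1} along the dyadic chain $r,2r,4r,\ldots$ until the radius first exceeds $2$. With $L=\log_2(2/r)$ this produces
\begin{equation*}
\int_{B_2}u^2(x,0)\,dx\le \tilde Q^{\lceil L\rceil}\int_{B_r}u^2(x,0)\,dx.
\end{equation*}
Because $r\le 1$, the inclusion $B_r\subseteq B_1$ yields $\Theta\le \int_{Q_4}u^2\,dX/\int_{B_r}u^2(x,0)\,dx$, so the factor $\tilde Q^L$ can be bounded by a constant of the form $\exp(LN(||V||_{1,1/2}^{1/2}+||b||_{1,1/2}+1))\cdot(N\int_{Q_4}u^2\,dX/\int_{B_r}u^2(x,0)\,dx)^{NL}$, an upper bound depending only on $\int_{B_r}u^2(x,0)\,dx$, $\int_{Q_4}u^2\,dX$, and coefficient norms.

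Next I would introduce the parabolic a-priori bound $\int_{B_2}u^2(x,0)\,dx\le N(1+||V||_\infty+||b||_\infty^2)\int_{Q_4}u^2\,dX$, obtained by integrating the pointwise $L^\infty$ estimate of Lemma \ref{len} over $B_2\times\{0\}$. Denoting $A=\int_{B_2}u^2(x,0)\,dx$, the interpolation is performed by writing $A^{1+NL}=A\cdot A^{NL}$, bounding the first factor by the iterated doubling and the second by the $NL$-th power of the a-priori bound. After substituting the bound for $\Theta$, the resulting right-hand side collapses into a product of $\int_{B_r}u^2(x,0)\,dx$ and a power of $N\int_{Q_4}u^2\,dX$, producing
\begin{equation*}
A^{1+NL}\le e^{LM_1}\int_{B_r}u^2(x,0)\,dx\cdot\left(N\int_{Q_4}u^2\,dX\right)^{NL}.
\end{equation*}
Taking the $(1+NL)$-th root yields the claimed inequality.

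The main obstacle, as in \cite{AB}, is the bookkeeping of exponents: one has to check that the interpolation weight $\theta=1/(1+NL)$ precisely balances the $\Theta$-substitution in the iterated doubling estimate against the parabolic a-priori bound, so that the exponent $M_1$ in the exponential arises exactly as $N\log(2N(1+||V||_\infty+||b||_\infty^2))+N(||V||_{1,1/2}^{1/2}+||b||_{1,1/2}+1)$, with the logarithmic piece coming from the a-priori bound and the remainder from $\tilde Q$. Once this balancing is verified, the conclusion follows identically to \cite[Proposition 3.8]{AB}.
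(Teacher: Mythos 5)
Your overall route (iterate the doubling inequality \eqref{dub1}, then interpolate with the parabolic a-priori bound from Lemma \ref{len}) is the one the paper intends -- it gives no proof itself and defers to \cite[Proposition 3.8]{AB}, saying only that the statement follows from \eqref{dub1}. But the exponent bookkeeping you describe, which you correctly identify as the crux, does not close up, and the gap is not cosmetic. The problem is the step where you replace $\Theta$ by $\int_{Q_4}u^2\,dX/\int_{B_r}u^2(x,0)\,dx$ inside $\tilde Q^{\lceil L\rceil}$. Writing $I_r=\int_{B_r}u^2(x,0)\,dx$, $E=\int_{Q_4}u^2\,dX$ and $A=\int_{B_2}u^2(x,0)\,dx$, your first factor becomes
\[
A\ \le\ \tilde Q^{\lceil L\rceil}I_r\ \le\ e^{N M\lceil L\rceil}\left(\frac{NE}{I_r}\right)^{N\lceil L\rceil}I_r\ =\ e^{NM\lceil L\rceil}(NE)^{N\lceil L\rceil}\,I_r^{\,1-N\lceil L\rceil},
\]
so multiplying by $A^{NL}\le (N(1+\|V\|_\infty+\|b\|_\infty^2)E)^{NL}$ leaves you with $I_r^{\,1-N\lceil L\rceil}(NE)^{N\lceil L\rceil+NL}$ on the right, i.e. an uncancelled factor $(NE/I_r)^{N\lceil L\rceil}\ge 1$ relative to the display you claim. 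This factor is solution-dependent and unbounded, so your final inequality $A^{1+NL}\le e^{LM_1}I_r(NE)^{NL}$ does not follow from the steps preceding it; after taking the $(1+NL)$-th root you would only recover a bound carrying an extra $NE/I_r$, which destroys the statement.

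The reason the correct argument works is precisely that the denominator of $\Theta$ is $\int_{B_1}u^2(x,0)\,dx$ -- the same quantity that sits at the top of the doubling chain -- and must be kept there. Iterating \eqref{dub1} only up to $B_1$ gives $I_1\le \tilde Q^{k}I_r$ with $k\le L$, and substituting $\Theta=E/I_1$ \emph{exactly} produces $I_1\le e^{NMk}(NE)^{Nk}I_1^{-Nk}I_r$; the negative power of $I_1$ is then absorbed into the \emph{left}-hand side, yielding the self-improving form $I_1^{1+Nk}\le e^{NMk}(NE)^{Nk}I_r$, which is exactly the interpolation inequality with weight $1/(1+Nk)$. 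By weakening $\Theta\le E/I_r$ you move that negative power onto $I_r$ on the right, where nothing can absorb it. A secondary point: \eqref{dub1} is only available for $r<1/2$ (see \eqref{doi}), so the dyadic chain reaches $B_1$, not $B_2$; passing from $\int_{B_1}$ to $\int_{B_2}$ on the left is an additional step (this is where the $L^\infty$ bound of Lemma \ref{len} and the $\log(2N(1+\|V\|_\infty+\|b\|_\infty^2))$ contribution to $M_1$ enter), and it has to be meshed with the interpolation exponents rather than appended as a crude power of the a-priori bound. You should rework the proof starting from $I_1^{1+Nk}\le e^{NMk}(NE)^{Nk}I_r$.
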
	
	
%%%%%%%%%%%%%%%%%%%%%%%%%%%%%%%%%%%%%%%%%%%%%%%%%%%%%

\vskip 0.2in

\section{Proof of Theorem \ref{maindf}}\label{dof}
	
In this section we prove Theorem \ref{maindf}. We will need the following consequence of the analyticity estimates in \cite{Ei}.	
	
\begin{thrm}\label{anal}
Let $A(x,t)$ satisfy  \eqref{ell}, \eqref{ass}, and assume in addition that $x\to A(x, t)$ be real-analytic, uniformly with respect to $t$ in $\Rn \times \R$. Let $u$ solve \eqref{real1} in $Q_4$. Then there exists a universal $r_0 \in (0, 1/4)$ such that $u(\cdot, 0)$ can be extended holomorphically  to some $w$ in the complex ball $B^{\mathbb C^n}_{r_0}$, and moreover the following estimate holds:
\begin{equation}\label{estan1}
\sup_{z \in B^{\mathbb C^n}_{r_0}} |w(z)| \leq e^{C(\sqrt \la +1)} \left(\int_{Q_{4}} u^2 dX \right)^{1/2}.\end{equation}
\end{thrm}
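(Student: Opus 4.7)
The plan is to combine Eidelman's interior analyticity estimates \cite{Ei} for parabolic equations with real-analytic coefficients together with a parabolic rescaling tuned to the zero-order coefficient $\lambda$, and then to propagate the resulting local analytic extension to a universal complex ball; the sharp $e^{C\sqrt{\lambda}}$ growth is the parabolic counterpart of the Donnelly--Fefferman eigenfunction bound.

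First I perform a gauge transform: set $v(x,t)=e^{\lambda t}u(x,t)$, so that $v$ solves the homogeneous equation
\[
v_t + \D(A(x,t)\nabla v) = 0 \quad\text{in } Q_4,\qquad v(\cdot,0)=u(\cdot,0).
\]
On any initial parabolic cylinder $B_r\times[0,r^2]$ with $\lambda r^2\le 1$ one has $\|v\|_{L^2(B_r\times[0,r^2])}\le e\,\|u\|_{L^2(Q_4)}$, so the gauge change is harmless provided we work on the parabolic scale $r\sim\lambda^{-1/2}$.

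Second, I apply Eidelman's analyticity estimates to $v$ on $B_r\times[0,r^2]$. Rescaling this cylinder to unit size reduces matters to a parabolic equation with analytic coefficients having uniform bounds, so Eidelman yields Cauchy-type bounds of the form
\[
|D_x^\alpha v(0,0)|\le C_0^{|\alpha|+1}\,|\alpha|!\,r^{-|\alpha|-(n+2)/2}\,\|v\|_{L^2(B_r\times[0,r^2])},
\]
for every multi-index $\alpha$, where $C_0$ depends only on $n$, on $\Lambda$ in \eqref{ell} and on the analyticity parameters of $A$. Substituting $r=c\lambda^{-1/2}$ and summing the Taylor series at the origin then produces a holomorphic extension $w$ of $u(\cdot,0)=v(\cdot,0)$ on the complex ball $B^{\mathbb C^n}_{r_1}$ with $r_1\sim\lambda^{-1/2}$, satisfying
\[
\sup_{|z|\le r_1}|w(z)|\le C\lambda^{(n+2)/4}\,\|u\|_{L^2(Q_4)}.
\]

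Third, I repeat this construction at every real centre $x_0\in B_{r_0}$. The parabolic cylinder $B_r(x_0)\times[0,r^2]$ is still contained in $Q_4$ (since $r_0<1/4$ and $r\ll 1$), so the same Eidelman--Cauchy step provides a holomorphic extension of $u(\cdot,0)$ on $x_0+B^{\mathbb C^n}_{r_1}$. By uniqueness of analytic continuation the patches glue into a holomorphic function on the tube $B_{r_0}+iB_{r_1}$. To extend from this tube of imaginary width $\lambda^{-1/2}$ to the full complex ball $B^{\mathbb C^n}_{r_0}$, I propagate the bound across $O(\sqrt{\lambda})$ overlapping complex balls of radius $\lambda^{-1/2}$ by repeated Cauchy-type estimates in the imaginary direction, each step multiplying the sup by a fixed universal factor. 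Accumulating the universal factor across the $O(\sqrt{\lambda})$ links produces exactly the $e^{C(\sqrt{\lambda}+1)}$ factor in \eqref{estan1}.

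The main obstacle is the propagation step. One has to arrange the iteration so that the geometric overlap between consecutive complex balls is independent of $\lambda$ while the per-step multiplicative loss stays bounded by a universal constant, so that the total growth comes out exponential in $\sqrt{\lambda}$ rather than in $\lambda$. This is the quantitative mechanism that ties our parabolic estimate to the classical Donnelly--Fefferman elliptic bound, and the balance $\lambda r^2=O(1)$ dictated by the gauge transform in the first step is precisely what forces the correct rate $\sqrt{\lambda}$.
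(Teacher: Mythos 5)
There is a genuine gap in your third step. Your first two steps (the gauge change $v=e^{\lambda t}u$ and Eidelman's estimates on cylinders of parabolic size $r\sim\lambda^{-1/2}$) correctly produce, at each real centre $x_0\in B_{r_0}$, a holomorphic extension of $u(\cdot,0)$ of radius only $r_1\sim\lambda^{-1/2}$, hence a holomorphic function on the tube $B_{r_0}+iB^{\mathbb C^n}_{r_1}$. But the passage from this tube to the full complex ball $B^{\mathbb C^n}_{r_0}$ of \emph{universal} radius cannot be accomplished by ``repeated Cauchy-type estimates in the imaginary direction.'' Cauchy estimates bound derivatives of a holomorphic function on a smaller set in terms of its supremum on a larger set where it is already known to be holomorphic; they never enlarge the domain of holomorphy. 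At a complex point $z_0$ with $|\operatorname{Im} z_0|\sim r_1/2$ you have no PDE satisfied by $w$, so the Taylor series of $w$ at $z_0$ is only guaranteed to converge up to the boundary of the tube you already control, and your iteration never leaves that tube. The fact that the extension to a universal ball exists at all (with growth $e^{C\sqrt\la}$, as for $\cos(\sqrt\la\, z)$) is precisely the nontrivial content of the theorem, and your argument does not supply a mechanism for it.

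The paper obtains the universal radius by a different device, the dimension-lifting trick of Lin and Jerison--Lebeau: set $v(x,y,t)=e^{\sqrt\la\, y}u(x,t)$, which solves the homogeneous equation $v_t+\D(\tilde A\n v)=0$ in $B_4\times\{|y|<4\}\times[0,16)$ for the block-diagonal, real-analytic, uniformly elliptic matrix $\tilde A=\operatorname{diag}(A,1)$. Since this equation carries no $\la$ whatsoever, Eidelman's analyticity estimates apply on a domain of universal size and yield a holomorphic extension of $v(\cdot,\cdot,0)$ in a complex ball $B^{\mathbb C^{n+1}}_{r_0}$ with $r_0$ universal; the entire factor $e^{C(\sqrt\la+1)}$ arises simply from $\|v\|_{L^2}\le e^{4\sqrt\la}\,C\|u\|_{L^2(Q_4)}$, and restriction to $y=0$ gives \eqref{estan1}. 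If you wish to salvage your outline, you should replace your propagation step by this lifting argument (or an equivalent one); as written, the proposal proves only an extension to a tube of imaginary width $O(\lambda^{-1/2})$, which is not what the theorem asserts.
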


\begin{proof}
For $y \in \R$ we consider the function $v(x,y,t)=e^{\sqrt \la\ y}u(x,t)$. One easily verifies that $v$ solves in $B_4 \times \{|y| < 4\} \times [0, 16)$
\[
v_t + \D( \tilde A(x,y,t) \n v)  =0,
\]
where $\tilde A= (\tilde a_{ij})$ is the following $(n+1) \times (n+1)$ uniformly elliptic,  real-analytic, matrix-valued function 
\[
\begin{cases}
\tilde a_{ij}= a_{ij},\ \ \ \ \ \ \ \ \ \ \ \ \ \ \ \ \ \ \ i, j =1, 2,...,n,
\\
\tilde a_{(n+1)i}=\tilde a_{i(n+1)}=0,\ \ \  i=1, 2,...,n,
\\
\tilde a_{(n+1)(n+1)}= 1.
\end{cases}
\]
From the estimates in \cite[Theorem 6.2, p. 221]{Ei} it follows that there exists $r_0>0$ such that the power series expansion of $v(\cdot,0)$ at $0$ converges absolutely. Therefore, a complex extension $\tilde w$  of $v(\cdot, 0)$ is admissible in $B^{\mathbb C^{n+1}}_{r_0}$, and the following bound holds 
\[
\sup_{z \in B^{\mathbb C^{n+1}}_{r_0}} |\tilde w(z)|  \leq C\left( \int_{B_4 \times \{|y| < 4\} \times [0, 16)} v^2 \right)^{1/2} \leq  e^{C(\sqrt \la +1)} \left(\int_{Q_{4}} u^2 dX \right)^{1/2}.
\]
Going back to $u$, we obtain \eqref{estan1} from the latter estimate.

\end{proof}

We also need the following result from complex analysis which follows by a covering argument from \cite[Lemma 2.5]{Han}.

\begin{thrm}\label{estan2}
Suppose $f: B_1 \subset \mathbb C \to \mathbb C$  is holomorphic, with 
\[
|f(0)|=1,\ \ \ \ \ \ \ \sup_{B_1} |f| \leq 2^N.
\]
Then there exists a universal $C_1(N)>0$ such that 
\[
\#\{z \in B_{1/2}: f(z)=0\} \leq C_1N.
\]
\end{thrm}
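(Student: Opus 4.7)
The plan is to invoke Jensen's formula directly at the origin, which is the standard and quickest route to a zero-count estimate of this kind, and the hypothesis $|f(0)|=1$ is perfectly tailored for this. Fix an intermediate radius, say $r=3/4\in(1/2,1)$, and list the zeros of $f$ in the closed disc $\overline{B_r}$ (counted with multiplicity) as $z_1,\dots,z_k$. Since $|f(0)|=1\neq 0$, Jensen's formula reads
\[
\sum_{j=1}^{k}\log\frac{r}{|z_j|} \;=\; \frac{1}{2\pi}\int_0^{2\pi}\log|f(re^{i\theta})|\,d\theta \;-\;\log|f(0)|.
\]
The second term on the right vanishes by the normalization $|f(0)|=1$, while the integral on the right is bounded above by $\log\bigl(\sup_{B_1}|f|\bigr)\leq N\log 2$. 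Every term on the left is non-negative because $|z_j|<r$.

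To conclude, I would restrict the sum on the left to those zeros that actually lie in $B_{1/2}$. For such a $z_j$ one has $|z_j|\leq 1/2$, so
\[
\log\frac{r}{|z_j|} \;\geq\; \log\frac{3/4}{1/2} \;=\; \log\tfrac{3}{2},
\]
a strictly positive universal number. Dropping the remaining (still non-negative) terms and combining with the upper bound from Jensen yields
\[
\#\bigl\{z\in B_{1/2}:f(z)=0\bigr\}\cdot \log\tfrac{3}{2} \;\leq\; N\log 2,
\]
which is the desired estimate with the universal constant $C_1=\log 2/\log(3/2)$. Counting with multiplicity yields the stronger inequality, and the statement for the set of distinct zeros is an immediate consequence.

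There is essentially no technical obstacle here; the proof is a one-line application of Jensen. The only subtlety is ensuring $f(0)\neq 0$, which is guaranteed by $|f(0)|=1$. The reference to a covering argument from \cite[Lemma 2.5]{Han} in the text is presumably because Han's lemma is phrased for a slightly different geometric configuration (e.g.\ different concentric radii or with a different normalization), and one recovers the version above by covering $B_{1/2}$ by a fixed finite collection of small discs on each of which Han's lemma applies, with the constant aggregating into a single universal $C_1(N)$ proportional to $N$.
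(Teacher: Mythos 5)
Your proof is correct. Jensen's formula applied on $\overline{B_{3/4}}$ with the normalization $|f(0)|=1$ immediately gives
\[
\#\{z\in B_{1/2}: f(z)=0\}\cdot\log\tfrac{3}{2}\;\le\;\sum_{|z_j|<3/4}\log\frac{3/4}{|z_j|}\;=\;\frac{1}{2\pi}\int_0^{2\pi}\log|f(\tfrac34 e^{i\theta})|\,d\theta\;\le\;N\log 2,
\]
and every step you take is justified: $f(0)\neq 0$ since $|f(0)|=1$, all the Jensen terms are non-negative, and the count with multiplicity dominates the cardinality of the zero set. (The only point you might have flagged explicitly is that Jensen's formula remains valid even if $f$ happens to vanish on the circle $|z|=3/4$ — or one simply perturbs the radius within $(1/2,1)$ — but this is standard and does not affect the argument.)

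The paper does not actually write out a proof of this statement; it obtains it by citing \cite[Lemma 2.5]{Han} together with a covering argument, the covering being needed because Han's lemma controls the zeros of a normalized holomorphic function in a configuration of discs that does not match the radii $1/2$ and $1$ here. Your direct application of Jensen's formula at the origin bypasses both the external reference and the covering step, and yields an explicit universal constant $C_1=\log 2/\log(3/2)$. The underlying mechanism is the same in both routes (Jensen/subharmonicity of $\log|f|$), but your version is self-contained and arguably preferable for inclusion in the text.
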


We are ready to present the
 	
\begin{proof}[Proof of Theorem \ref{maindf}]
Let $\mathcal{K}$ be as in \eqref{K}, and denote by $w$ the complex extension of $u(\cdot, 0)$ in $B^{\mathbb C^n}_{r_0}$ for which \eqref{estan1} in Theorem \ref{anal} holds. From the vanishing order estimate  \eqref{df} in Theorem \ref{main}, it follows that for some $C=C(r_0)>0$ one has
\[
\int_{B_{r_0/16}} u(x, 0)^2 \geq e^{-C \mathcal{K}}.
\]
With $|u(x_0, 0)| = \underset{B_{r_0/16}} \max |u(\cdot, 0)|$, we infer that, for a different $C>0$,  
\begin{equation}\label{df0}
|u(x_0, 0)| \geq  e^{-C\mathcal{K}}.
\end{equation}
For $\theta \in \mathbb S^{n-1}$ and $z\in \mathbb C$, consider the function
\[
w_{\theta}(z)= \frac{w(x_0 +\theta z)}{u(x_0, 0)}. 
\]
It is clear that $|w_{\theta}(0)|=1$. Since $w$ is defined in $B^{\mathbb C^n}_{r_0}$ and $|x_0| \leq r_0/16$, it is clear that $w_{\theta}$ is well-defined in $\{z\in \mathbb C\mid |z| \leq r_0/2\}$. Moreover from \eqref{estan1} and the definition of $\mathcal{K}$ and $\Theta$ it follows that for $z \in B^{\mathbb C^n}_{r_0}$ one has
\[
|w(z)| \leq e^{C \mathcal{K}} \Theta^{1/2} \left(\int_{B_1} u(x, 0)^2 dx\right)^{1/2} \leq C_0 e^{C_1 \mathcal{K}}, 
\]
where $C_0=\max\{(\int_{B_1} u(x, 0)^2 dx)^{1/2},1\}$.  Combined with \eqref{df0}, this estimate implies that
\begin{equation}\label{df10}
\sup_{B_{r_0/2} \subset \mathbb C} |w_{\theta}(z)| \leq e^{C_2 \mathcal{K}}. 
\end{equation}
This shows that $w_{\theta}(z)$ verifies  the assumptions of Theorem \ref{estan2}, and from this we infer that (for a yet different $C>0$)
\[
\#\{z \in B_{r_0/4}: w_{\theta}(z) =0\} \leq C \mathcal{K}.
\]
Going back to $u(\cdot, 0)$, we deduce from this estimate
\begin{equation}\label{co5}
\#\{r: |r| \leq r_0/4, u( x_0+ r \theta, 0) =0\} \leq C \mathcal{K}.
\end{equation}		
By \eqref{co5} and an application of the coarea formula, we thus conclude
\[
H^{n-1}\{ B_{r_0/4} (x_0) \cap  \{u(\cdot, 0) =0\}\} \leq \int_{\mathbb S^{n-1}} C \mathcal{K} d\theta = C \mathcal{K}.
\]
Since $|x_0| \leq \frac{r_0}{16}$, we obviously  $B_{r_0/8 } \subset B_{r_0/4}(x_0)$, and therefore the latter inequality implies
\begin{equation}\label{co8}
H^{n-1}\{ B_{r_0/8 } \cap  \{u(\cdot, 0) =0\}\} \leq C \mathcal{K}.	\end{equation}			 		 			
Since $r_0$ is universal, the desired conclusion \eqref{df1} now follows from \eqref{co8} by a covering argument.
			
\end{proof}

\medskip
	
\section{Proofs of Theorems \ref{main2}  and \ref{main1}}\label{landis}	

In this last section we prove Theorems \ref{main2}  and \ref{main1}.
	
\begin{proof}[Proof of Theorem \ref{main2}]
Given $x_0 \in \Rn$ with $|x_0| =R$. If $R>>1$, by the rescaling 
\[
w(x,t) = u(Rx +x_0, R^2 t),
\]
we find that $w$ solves in $Q_4$
\[
w_t + \Delta w = R^2 c_0 w+ R \langle b_0, \n w \rangle.
\]
With $r=\frac{1}{R}\le 1$, we apply Proposition \ref{2sphere1}, keeping in mind that now $A=\mathbb I_n$, $b \equiv -R b_0$ and $V=- R^2 c_0$, so that
\[
||V||_{1, 1/2}^{1/2} = |c_0|^{1/2}R,\ \ \ \ \ \ \ 	||b||_{1,1/2}= |b_0| R. 
\]
We thus obtain, with some $C_1=C_1(|c_0|, |b_0|, n)>0$,
\begin{align}\label{3sq}
\int_{B_{2}}w^2(x,0)dx \le e^{C_1R\frac{\operatorname{log}_2 R}{1+N\operatorname{log}_2 R}} \left(\int_{B_r}w^2(x,0)dx\right)^{\frac{1}{1+N\operatorname{log}_2 R}}\left(N\int_{Q_{4}} w^2( x, t)dX\right)^{\frac{N\operatorname{log}_2 R}{1+N\operatorname{log}_2 R}}.\end{align}	
We now use the hypothesis \eqref{expb1} to estimate 
\[
\left(N\int_{Q_{4}} w^2( x, t)dX\right)^{\frac{N\operatorname{log}_2 R}{1+N\operatorname{log}_2 R}} \leq e^{C_2R},
\]
for some $C_2$ depending on the bound $C$ in \eqref{expb1}.
We thus obtain from \eqref{3sq} 
\[
\int_{B_{2}}w^2(x,0)dx \le e^{C_1R\frac{\operatorname{log}_2 R}{1+N\operatorname{log}_2 R}} \left(\int_{B_r}w^2(x,0)dx\right)^{\frac{1}{1+N\operatorname{log}_2 R}} e^{C_2 R}.
\]
Scaling back to $u$, and then raising the resulting inequality to the exponent $1+ N \log_2 R$, we obtain for  some  $N_1$ depending on $n, C_2, |b_0|, |c_0|$,  
\[
\int_{B_{1}(x_0)} u^2(x,0) dx \geq 	e^{-N_1R \log R} \left(\int_{B_{2R}(x_0)}  u^2(x, 0) dx\right)^{N_1\log R}.
\]
Noting that $B_1 \subset B_{2R}(x_0 )$, this inequality immediately implies 
\[
\int_{B_1(x_0)} u^2(x,0) dx \geq e^{-N_1 R \log R} \left(\int_{B_1}  u^2(x, 0) dx\right)^{N_1\log R} = e^{-\tilde CR \log R},
\]
where $\tilde C= \max \big\{N_1[1 - \log (\int_{B_1} u(x,0)^2 dx)], N_1\big\}$.	 This finishes the proof.
		
\end{proof}
		
\medskip
		
Our final objective is proving Theorem \ref{main1}. For this we need a local quantitative analogue of Proposition \ref{2sphere1} which is, in turn, a consequence of the following Lemma \ref{carleman1}. Such result  essentially follows from the work \cite{EFV}, but for completeness we outline the relevant details. In its statement, with $\s$ as in Lemma \ref{ode}, similarly to the statement of Theorem \ref{carleman} above, we indicate $G_a(x,t)=G(x,t+a)$ and $\s_a(t)=\s(t+a)$.
	
\begin{lemma}\label{carleman1}
Let $V\in L^\infty(\Rn\times [0,\infty),\mathbb C)$. There exist universal constants $N$ and $\delta \in (0,1)$ such that for $\A \ge N(1+||V||_{\infty}^{2/3})$, and
with $\lambda=\A/\delta^2,$ the following inequality holds for all $w \in C_{0}^{\infty}(B_4 \times [0,\frac{1}{4\lambda}))$ and $0<a\le \frac{1}{4\lambda}$	\begin{align}\label{carlv}
& \A^2 \int_{B_4 \times [0,\frac{1}{4\lambda}) }\s_a^{-2\A}w^2G_adX +\A \int_{B_4 \times [0,\frac{1}{4\lambda}) } \s_a^{1-2\A}|\n w|^2G_adX\\
& \le N \int_{B_4 \times [0,\frac{1}{4\lambda}) } \s_a^{1-2\A}(w_t+ \Delta w + V(x,t)w)^2 G_adX \notag\\
&+\s(a)^{-2\A}\left(-\frac{a}{N}\int |\n w (x,0)|^2G(x,a)dx + N \A \int w^2(x,0)G(x,a)dx\right).
\notag
\end{align}  
\end{lemma}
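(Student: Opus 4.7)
The plan is to deduce \eqref{carlv} from the specialisation of Theorem~\ref{carleman} to the case $A \equiv \mathbb I_n$, $b \equiv 0$, $V \equiv 0$, and then to handle the potential term $V(x,t)w$ by a simple Cauchy--Schwarz absorption argument. The threshold $\A \ge N(1 + ||V||_{\infty}^{2/3})$ will arise precisely from the homogeneity of this absorption step.

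First, applying \textbf{Step~1} of the proof of Theorem~\ref{carleman} with all lower-order coefficients set to zero, one obtains the baseline estimate for the pure heat operator: for every $\A$ larger than a universal constant,
\begin{align*}
& \A^2 \int_{B_4 \times [0,\frac{1}{4\lambda})} \s_a^{-2\A} w^2 G_a \, dX + \A \int_{B_4 \times [0,\frac{1}{4\lambda})} \s_a^{1-2\A} |\n w|^2 G_a \, dX \\
& \le N \int_{B_4 \times [0,\frac{1}{4\lambda})} \s_a^{1-2\A} (w_t + \Delta w)^2 G_a \, dX + \mathcal{R}_0,
\end{align*}
where $\mathcal{R}_0$ denotes the $t=0$ boundary terms already present on the right-hand side of \eqref{carlv}. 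Indeed, in the absence of $b$ and $V$, the integrals $\mi_4, \mi_5$ in Step~1 vanish outright, the delicate cancellation \eqref{I23} persists verbatim, and the rest of the argument goes through with no hypothesis on $\A$ beyond a fixed universal lower bound.

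Second, from the pointwise inequality $(w_t + \Delta w)^2 \le 2 (w_t + \Delta w + Vw)^2 + 2 |V|^2 w^2$ and the bound $\s_a(t) \le t+a \le 1/(2\lambda) = \delta^2/(2\A)$ on the support of $w$, one finds
\begin{align*}
N \int \s_a^{1-2\A}(w_t + \Delta w)^2 G_a \, dX & \le 2N \int \s_a^{1-2\A}(w_t + \Delta w + V w)^2 G_a \, dX \\
& \quad + \frac{N ||V||_{\infty}^2 \delta^2}{\A} \int \s_a^{-2\A} w^2 G_a \, dX.
\end{align*}
For the last term to be absorbed by $(\A^2/2) \int \s_a^{-2\A} w^2 G_a \, dX$ on the left, it suffices that $\A^3 \ge 2N \delta^2 ||V||_{\infty}^2$, which is guaranteed by $\A \ge N(1 + ||V||_{\infty}^{2/3})$ after renaming the universal constant.

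The only technical point is verifying that Step~1 of Theorem~\ref{carleman} does produce the clean baseline estimate when $b$ and $V$ are dropped; this is bookkeeping rather than a genuine obstacle, since the key cancellations exploited there do not rely on the presence of lower-order terms. The exponent $2/3$ on $||V||_{\infty}$ is dictated solely by the scaling of the absorption step above; the finer exponent $1/2$ obtained in Theorem~\ref{carleman} required the stronger $C^{1,1/2}$-control of $V$ (and the integration by parts it enables), none of which is available under the mere $L^\infty$ hypothesis assumed here.
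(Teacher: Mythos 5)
Your proposal is correct and follows essentially the same route as the paper: a clean Carleman estimate for the pure heat operator (which the paper simply quotes from \cite{EFV}, whereas you re-derive it by specialising Step~1 of Theorem~\ref{carleman} to $b\equiv V\equiv 0$ — a valid shortcut, since the term $N^{2\A}\A^{2\A}\sup\int[w^2+|\n w|^2]$ only arises from the lower-order/variable-coefficient contributions), followed by the pointwise splitting $(w_t+\Delta w)^2\le 2(w_t+\Delta w+Vw)^2+2|V|^2w^2$ and absorption of the potential term via $\s_a\lesssim 1/\la$, yielding the same condition $\A^3\gtrsim \|V\|_\infty^2$ and hence the exponent $2/3$.
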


\begin{proof}
The following Carleman estimate was proved in \cite{EFV}
\begin{align*}
& \A^2 \int_{B_4 \times [0,\frac{1}{4\lambda}) }\s_a^{-2\A}w^2G_adX +\A \int_{B_4 \times [0,\frac{1}{4\lambda}) } \s_a^{1-2\A}|\n w|^2G_adX
\\
& \le N \int_{B_4 \times [0,\frac{1}{4\lambda}) } \s_a^{1-2\A}(w_t + \Delta w)^2 G_adX 
\notag\\
&+\s(a)^{-2\A}\left(-\frac{a}{N}\int |\n w (x,0)|^2G(x,a)dx + N \A \int w^2(x,0)G(x,a)dx\right).
\notag
\end{align*}
If in this inequality we use  
\[
2(w_t + \Delta w +Vw)^2 \geq (w_t + \Delta w)^2 - 2 ||V||_{\infty}^{2} w^2,
\]
we find
\begin{align}\label{carl2}
&	\A^2 \int_{B_4 \times [0,\frac{1}{4\lambda}) }\s_a^{-2\A}w^2G_adX +\A \int_{B_4 \times [0,\frac{1}{4\lambda}) } \s_a^{1-2\A}|\n w|^2G_adX\\
& \le 2N \int_{B_4 \times [0,\frac{1}{4\lambda}) } \s_a^{1-2\A}( \Delta w + w_t +Vw)^2 G_adX +2 N \int_{B_4 \times [0,\frac{1}{4\lambda}) } \s_a^{1-2\A} ||V||_{\infty}^2 w^2 G_a dX \notag\\
&+\s(a)^{-2\A}\left(-\frac{a}{N}\int |\n w (x,0)|^2G(x,a)dx + N \A \int w^2(x,0)G(x,a)dx\right).\notag
\end{align}
Now note that the assumption $\alpha \geq 5 N^{1/3} ||V||_{\infty}^{2/3}$, implies
\[
2N \s_a^{1-2\alpha} ||V||_{\infty}^2 \leq  \frac{2N}{\alpha} \s_a^{-2\A} \frac{\alpha^3}{125 N}= \frac{2\alpha^2}{125} \s_a^{-2\A}.
\]
In this inequality we have also used that $\s_a(t) \leq t+a \leq \frac{1}{\alpha}$.
We thus infer that, when $\alpha \geq 5N^{1/2} ||V||_{\infty}^{2/3}$, the term $$2N \int_{B_4 \times [0,\frac{1}{4\lambda}) } \s_a^{1-2\A} ||V||_{\infty}^2 w^2 G_a dX$$ in \eqref{carl2} can be absorbed by the term
$$\A^2 \int_{B_4 \times [0,\frac{1}{4\lambda}) }\s_a^{-2\A}w^2G_adX$$ 
in the left hand side of \eqref{carl2}. With a new, larger $N$, the desired conclusion \eqref{carlv} thus follows.

\end{proof}

Combining Lemma \ref{carleman1} with Lemma \ref{mon} (also noting that $||V||_{\infty}^{1/2} \leq ||V||_{\infty}^{2/3}+1$), we can repeat the arguments in \cite{AB} to conclude that the following quantitative two-ball/one-cylinder inequality holds for $L^{\infty}$ potentials.

\begin{prop}\label{2sphere2}
Let $\tilde V \in L^{\infty}( \mathbb R^n \times [0, \infty),\mathbb C)$, and assume that $w$ be a solution in $Q_4$ of the equation
\begin{equation}\label{er1}
w_t + \Delta w  = \tilde V w, 
\end{equation}
with $w(\cdot,  0) \not\equiv 0$. There exists a universal $N >0$ such that, with $M_2=N (||\tilde V||_{\infty}^{2/3} +1)$, one has
\begin{align*}
\int_{B_{1}}w^2(x,0)dx \le e^{M_2\frac{\operatorname{log}_2(2/r)}{1+N\operatorname{log}_2(2/r)}} \left(\int_{B_r}w^2(x,0)dx\right)^{\frac{1}{1+N\operatorname{log}_2(2/r)}}\left(N\int_{Q_{4}} w^2( x, t)dX\right)^{\frac{N\operatorname{log}_2(2/r)}{1+N\operatorname{log}_2(2/r)}}.
\end{align*}		
\end{prop}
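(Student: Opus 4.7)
The plan is to run, for the equation \eqref{er1}, the exact same pipeline that led from Theorem \ref{carleman} and Lemma \ref{mon} to the doubling inequality \eqref{dub1} and then to Proposition \ref{2sphere1}, but feeding in the $L^{\infty}$-Carleman estimate \eqref{carlv} in place of \eqref{carl}. The crucial feature is that Lemma \ref{carleman1} only requires $\A \ge N(1+||\tilde V||_{\infty}^{2/3})$, and that Lemma \ref{mon} applied with $b\equiv 0$, $V=\tilde V$ produces the factor $e^{||\tilde V||_{\infty}^{1/2}} \le e^{||\tilde V||_{\infty}^{2/3}+1}$. Thus both inputs depend on $||\tilde V||_{\infty}$ only through $||\tilde V||_{\infty}^{2/3}$, and additively.

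First I would apply \eqref{carlv} to $W=w\psi\phi$, with $\psi\in C_0^{\infty}(B_4)$ equal to $1$ on $B_2$ and $\phi\in C_0^{\infty}([0,1/(4\lambda)))$ equal to $1$ on $[0,1/(8\lambda)]$, where $\lambda=\A/\delta^2$. The cut-off commutator terms are localised to $B_4\setminus B_2$ and to $[1/(8\lambda),1/(4\lambda)]$ respectively, and are absorbed by using Lemma \ref{mon} to lower bound $\int w^2(x,0)G(x,a)\,dx$ by a constant multiple of $\int_{B_{\rho}}w^2(x,0)\,dx$ for a universal $\rho\in(0,1)$. Choosing
\[
\A_0 = N\log\!\bigl(2N(1+||\tilde V||_{\infty})^{2}\Theta_{\rho}\bigr)+N\bigl(||\tilde V||_{\infty}^{2/3}+1\bigr),
\qquad
\Theta_{\rho}=\frac{\int_{Q_3}w^2\,dX}{\rho^2\int_{B_{\rho}}w^2(x,0)\,dx},
\]
produces a Gaussian-type inequality precisely of the form \eqref{gausineq} (with this new $\A_0$). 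Lemma \ref{do} then yields the space-like doubling
\[
\int_{B_{2r}}w^2(x,0)\,dx \le e^{N^{4}\A_0}\int_{B_r}w^2(x,0)\,dx,\qquad 0<r\le 1/2.
\]

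The passage from this doubling inequality to the interpolated form in the statement is the standard Hadamard three-balls/three-circles argument carried out in \cite{AB}. Iterating the doubling $k=\lceil\log_2(2/r)\rceil$ times gives $\int_{B_1}w^2(x,0)\,dx \le Q^{k}\int_{B_r}w^2(x,0)\,dx$ with $Q=e^{N^{4}\A_0}$; expanding
\[
Q^{k} = e^{N^{4}k(||\tilde V||_{\infty}^{2/3}+1)}\,\Theta_{\rho}^{N^{4}k}\,C^{N^{4}k},
\]
substituting the definition of $\Theta_{\rho}$, and using a covering argument exactly as in the passage \eqref{doi}$\to$\eqref{dub1} to replace the resulting negative power of $\int_{B_{\rho}}w^2(x,0)\,dx$ by a negative power of $\int_{B_r}w^2(x,0)\,dx$, one arrives at an inequality of the form
\[
I^{1+Nk} \le e^{M_2\,Nk}\,F(r)\,K^{Nk},
\]
where $I=\int_{B_1}w^2(x,0)\,dx$, $F(r)=\int_{B_r}w^2(x,0)\,dx$, $K=N\int_{Q_4}w^2\,dX$, and $M_2=N(||\tilde V||_{\infty}^{2/3}+1)$. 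Taking the $(1+Nk)$-th root and using $k\sim\log_2(2/r)$ delivers exactly the statement.

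The main obstacle is the bookkeeping that guarantees the $||\tilde V||_{\infty}^{2/3}$ dependence in $M_2$ is not degraded through the iteration. One must ensure that the terms involving $||\tilde V||_{\infty}^{2/3}$ and $\log\Theta_{\rho}$ stay additively separated inside the exponent across the $k$-fold doubling \emph{and} the covering step; otherwise one would get a spurious coupling such as $||\tilde V||_{\infty}^{2/3}\log_2(2/r)$ in the final exponent, rather than the clean factor $M_{2}\log_{2}(2/r)/(1+N\log_{2}(2/r))$ in the statement. This is precisely the sharpness gained by the threshold improvement in Lemma \ref{carleman1}.
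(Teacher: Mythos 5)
Your proposal follows essentially the same route as the paper: the paper's proof of Proposition \ref{2sphere2} consists precisely of combining Lemma \ref{carleman1} with Lemma \ref{mon} (noting $||\tilde V||_{\infty}^{1/2}\le ||\tilde V||_{\infty}^{2/3}+1$, as you do) and then repeating the Gaussian-inequality/doubling/interpolation arguments of \cite{AB} that underlie Proposition \ref{2sphere1}. Your more detailed account of how the $||\tilde V||_{\infty}^{2/3}$ dependence propagates additively through the doubling iteration and the covering step is a faithful expansion of what the paper leaves to the reader.
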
		
	
We can finally give the
	
\begin{proof}[Proof of Theorem \ref{main1}]
As before, for a given $x_0 \in \Rn$ with $|x_0| =R$, we note that the rescaled function $w$ defined by
\begin{equation}\label{res5}
w(x,t)= u(Rx +x_0, R^2t)
\end{equation}
solves
 \eqref{er1} in $Q_4$ with $\tilde V(x, t)= R^2 V(Rx + x_0, R^2 t)$. Since we obviously have 
\begin{equation*}
||\tilde V||_{\infty} \leq R^2 ||V||_\infty,
\end{equation*}
which in particular implies  
\[
||\tilde V||_{\infty}^{2/3} \leq R^{4/3} ||V||_\infty^{2/3},
\]
by an application of Proposition \ref{2sphere2} with $r=1/R$, the desired estimate \eqref{dec1} follows by repeating the arguments in  the proof of Theorem \ref{main2} above.
	
\end{proof}
	
We conclude this work with the 
	
\begin{proof}[Proof of Corollary \ref{landis1}]
	 
by rescaling, without loss of generality we assume that the hypothesis of Theorem \ref{main1} holds. Now if   $u(\cdot, 0) \not \equiv 0$ in $B_1$, then  from \eqref{dec1} it follows that  for $|x_0|=R$ and all $R$ large enough, 
\[
\int_{B_1(x_0)} u^2(x, 0) dx >> e^{-R^{4/3+\ve}}.
\]
Since this  contradicts \eqref{dec2}, we conclude that it must be $u(x, 0) \equiv 0$ for $x \in B_1$. By the space-like strong uniqueness result in \cite{EF}, \cite{EFV} it follows that $u(\cdot, 0) \equiv 0$ in $\R^n$. Applying the backward uniqueness result in \cite{Ch}, \cite{ESS}, \cite{Po}, we finally conclude that $u \equiv 0$ in $\Rn \times [0, \infty)$. 	
	
\end{proof}

\end{document}